\newtheorem{thm}{Theorem}[section]
\newtheorem{prop}[thm]{Proposition}
\newtheorem{lem}[thm]{Lemma}
\newtheorem{cor}[thm]{Corollary}
\newtheorem{claim}[thm]{Claim}
\newtheorem{ex}[thm]{Example}
\numberwithin{equation}{thm}
\def\Ext{\mathrm{Ext}}
\newcommand{\Soc}{\mathop{\mathrm{Soc}}\nolimits}
\def\mod{\mathrm{mod}}
\newcommand{\Ker}{\mathop{\mathrm{Ker}}\nolimits}
\newcommand{\im}{\mathop{\mathrm{Im}}\nolimits}
\newcommand{\Coker}{\mathop{\mathrm{Coker}}\nolimits}
\newcommand{\depth}{\mathop{\mathrm{depth}}\nolimits}
\newcommand{\Ass}{\mathop{\mathrm{Ass}}\nolimits}
\def\e{\mathrm e}
\def\m{\mathfrak m}
\def\n{\mathfrak n}
\def\k{\kappa}
\def\p{\mathfrak p}
\def\q{\mathfrak q}
\def\w{\omega}
\def\P{\mathcal P}
\def\Z{\Bbb Z}
\def\K{\mathrm{K}}
\def\H{\mathrm{H}}
\def\th{\mbox{\tiny th}}
\newcommand{\calC}{\mathcal{C}}
\newcommand{\calK}{\mathcal{K}}
\newcommand{\fka}{\mathfrak{a}}
\newcommand{\fkb}{\mathfrak{b}}
\newcommand{\fkc}{\mathfrak{c}}
\newcommand{\fkm}{\mathfrak{m}}
\newcommand{\fkq}{\mathfrak{q}}
\newcommand{\fkM}{\mathfrak{M}}
\def\Supp{\mathrm{Supp}}
\def\height{\mathrm{ht}}
\def\G{{\mathcal G}}
\def\R{{\mathcal R}}
\def\M{{\mathcal M}}
\title[Gorensteinness in Rees algebras of powers of parameter ideals]{Gorensteinness in Rees algebras of powers of parameter ideals}
\author[Shiro Goto]{Shiro Goto}
\address{Department of Mathematics, School of Science and Technology, Meiji University, 1-1-1 Higashi-mita, Tama-ku, Kawasaki 214-8571, Japan}
\email{shirogoto@gmail.com}
\author[Shin-ichiro Iai]{Shin-ichiro Iai}
\address{Mathematics laboratory, Sapporo College, Hokkaido University of Education, 1-3 Ainosato 5-3, Kita-ku, Sapporo 002-8502, Japan}
\email{iai.shinichiro@s.hokkyodai.ac.jp}
\thanks{2020 {\em Mathematics Subject Classification.} 13H10, 13A02, 13A15.}
\thanks{{\em Key words and phrases.} Gorenstein ring, Rees algebra, finite local cohomology, generalized Cohen-Macaulay ring, Buchsbaum ring, canonical module, $(S_2)$-ification}
\thanks{The first author was partially supported by JSPS Grant-in-Aid for Scientific Research (C) 21K03211. }
\begin{document}

\maketitle

\begin{abstract}
This paper gives a necessary and sufficient condition for Gorensteinness in Rees algebras of the 
$d^{\th}$ 
power of parameter ideals in certain Noetherian local rings of dimension $d\ge 2$. The main result of this paper produces many Gorenstein Rees algebras over non-Cohen-Macaulay local rings. For example, the Rees algebra $\R(\q^d)=\oplus_{i\ge 0}\q^{di}$ is Gorenstein for every parameter ideal $\q$ that is a reduction of the maximal ideal in a $d$-dimensional Buchsbaum local ring of depth 1 and multiplicity 2.
\end{abstract}

\section{Introduction}
Throughout this paper, all rings are assumed to be commutative with nonzero identity. Let $A$ be a Noetherian local ring with maximal ideal $\m$ and $d=\dim A>0$. For each ideal $I$ of $A$, we set $\R(I)=A[It]~(\subseteq A[t])$, which is called the Rees algebra of $I$, where $t$ is an indeterminate over $A$. 

The question of when there exists an ideal $I$ such that $\R(I)$ becomes a Cohen-Macaulay ring has come to fruition as Kawasaki's arithmetic Cohen-Macaulayfication (\cite{K}). 
On the other hand, when the base ring $A$ is not necessarily Cohen-Macaulay, studies on the structure of rings containing an ideal $I$ such that $\R(I)$ becomes a Gorenstein ring seem to be not still making sufficient progress.
The purpose of this paper is to  provide a necessary and sufficient condition for Gorensteinness of Rees algebras of certain ideals in non-Cohen-Macaulay local rings. Let $a_1, a_2, \dots, a_d$ be a system of parameters of $A$ and put $\fkq=(a_1, a_2, \dots, a_d)$.  
We actually investigate the Rees algebra of the $d^{\th}$ power of $\q$, that is to say, $\R(\q^d)$. 
To state our results, let us fix the following notation.
For each $A$-module $M$, let $\H^i_\m(M)$ be the $i^{\th}$ local cohomology module of $M$ with respect to $\m$ and $\ell_A (M)$ the length of $M$ as an $A$-module. We put $$\mathfrak{c}=(0):_A \H^1_\m(A),$$ which is an $\mathfrak{m}$-primary ideal whenever $0<\ell_A (\H^1_\m(A))<\infty$. For each $\mathfrak{m}$-primary ideal $I$ of $A$, we denote the multiplicity of $I$ by $\e_I(A)$. 
For each nonzero finitely generated $A$-module $M$, set $\mathrm{r}_A(M)=\ell_A (\mathrm{Ext}^{\depth_AM}_{A}(A/\mathfrak{m},M))$ which is called the type of $M$. With this notation, the main result in the paper can be stated as follows.

%\newpage

\begin{thm}\label{main}
Assume that $d\ge 2$, $\H^i_\m(A)=(0)$ for $i\neq 1,d$, and $\ell_A (\H^1_\m(A))<\infty$. Then the following three conditions are equivalent.\vspace{1mm}
\begin{enumerate}
	\item[$(1)$] The Rees algebra $\R(\fkq^d)$ is Gorenstein.
\vspace{2mm}
	\item[$(2)$] $\H^1_\m(A)\neq (0)$, $\mathrm{r}_A(\H^1_\m(A))=1$, and $\fkc=\sum_{i=1}^d(a_1,\dots, a_{i-1}, a_{i+1},\dots a_d)A:_A a_i$.
\vspace{2mm}
	\item[$(3)$] $\depth A=1$, $\mathrm{r}_A(A)=1$, $\e_\fkc (A)=2\ell_A (A/\fkc)$, and $\q$ is a reduction of $\fkc$.\vspace{1mm}
\end{enumerate}
When this is the case, the $(S_2)$-ification $\widetilde{A}$ of $A$ is a Gorenstein ring
with the equalities $\fkc=\q\widetilde{A}=A:\widetilde{A}$ and $\ell_A (\widetilde{A}/\fkc) = 2\ell_A (A/\fkc)$.
\end{thm}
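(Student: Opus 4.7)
The plan is to prove $(1)\Leftrightarrow(2)$ by an explicit graded local-cohomology computation on $\R(\q^d)$, and then $(2)\Leftrightarrow(3)$ (together with the final structural statement about $\widetilde{A}$) by translating between the cohomological and the multiplicity-theoretic pictures. As a preliminary, the standing hypotheses $\H^i_\m(A)=0$ for $i\neq 1,d$ and $\ell_A(\H^1_\m(A))<\infty$ force $\depth A\geq 1$ and make $A$ generalized Cohen--Macaulay, so the $(S_2)$-ification $\widetilde{A}$ is a finitely generated birational extension of $A$ fitting into $0\to A\to\widetilde{A}\to K\to 0$ with $K\hookrightarrow \H^1_\m(A)$; in particular $\fkc=\Ann_A\H^1_\m(A)$ annihilates $\widetilde{A}/A$ and lies in the conductor $A:\widetilde{A}$, and $\fkc$ is $\m$-primary as soon as $\depth A=1$. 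One also records that, for $\depth A=1$ with finite-length $\H^1_\m(A)$, the standard isomorphism $\Ext^1_A(A/\m,A)\cong\Hom_A(A/\m,\H^1_\m(A))$ yields $\mathrm{r}_A(A)=\mathrm{r}_A(\H^1_\m(A))$, which matches the type conditions appearing in $(2)$ and $(3)$.

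For $(1)\Leftrightarrow(2)$, I would compute the graded local cohomology $\H^i_{\mathfrak{M}}(\R(\q^d))$ at the unique graded maximal ideal $\mathfrak{M}$ of $\R(\q^d)$. Starting from the standard short exact sequences
\[
0\to\R(\q^d)_+\to\R(\q^d)\to A\to 0, \qquad 0\to\R(\q^d)_+(+1)\to\R(\q^d)\to\mathrm{gr}_{\q^d}(A)\to 0,
\]
and a Sancho de Salas type long exact sequence, the computation reduces to the $\m$-local cohomology of the graded pieces $\q^{dn}/\q^{d(n+1)}$ of the associated graded ring. The concentration of $\H^i_\m(A)$ in degrees $1$ and $d$ collapses all but a handful of contributions and identifies the Cohen--Macaulayness of $\R(\q^d)$ with the colon identity $\fkc=\sum_{i=1}^d(a_1,\dots,a_{i-1},a_{i+1},\dots,a_d)A:_A a_i$. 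Once Cohen--Macaulay, the graded canonical module $\w_{\R(\q^d)}$ admits an explicit rank-one description in which $\H^1_\m(A)$ appears in a single bidegree; freeness of $\w_{\R(\q^d)}$, equivalently Gorensteinness of $\R(\q^d)$, is then equivalent to $\H^1_\m(A)\neq 0$ together with $\mathrm{r}_A(\H^1_\m(A))=1$. This yields $(1)\Leftrightarrow(2)$.

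For $(2)\Leftrightarrow(3)$, the type conditions already match by the preliminary observation, so the work reduces to showing that the colon identity of $(2)$ is equivalent to the conjunction of $\e_\fkc(A)=2\ell_A(A/\fkc)$ with $\q$ being a reduction of $\fkc$. Assuming $(2)$, the colon identity combined with the generalized Cohen--Macaulay multiplicity formula $\ell_A(A/\q)-\e_\q(A)=(d-1)\ell_A(\H^1_\m(A))$ and the inclusion $\fkc\subseteq A:\widetilde{A}$ forces $\q\widetilde{A}=\fkc$, so $\q$ is a reduction of $\fkc$, and a length count across $0\to A/\q\to\widetilde{A}/\q\widetilde{A}\to\widetilde{A}/A\to 0$ gives $\e_\fkc(A)=\e_\q(A)=2\ell_A(A/\fkc)$. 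Conversely, under $(3)$ the equality $\e_\fkc(A)=2\ell_A(A/\fkc)$ together with the reduction property recognises $\fkc$ as the conductor of a Gorenstein degree-$2$ birational extension of $A$, which must coincide with $\widetilde{A}$; reading this back through $0\to A\to\widetilde{A}\to\widetilde{A}/A\to 0$ produces the colon identity of $(2)$ as well as the equalities $\fkc=\q\widetilde{A}=A:\widetilde{A}$, $\ell_A(\widetilde{A}/\fkc)=2\ell_A(A/\fkc)$ and the Gorensteinness of $\widetilde{A}$, proving both $(2)\Leftrightarrow(3)$ and the final structural claim.

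The main obstacle is the first step: controlling $\H^i_{\mathfrak{M}}(\R(\q^d))$ precisely enough to isolate both the Cohen--Macaulayness and the type-$1$ condition on $\H^1_\m(A)$ from a single graded canonical-module computation. The technical heart is the bookkeeping on how $\H^1_\m(A)$ contributes across graded degrees of $\w_{\R(\q^d)}$, and this is exactly why the $d^{\th}$ power of $\q$ is the right choice: a smaller exponent would leave residual Koszul-cohomology contributions in intermediate bidegrees that would obstruct both Cohen--Macaulayness and Gorensteinness.
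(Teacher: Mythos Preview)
Your outline has a genuine gap in the $(1)\Leftrightarrow(2)$ direction, and a factual error that obscures where the difficulty lies. You assert that the Sancho de Salas computation ``identifies the Cohen--Macaulayness of $\R(\q^d)$ with the colon identity $\fkc=\sum_i(a_1,\dots,\widehat{a_i},\dots,a_d):a_i$.'' This is false: under the standing hypotheses, $\R(\q^d)$ is Cohen--Macaulay as soon as $\q$ is a standard parameter ideal (Trung), which here amounts only to $\q\subseteq\fkc$; the colon identity $\fkc=\fka=\q\widetilde{A}$ is strictly stronger and says that $\q$ is a reduction of $\fkc$. So the colon condition is not the obstruction to Cohen--Macaulayness but part of the obstruction to \emph{Gorensteinness}, and your proposed decomposition ``CM $\leftrightarrow$ colon identity, then type one on $\H^1_\m(A)$ $\leftrightarrow$ Gorenstein'' does not match reality.

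The substantive gap is the implication $(2)\Rightarrow(1)$. Your sentence ``the graded canonical module $\w_{\R(\q^d)}$ admits an explicit rank-one description in which $\H^1_\m(A)$ appears in a single bidegree; freeness is then equivalent to $\H^1_\m(A)\neq 0$ together with $\mathrm{r}_A(\H^1_\m(A))=1$'' is precisely the statement that needs proof, not an input. The paper spends Sections~3 and~4 building the machinery of canonical $\q$-filtrations to make this precise: the criterion becomes $\R(\q^d)$ Gorenstein $\Leftrightarrow$ $\mu_A(\k_d)=1$ for the canonical filtration $\k$ of $\K_A\cong\widetilde{A}$, and the comparison $\k_d/\w_d\cong[\widetilde{A}/A]^\vee$ between the canonical filtrations over $A$ and over $\widetilde{A}$ is what links $\k_d$ to $\H^1_\m(A)$. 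Even with all of that, the paper still needs an induction on $d$ (passing to $A'=A/a_1\widetilde{A}$) to show that condition~$(2)$ forces $\k_d$ to be principal; the base case $d=2$ already requires a nontrivial diagram chase to exhibit the generator as a unit of $\widetilde{A}$. Your sketch provides no mechanism for producing a single generator of $[\K_{\R(\q^d)}]_1$ from the hypotheses in $(2)$, so the sufficiency direction is missing. (A minor slip in your $(2)\Leftrightarrow(3)$ argument: the short exact sequence should read $0\to A/\q\widetilde{A}\to\widetilde{A}/\q\widetilde{A}\to\widetilde{A}/A\to 0$, not $A/\q$ on the left, since $A\cap\q\widetilde{A}=\q\widetilde{A}$ once $\q\subseteq\fkc=A:\widetilde{A}$.)
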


The $(S_2)$-ification $\widetilde{A}$ of $A$ is defined as the smallest module-finite birational extension of $A$ satisfying the condition $(S_2)$ of Serre. The colon $$A:\widetilde{A}=\{f\in\mathrm{Q}(A) \mid f\widetilde{A}\subseteq A\}$$ is considered inside the total ring $\mathrm{Q}(A)$ of fractions of $A$. The assumption $\H^i_\m(A)=(0)$ for $i\neq 1, d$ is equivalent to saying that the $(S_2)$-ification $\widetilde{A}$ of $A$ is a Cohen-Macaulay ring when $\ell_A (\H^i_\m(A))<\infty$ for $i\neq d$ (cf. \cite{G80}). Thanks to \cite[Lemma 3.1]{GNa}, the assumption $\ell_A (\H^1_\m(A))<\infty$ is naturally satisfied when the base ring $A$ is unmixed $($i.e., $\dim \widehat{A}=\dim \widehat{A}/P$ for all $P\in\Ass\widehat{A}$, where $\widehat{A}$ denotes the $\m$-adic completion$)$.

The origin of a study of $\R(\q^d)$ comes from an example of Hochster and Roberts in \cite{HR} as follows. Put $A=k[[x^2,y,x^3,xy]]$ that is a subring of the formal power series ring $k[[x,y]]$ over a field $k$ in two variables $x$ and $y$. We set $\fkq=(x^2, y)A$. 
Then they showed that the Rees algebra $\R(\q^2)$ is Gorenstein but the base ring $k[[x^2,y,x^3,xy]]$ is not Cohen-Macaulay. It is one of the most important examples of Rees algebras and has provided the impetus for large amount of research. It is not only an example of a non-Cohen-Macaulay ring that is a direct summand of a Gorenstein ring, but also an example of an arithmetic Gorensteinification. 

As far as we know, the first result of Gorensteinness of $\R(\fkq^d)$ was reported by Yasuhiro Shimoda around 30 years ago in his seminar talk at Meiji University as follows.

\begin{thm}[Shimoda]
Assume that $\dim A= 2$. Let $\q=(a, b)$ be a parameter ideal of $A$. Then the following two conditions are equivalent.\vspace{1mm}
\begin{enumerate}
	\item[$(1)$] The Rees algebra $\R(\fkq^2)$ is Gorenstein.
\vspace{2mm}
	\item[$(2)$] 
\begin{enumerate}
\item[$\mathrm{(i)}$] Both $a$ and $b$ are non-zerodivisors on $A$.
\item[$\mathrm{(ii)}$] The equality $(aA:_Ab)\cap (bA:_Aa)=aA\cap bA$ holds.
\item[$\mathrm{(iii)}$] The ring $A/abA+a(aA:_Ab)+b(bA:_Aa)$ is Gorenstein.
\end{enumerate}
\end{enumerate}
\end{thm}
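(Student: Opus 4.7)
The plan is to derive Shimoda's theorem as the $d = 2$ case of Theorem \ref{main}, with the technical work lying in translating between the two formulations of the equivalent conditions. Assume (1), so $\R(\q^2)$ is Gorenstein, hence Cohen-Macaulay of dimension $3$. Standard results on Rees algebras of $\m$-primary ideals imply $\depth A \geq 1$, so $a$ and $b$ are non-zerodivisors on $A$ (giving (i)), and also imply $\ell_A(\H^1_\m(A)) < \infty$. Combined with $\dim A = 2$ this places us in the hypothesis of Theorem \ref{main}, whose application yields $\H^1_\m(A) \neq (0)$, $\mathrm{r}_A(\H^1_\m(A)) = 1$, $\fkc = (aA:_A b) + (bA:_A a)$, and a Gorenstein $(S_2)$-ification $\widetilde{A}$ satisfying $\fkc = \q\widetilde{A} = A : \widetilde{A}$ together with $\ell_A(\widetilde{A}/\fkc) = 2\ell_A(A/\fkc)$.

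I would then extract Shimoda's (ii) and (iii) from this data. For (ii): since $a, b$ is a regular sequence on $\widetilde{A}$, we have $a\widetilde{A} \cap b\widetilde{A} = ab\widetilde{A}$, and the regularity of $b$ modulo $a\widetilde{A}$ lets me deduce from $ub \in a\widetilde{A}$ and $ua \in b\widetilde{A}$ that $u \in ab\widetilde{A} \cap A$; using $u \in \fkc = A : \widetilde{A}$ then sharpens this to $u \in aA \cap bA$, yielding (ii). For (iii): the ideal $J := abA + a(aA:_A b) + b(bA:_A a)$ should be identified, via the key relation $(aA:_A b) + (bA:_A a) = \fkc = \q\widetilde{A}$, with $\q^2\widetilde{A} \cap A$. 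A length count using $\ell_A(\widetilde{A}/\fkc) = 2\ell_A(A/\fkc)$ then forces $A/J \cong \widetilde{A}/\q^2\widetilde{A}$, which is Gorenstein since $\widetilde{A}$ is Gorenstein of dimension $2$ with $a, b$ a system of parameters on it, making the quotient an artinian Gorenstein ring.

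For the converse (2) $\Rightarrow$ (1), I would construct $\widetilde{A}$ directly as $A + a^{-1}(aA:_A b) \subseteq \mathrm{Q}(A)$, well-defined by (i); condition (ii) gives the symmetric description $\widetilde{A} = A + b^{-1}(bA:_A a)$, making $\widetilde{A}$ a module-finite birational extension of $A$, and (iii) transfers Gorensteinness to $\widetilde{A}$ via the same identification $A/J \cong \widetilde{A}/\q^2\widetilde{A}$ read in reverse. Verifying the remaining conditions of Theorem \ref{main}(2) at $d = 2$ is then a routine check, yielding (1). The main obstacle is the identification $A/J \cong \widetilde{A}/\q^2\widetilde{A}$: the ideal $J$ is built from the colon ideals $I_a = aA:_A b$ and $I_b = bA:_A a$, whose sum equals $\fkc = \q\widetilde{A}$ but whose individual values do not, so cleanly expressing $aI_a + bI_b$ in terms of $\widetilde{A}$-module arithmetic requires careful bookkeeping of the short exact sequence $0 \to A \to \widetilde{A} \to \widetilde{A}/A \to 0$ and of the action of $a$ and $b$ on the quotient $\widetilde{A}/A$.
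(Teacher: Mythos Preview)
The paper does not actually prove Shimoda's theorem: it is quoted as an unpublished result from a seminar talk, and the paper's contribution is Theorem~\ref{main}, advertised as a ``complete generalization.'' No derivation of Shimoda's statement from Theorem~\ref{main} appears in the text, so there is no proof to compare against. What can be assessed is whether your reduction to Theorem~\ref{main} is sound.

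There is a genuine gap. Theorem~\ref{main} carries the standing hypotheses $\H^i_\m(A)=(0)$ for $i\neq 1,d$ and $\ell_A(\H^1_\m(A))<\infty$; for $d=2$ this means $\depth A\ge 1$ and $\H^1_\m(A)$ of finite length. Shimoda's theorem has no such hypotheses, so both must be extracted from his condition (1) and from his condition (2) before Theorem~\ref{main} can be invoked. You wave at ``standard results on Rees algebras of $\m$-primary ideals'' for the finiteness of $\H^1_\m(A)$, but no such result is cited or proved in the paper, and it is not clear which one you have in mind. On the implication side, the sentence ``$\depth A\ge 1$, so $a$ and $b$ are non-zerodivisors on $A$'' is simply false: positive depth guarantees only the existence of \emph{some} non-zerodivisor, not that every parameter is one (take $A=k[[x,y,z]]/(xz,yz)$ with the system of parameters $x,\,y+z$). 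What forces every parameter to be regular is unmixedness of $A$, which in the paper is obtained \emph{from} the finiteness hypotheses on local cohomology---exactly the hypotheses you have not yet established. So the argument for $(1)\Rightarrow(2)$ is circular as written.

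For $(2)\Rightarrow(1)$ the same issue recurs: you must show that Shimoda's (i)--(iii) alone force $\ell_A(\H^1_\m(A))<\infty$ before Theorem~\ref{main} is available. Your construction $\widetilde{A}=A+a^{-1}(aA:_Ab)$ does give a module-finite extension with $\q\cdot\widetilde{A}/A=(0)$, hence $\ell_A(\widetilde{A}/A)<\infty$; but you still need $\widetilde{A}/A\cong\H^1_\m(A)$, which in Section~2 is proved under the assumption that $a,b\in\fkc$, i.e.\ already knowing that $(a,b)\cdot\H^1_\m(A)=(0)$. Finally, the key identification $A/J\cong\widetilde{A}/\q^2\widetilde{A}$ deserves a real argument: using $aA:_Ab=a\widetilde{A}$ and $bA:_Aa=b\widetilde{A}$ one gets $J=abA+a^2\widetilde{A}+b^2\widetilde{A}\subseteq\q^2\widetilde{A}$, but the reverse inclusion $ab\widetilde{A}\subseteq J$ is not automatic and is precisely where condition~(ii) must enter; you acknowledge this is ``the main obstacle'' but do not indicate how to resolve it.
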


The present paper is motivated his argument and gives a complete generalization of his theorem together with many other results.

We notice that the Gorensteinness of $\R(\q^d)$ implies that the ideal $\q$ is generated by a {\it standard} system of parameters (cf. \cite[p.261]{SV} for the definition) under the assumption in Theorem \ref{main} (see Lemma \ref{standard} in the present paper). Furthermore,
Herrmann, Hyry, and Korb proved that the base ring $A$ must be Gorenstein whenever $\depth A\ge 2$ and $\R(\q^n)$ is a Gorenstein ring for some positive integer $n$ and for some standard parameter ideal $\q$ (\cite[Corollary 4.7]{HHK}). When $A$ is a Gorenstein local ring with $\dim A\ge 2$, the characterization of Gorensteinness of $\R(\q^n)$ is already known (cf. \cite[Theorem (1.2)]{GS2}, \cite[Lemma 2.4]{Hoa}, and \cite[Theorem 4.1]{O}). Thus we focus on the case where $\depth A=1$ in the present paper. 

We will show that $\R(\q^n)$ is not a Gorenstein ring for any positive integer $n\neq d$ whenever $d\ge 2$, $\depth A=1$, and $\q$ is a standard parameter ideal (see Theorem \ref{n=d}). This is the reason why we consider the $d^{\th}$ power of $\q$.

For Theorem \ref{main}, we are interested in the Gorensteinness of $\R(\q^d)$ deeply influencing properties of the base ring $A$, namely, $\depth A=1$, $\mathrm{r}_A(A)=1$, and $\e_\fkc (A) = 2\ell_A (A/\fkc)$. We shall show that these equalities imply that the $(S_2)$-ification $\widetilde{A}$ of $A$ is a Gorenstein ring (see Theorem \ref{gor}). 

\if0
Theorem \ref{main} means that $\R(Q^d)$ is a Gorenstein ring for any parameter ideal $Q$ that is a reduction of $\fkc$ in the base ring $A$ having the equalities under the assumptions $d\ge 2$, $\H^i_\m(A)=(0)$ for $i\neq 1,d$, and $\ell_A (\H^1_\m(A))<\infty$. 

\fi

As a consequence of Theorem \ref{main}, we have the following explicit result when $A$ is a Buchsbaum local ring.

\begin{cor}\label{Bbm}
Assume that $d\ge 2$. Let $A$ be a Buchsbaum local ring of depth one. Then the Rees algebra $\R(\fkq^d)$ is Gorenstein if and only if $\e_\m (A)=2$ and $\q$ is a reduction of $\m$.
When this is the case, one has $\H^i_\m(A)=(0)$ if $i\neq 1,d$.
\end{cor}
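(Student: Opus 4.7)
The strategy is to reduce directly to Theorem \ref{main}; the Buchsbaum hypothesis drastically simplifies the data there. Since $A$ is Buchsbaum with $\depth A=1$ and $d\ge 2$, the module $\H^1_\m(A)$ is nonzero, has finite length, and is annihilated by $\m$. Consequently $\fkc=(0):_A\H^1_\m(A)=\m$, so $\ell_A(A/\fkc)=1$, and in condition (3) of Theorem \ref{main} the equality $\e_\fkc(A)=2\ell_A(A/\fkc)$ becomes simply $\e_\m(A)=2$, while ``$\q$ is a reduction of $\fkc$'' becomes ``$\q$ is a reduction of $\m$''.

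To actually invoke Theorem \ref{main}, the two missing pieces are (a) the vanishing $\H^i_\m(A)=(0)$ for $2\le i\le d-1$ and (b) the type condition $\mathrm{r}_A(A)=1$. For (a), the St\"uckrad-Vogel identity
$$
\ell_A(A/\q')-\e_{\q'}(A)=\sum_{i=0}^{d-1}\binom{d-1}{i}\ell_A(\H^i_\m(A))
$$
holds for every parameter ideal $\q'$ in the Buchsbaum ring $A$; combined with $\e_\m(A)=2$ I aim to force the $(S_2)$-ification $\widetilde{A}$ to be Cohen-Macaulay, and then apply the long exact sequence of $\H^\bullet_\m(-)$ to $0\to A\to \widetilde{A}\to \widetilde{A}/A\to 0$ with $\widetilde{A}/A$ of finite length to conclude $\H^i_\m(A)\cong\H^i_\m(\widetilde{A})=(0)$ for $2\le i\le d-1$. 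For (b), once $\widetilde{A}$ is Cohen-Macaulay with $\depth_A\widetilde{A}=d\ge 2$, a length count using $\m\widetilde{A}\cap A=\m$, the multiplicity hypothesis, and the finite length of $\widetilde{A}/A$ shows that $\widetilde{A}/A$ is a one-dimensional $k$-vector space; applying $\Hom_A(A/\m,-)$ to the same short exact sequence then yields $\Ext^1_A(A/\m,A)\cong \widetilde{A}/A\cong k$, so $\mathrm{r}_A(A)=1$.

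With (a) and (b) in place, the hypotheses of Theorem \ref{main} are satisfied and condition (3) there is exactly ``$\e_\m(A)=2$ and $\q$ is a reduction of $\m$'', whence the equivalence of the corollary follows, together with its closing sentence $\H^i_\m(A)=(0)$ for $i\neq 1,d$. The converse direction runs in parallel: Gorensteinness of $\R(\q^d)$ triggers Lemma \ref{standard}, so $\q$ is a standard parameter ideal and the Buchsbaum property promotes this to the local cohomology vanishing required to invoke Theorem \ref{main}. The principal obstacle is step (a): demonstrating that a Buchsbaum local ring of depth one and multiplicity two necessarily has Cohen-Macaulay $(S_2)$-ification, so that its only nonvanishing intermediate local cohomology is $\H^1_\m(A)$. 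This is a rigidity statement for Buchsbaum rings at minimal multiplicity and rests on a careful numerical analysis through the Buchsbaum length formula together with the minimality of $\e_\m(A)=2$ among positive-dimensional local rings.
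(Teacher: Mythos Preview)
Your reduction to Theorem \ref{main} via $\fkc=\m$ is correct and is the paper's endgame too, but the crucial step---establishing $\H^i_\m(A)=(0)$ for $2\le i\le d-1$ so that Theorem \ref{main} applies---is not carried out in either direction. For the ``if'' direction, the St\"uckrad--Vogel identity $\ell_A(A/\q)-\e_\q(A)=\sum_i\binom{d-1}{i}\ell_A(\H^i_\m(A))$ does not by itself force the vanishing: you know $\e_\q(A)=2$, but you have no independent control on $\ell_A(A/\q)$, so the right-hand side is unconstrained. For the ``only if'' direction your argument is circular: Lemma \ref{standard} has the vanishing $\H^i_\m(A)=(0)$ for $i\neq 1,d$ as a \emph{hypothesis}, not a conclusion, and its conclusion (that $\q$ is standard) is automatic for a Buchsbaum ring anyway.

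The paper closes this gap with a finer decomposition. Set $\fkb=(a_1,\dots,a_{d-1}):_Aa_d+a_dA$ and $\fka=\widetilde{\Sigma}(a_1,\dots,a_d;A)$; one has $\e_\q(A)=\ell_A(A/\fkb)$ and
\[
\ell_A(\fka/\fkb)=\sum_{i=1}^{d-1}\binom{d-1}{i-1}\ell_A(\H^i_\m(A)),
\]
both from \cite{G83,G83A}. In the ``if'' direction, $\e_\q(A)=2$ together with $\ell_A(A/\fka)\ge 1$ and $\ell_A(\fka/\fkb)\ge\ell_A(\H^1_\m(A))\ge 1$ forces $\ell_A(\fka/\fkb)=1$, hence $\ell_A(\H^1_\m(A))=1$ and all higher intermediate cohomology vanishes. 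In the ``only if'' direction, the paper invokes \cite[Lemma (3.6)]{G83}: Gorensteinness of $\R(\q^d)$ makes $A/\fkb$ a Gorenstein Artinian ring; since $A$ is Buchsbaum one has $\m\fka\subseteq\fkb$, so $\fka/\fkb$ sits in the (one-dimensional) socle of $A/\fkb$, giving $\ell_A(\fka/\fkb)=1$ and hence the same vanishing. Once the vanishing is in hand, your argument for $\mathrm{r}_A(A)=1$ via $\ell_A(\widetilde{A}/\q\widetilde{A})=2$ and $\widetilde{A}/A\cong k$ does work, and Theorem \ref{main} finishes the job exactly as you outline.
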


Let us explain how to organize this paper.
The proofs of Theorem \ref{main} and Corollary \ref{Bbm} will be given in Section 5.
There are two notions which will play essential roles in investigating the Gorenstein property of $\R(\q^d)$ in this paper. 
One is the $(S_2)$-ification $\widetilde{A}$ obtained by an unconditioned strong $d$-sequence due to the first author and Yamagishi in \cite{GY}. It was essentially first investigated in \cite{GS} and was recognized as the module-finite Cohen-Macaulayfication under the suitable assumptions in \cite{G80}. Section 2 shall be devoted to studying it. The other is a filtration describing the canonical module of the Rees algebra that was originally introduced by Herzog, Simis, and Vasconcelos in \cite{HSV}. It has been fruitfully developed by several authors (see, e.g., \cite{Z,HHR,TVZ,U,GI,H}, in which \cite{Z} and \cite{TVZ} are typical references given by Zarzuela and by Trung, Vi\^et, and Zarzuela).
Let us refer to such a filtration as a canonical filtration (cf. \cite{GI}). In order to use it as a tool to prove Theorem \ref{main}, we will build structures in which two canonical filtrations with respect to $A$ and $\widetilde{A}$ intermingle. We shall discuss canonical filtrations of modules in Section 3.
Section 4 summarizes some preliminary results for the Gorensteinness of $\R(\q^d)$ to prove Theorem \ref{main}.

%%%%%%%%%%%%%%%%%%%%%%%%%%%%%%%
%%%%%%%%%%%%%%%%%%%%%%%%%%%%%%%
\section{The $(S_2)$-ification obtained by a sequence}
%%%%%%%%%%%%%%%%%%%%%%%%%%%%%%%
%%%%%%%%%%%%%%%%%%%%%%%%%%%%%%%
Let $A$ be a commutative ring and $M$ a nonzero $A$-module. We denote by $W$ the set of all non-zerodivisors on $M$. The localization of $M$ with respect to $W$ is denoted by $W^{-1}M$. The natural map $M\to W^{-1}M$ is injective, so that it allows us to view $M$ as an $A$-submodule of $W^{-1}M$. Fix a sequence $a, b\in A$ such that $a\in W$ and set
$$
\widetilde{M}=\{m/a\in W^{-1}M\mid m\in aM:_M b\}
$$ 
which is an $A$-submodule of $W^{-1}M$ containing $M$ with the equality $(a,b)\cdot\widetilde{M}/M=(0)$. 
The $A$-module $\widetilde{M}$ with the inclusion map $i:M\hookrightarrow \widetilde{M}$ 
has the following property given by \cite[Lemma (5.5)]{GY}.

\begin{lem}[\cite{GY}]\label{universal property}
Let $f: M \to X$ be a monomorphism of $A$-modules such that $(a,b)\cdot\Coker f=(0)$. Then there exists a unique homomorphism $g:X\rightarrow \widetilde{M}$ of $A$-modules such that $g\circ f=i$. Furthermore the following two assertions hold.
\begin{enumerate}
\item The homomorphism $g$ is injective if and only if $(0):_Xa=(0)$.
\item The homomorphism $g$ is bijective if $(0):_Xa=(0)$ and $aX:_Xb=aX$.
\end{enumerate}
\end{lem}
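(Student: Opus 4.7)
The plan is to build the homomorphism $g$ explicitly from the universal recipe that the hypothesis forces. For any $x\in X$, the condition $a\cdot\Coker f=(0)$ gives $ax\in f(M)$, and since $f$ is injective there is a unique $m_x\in M$ with $f(m_x)=ax$; I would define $g(x)=m_x/a\in W^{-1}M$. The one thing to check at this stage is that $g(x)$ actually lies in $\widetilde{M}$, i.e. that $m_x\in aM:_M b$. For this, use that $b\cdot\Coker f=(0)$ to write $bx=f(n)$ for some $n\in M$; then $f(bm_x)=b\cdot f(m_x)=a\cdot f(n)=f(an)$, and injectivity of $f$ yields $bm_x=an\in aM$, as required. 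Additivity and $A$-linearity of $g$ follow at once from the uniqueness of $m_x$, and the identity $g\circ f=i$ is immediate: for $y\in M$ one has $a\cdot f(y)=f(ay)$, so $m_{f(y)}=ay$ and $g(f(y))=ay/a=y$.

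For uniqueness of $g$, the key observation is that $a$ acts as a non-zerodivisor on $W^{-1}M$ (since $a\in W$), and therefore on every submodule including $\widetilde{M}$. Hence if $g'\colon X\to\widetilde{M}$ also satisfies $g'\circ f=i$, then $a\cdot g'(x)=g'(ax)=g'(f(m_x))=m_x=a\cdot g(x)$ forces $g'(x)=g(x)$ inside $\widetilde{M}$.

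For assertion (1), both directions are short. If $(0):_X a=(0)$ and $g(x)=0$, then $m_x/a=0$ in $W^{-1}M$ forces $m_x=0$, whence $ax=f(m_x)=0$ and so $x=0$. Conversely, if $g$ is injective and $ax=0$, then $m_x=0$ by uniqueness, so $g(x)=0$ and $x=0$. For assertion (2), assume additionally $aX:_X b=aX$ and take an arbitrary element $m/a\in\widetilde{M}$ with $m\in aM:_M b$. To produce a preimage I would argue that $f(m)\in aX$: indeed, $b\cdot f(m)=f(bm)\in f(aM)\subseteq aX$, so the assumption $aX:_X b=aX$ gives $x\in X$ with $f(m)=ax$, and then $m_x=m$ by uniqueness, so $g(x)=m/a$.

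I do not foresee a real obstacle here; the proof is essentially formal once the candidate $g(x)=m_x/a$ is written down. The only point that requires a calculation rather than bookkeeping is the verification that $m_x\in aM:_M b$, which is where both hypotheses on $\Coker f$ are used simultaneously, and this is the step where I would be most careful to invoke the injectivity of $f$ cleanly.
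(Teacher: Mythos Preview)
Your proof is correct and follows essentially the same approach as the paper's: define $g(x)=m_x/a$ from the unique $m_x$ with $f(m_x)=ax$, verify $m_x\in aM:_Mb$ via the second annihilation hypothesis, then handle uniqueness and the two assertions exactly as you do. The only cosmetic difference is that the paper establishes uniqueness before existence and leaves assertion~(1) as routine, whereas you spell out both directions; the arguments are otherwise identical.
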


\begin{proof}
Take any element $x\in X$. 
Since $a\cdot\Coker f=(0)$, we have $a\cdot X\subseteq f(M)$, so that there exists a unique element $m\in M$ such that $ax=f(m)$, as $f$ is injective. 

If a homomorphism $g:X\to \widetilde{M}$ of $A$-modules satisfies the equality $g\circ f=i$, then $$ag(x)=g(ax)=g(f(m))=m,$$ so that $g(x)=m/a$. Hence, $g$ is uniquely determined.

For the existence, it is enough to show $m\in aM:_M b$. Since $b\cdot\Coker f=(0)$, there is an element $m'\in M$ such that $bx=f(m')$, so that we get the equalities $$f(am')=abx=bax=f(bm),$$ and hence $bm=am'\in aM$. Therefore, $m\in aM:_M b$. Thus we can find the $A$-homomorphism $g:X\to \widetilde{M}$ such that $g\circ f=i$.

It is routine to check the assertion 1. In order to prove the assertion 2, it is enough to show $g$ is surjective. Take any element $\alpha\in\widetilde{M}$. We write $\alpha=m_1/a$ for some $m_1\in aM:_M b$. Then $bm_1=am_2$ for some $m_2\in M$, so that $bf(m_1)=af(m_2)$, and hence $f(m_1)\in aX:_Xb=aX$.
%, as the sequence $a,b$ is a weak $X$-sequence. 
We write $f(m_1)=ax'$ for some $x'\in X$, and then $g(x')=m_1/a=\alpha$. Thus $g$ is surjective.
\end{proof}

\

In the rest of this section, let $(A,\m)$ be a Noetherian local ring. Let $M$ be a finitely generated $A$-module, $\dim_A M\ge 2$, and $\depth_A M>0$. Suppose that $\H_\m^1(M)$ is a finitely generated $A$-module and set 
$$\mathfrak{c}=(0):_A \H^1_\m(M).$$ 
We assume that the sequence $a,b$ belongs to $\fkc$ and forms a filter regular sequence on $M$ (cf. \cite[p.252]{SV} for the definition).
Notice that such a sequence $a,b$ always exists, since $\H_\m^1(M)$ is a finitely generated $A$-module.

We shall show the sequence $a,b$ forms a regular sequence on $\widetilde{M}$. We obtain $$\H_\m^1(M)\cong \H_\m^0(M/aM)$$ as an $A$-module from applying the local cohomology functor $\H_\m^i(\ast )$ to the exact sequence
$
0\to M\xrightarrow{a} M\to M/aM\to 0
$
of $A$-modules induced by multiplication by the element $a$ on $M$ (recall that $a\cdot\H^1_\m(M)=(0)$). Since
$$
\H_\m^0(M/aM)=\left[\cup_{n>0}\ aM:_M \m^n\right]/aM
$$
and $b\cdot\H_\m^0(M/aM)=(0)$, we have 
$
\bigcup_{n>0}\ aM:_M \m^n\subseteq aM:_Mb
,$ 
and therefore $\bigcup_{n>0}\ aM:_M \m^n= aM:_Mb$ because the sequence $a,b$ forms a filter regular sequence on $M$. Hence, $$\H_\m^0(M/aM)=aM:_M b /aM.$$

By the definition of $\widetilde{M}$, the equality $aM:_M b=a\widetilde{M}$ holds, so that we see 
$$
aM:_M b /aM\cong \widetilde{M}/M
$$ 
as an $A$-module (recall that $a$ is a regular element on the both modules $M$ and $\widetilde{M}$). Thus we get 
$$
\H_\m^1(M)\cong\widetilde{M}/M
$$ 
as an $A$-module. Furthermore, we have the following.

\begin{prop}\label{G80}
$\depth_A \widetilde{M}\ge 2$ and $\H_\m^i(M)\cong\H_\m^i(\widetilde{M})$ as an $A$-module for all $i\ge 2$.
\end{prop}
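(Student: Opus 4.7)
The plan is to exploit the short exact sequence
$$0 \to M \to \widetilde{M} \to \widetilde{M}/M \to 0,$$
noting that the discussion preceding the statement already identifies $\widetilde{M}/M$ with $\H^1_\m(M)$ as an $A$-module. Because $\H^1_\m(M)$ is assumed finitely generated and is $\m$-torsion, it has finite length, and hence so does $\widetilde{M}/M$. Consequently $\H^0_\m(\widetilde{M}/M) = \widetilde{M}/M$ while $\H^i_\m(\widetilde{M}/M) = (0)$ for all $i \ge 1$.

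Applying the local cohomology functor to the short exact sequence above and using the vanishings just noted, I immediately obtain $\H^i_\m(M) \cong \H^i_\m(\widetilde{M})$ for every $i \ge 2$, which settles the second assertion.

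For the depth bound, first observe that $a$ lies in the set $W$ of non-zerodivisors on $M$ and therefore remains a non-zerodivisor on the submodule $\widetilde{M}$ of $W^{-1}M$; in particular $\H^0_\m(\widetilde{M}) = (0)$. The low-degree part of the long exact sequence of local cohomology then reduces to
$$0 \to \widetilde{M}/M \xrightarrow{\delta} \H^1_\m(M) \to \H^1_\m(\widetilde{M}) \to 0.$$
Since $\widetilde{M}/M$ and $\H^1_\m(M)$ are isomorphic as $A$-modules, they share the same finite length, so the injective map $\delta$ between them is forced to be an isomorphism. This yields $\H^1_\m(\widetilde{M}) = (0)$, and hence $\depth_A \widetilde{M} \ge 2$.

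The only non-routine point is the injectivity of $\delta$, which follows at once from $\H^0_\m(\widetilde{M}) = (0)$; once that is in hand, the length comparison closes the argument without having to identify $\delta$ explicitly with the isomorphism $\widetilde{M}/M \cong \H^1_\m(M)$ established before the statement.
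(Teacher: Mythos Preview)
Your proof is correct and follows essentially the same route as the paper: both apply local cohomology to the short exact sequence $0 \to M \to \widetilde{M} \to \widetilde{M}/M \to 0$, use that $\widetilde{M}/M$ has finite length to obtain the isomorphisms for $i \ge 2$, and then conclude $\H^1_\m(\widetilde{M}) = (0)$ from the equality $\ell_A(\widetilde{M}/M) = \ell_A(\H^1_\m(M))$. You simply spell out a bit more explicitly why $\depth_A \widetilde{M} > 0$ and why the higher local cohomology of $\widetilde{M}/M$ vanishes.
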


\begin{proof}
We note $\depth_A \widetilde{M}>0$. Applying the local cohomology functor $\H_\m^i(\ast )$ to the natural exact sequence 
$$
0\to M\to \widetilde{M}\to \widetilde{M}/M\to 0
$$ 
of $A$-modules, we get the exact sequence
\begin{center}
$0\to \widetilde{M}/M\to \H_\m^1(M)\to \H_\m^1(\widetilde{M})\to 0$ and $0\to \H_\m^i(M)\to\H_\m^i(\widetilde{M})\to 0$
\end{center}
of $A$-modules for all $i\ge 2$. Hence, $\H_\m^1(\widetilde{M})=(0)$ because $\ell_A(\widetilde{M}/M)=\ell_A(\H_\m^1(M))<\infty$.
Therefore, $\depth \widetilde{M}\ge 2$ and $\H_\m^i(M)\cong\H_\m^i(\widetilde{M})$ as an $A$-module for all $i\ge 2$. 
\end{proof}

As consequence of the proposition above, let us state two corollaries as follows.
The next result due to \cite{G80}. 
\begin{cor}[\cite{G80}]\label{1980}
$\widetilde{M}$ is a Cohen-Macaulay $A$-module if and only if $\H^i_\m(M)=(0)$ for $i\neq 1, \dim M$.
\end{cor}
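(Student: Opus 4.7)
The plan is to derive the equivalence directly from Proposition \ref{G80}, which has already done essentially all the work. The first preliminary observation is that $\dim_A \widetilde{M}=\dim_A M$: indeed, the isomorphism $\widetilde{M}/M\cong \H^1_\m(M)$ established just before Proposition \ref{G80}, together with the assumption that $\H^1_\m(M)$ is finitely generated and supported at $\m$, shows that $\widetilde{M}/M$ has finite length, so passing to $\widetilde{M}$ does not change the dimension.

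Next, I would translate the Cohen-Macaulayness of $\widetilde{M}$ into vanishing of local cohomology. By definition, $\widetilde{M}$ is Cohen-Macaulay if and only if $\H^i_\m(\widetilde{M})=(0)$ for every $i\neq \dim_A\widetilde{M}=\dim_A M$. Proposition \ref{G80} supplies $\depth_A\widetilde{M}\ge 2$, which gives $\H^0_\m(\widetilde{M})=\H^1_\m(\widetilde{M})=(0)$ for free, and also provides the isomorphisms $\H^i_\m(M)\cong \H^i_\m(\widetilde{M})$ for every $i\ge 2$. Consequently, the Cohen-Macaulayness of $\widetilde{M}$ is equivalent to $\H^i_\m(M)=(0)$ for all $i$ with $2\le i\le \dim_A M-1$.

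Finally, I would match this against the condition in the statement. Because $\depth_A M>0$ by hypothesis, one already has $\H^0_\m(M)=(0)$, so the range $i\neq 1,\dim_A M$ reduces exactly to $2\le i\le \dim_A M-1$ together with $i=0$, and the latter is automatic. This proves both implications simultaneously. There is no real obstacle here: the whole content of the corollary has been built into Proposition \ref{G80}, and the only care needed is the bookkeeping in low degrees ($i=0,1$) and the dimension-preservation step in the first paragraph; once those are in place the equivalence is immediate.
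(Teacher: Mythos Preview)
Your argument is correct and is precisely the intended one: the paper gives no explicit proof of this corollary, introducing it simply as a consequence of Proposition \ref{G80} (with a reference to \cite{G80}), and your write-up supplies exactly the routine bookkeeping---dimension preservation via $\ell_A(\widetilde{M}/M)<\infty$, vanishing of $\H^0_\m(\widetilde{M})$ and $\H^1_\m(\widetilde{M})$ from $\depth_A\widetilde{M}\ge 2$, and the isomorphisms $\H^i_\m(M)\cong\H^i_\m(\widetilde{M})$ for $i\ge 2$---that makes this deduction transparent.
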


We recall that, for a positive integer $n$,
an $A$-module $N$ is said to satisfy the condition $(S_n)$ of Serre if $\depth N_\p\ge\min\{n, \dim N_\p\}$ for all $\p\in\Supp_A N$. The $A$-module $\widetilde{M}$ does not always satisfy the condition $(S_2)$ of Serre.

\begin{cor}\label{S2}The following two conditions are equivalent.
\begin{enumerate}
\item[$(1)$] $\widetilde{M}$ satisfies the condition $(S_2)$ of Serre.
\item[$(2)$] $\depth M_\p\ge\min\{2, \dim M_\p\}$ for all $\p\in\Supp_A M\setminus \{\m\}$.
\end{enumerate} 
\end{cor}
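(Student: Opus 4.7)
The plan is to reduce the $(S_2)$ condition on $\widetilde{M}$ to the corresponding condition on $M$ away from the maximal ideal, using that $\widetilde{M}/M \cong \H^1_\m(M)$ has finite length.

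First I would record two bookkeeping facts. Since $\widetilde{M}/M$ is supported only at $\m$, localising the exact sequence $0 \to M \to \widetilde{M} \to \widetilde{M}/M \to 0$ at any $\p \in \Spec A \setminus \{\m\}$ yields a canonical isomorphism $M_\p \cong \widetilde{M}_\p$; in particular $\depth_{A_\p} M_\p = \depth_{A_\p} \widetilde{M}_\p$ and $\dim_{A_\p} M_\p = \dim_{A_\p} \widetilde{M}_\p$ for all such $\p$. Moreover $\Supp_A \widetilde{M} = \Supp_A M$, because $\m \in \Supp_A M$ already.

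Next I would handle the prime $\p = \m$ separately. By Proposition \ref{G80} we have $\depth_A \widetilde{M} \ge 2$, and $\dim_A \widetilde{M} = \dim_A M \ge 2$ since $\widetilde{M}/M$ has finite length and $\dim_A M \ge 2$ by hypothesis. Hence $\depth_A \widetilde{M} \ge \min\{2, \dim_A \widetilde{M}\}$, so the condition $(S_2)$ at $\m$ is satisfied automatically and contributes nothing to the comparison.

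Finally I would assemble the equivalence: by definition, $\widetilde{M}$ satisfies $(S_2)$ iff $\depth \widetilde{M}_\p \ge \min\{2,\dim \widetilde{M}_\p\}$ for every $\p \in \Supp_A \widetilde{M} = \Supp_A M$. The case $\p = \m$ being automatic by the previous step, this is equivalent to the same inequality at every $\p \in \Supp_A M \setminus \{\m\}$, which via the isomorphism $M_\p \cong \widetilde{M}_\p$ is exactly condition (2). I do not expect any serious obstacle here; the only point to watch is verifying $\dim \widetilde{M} = \dim M$ and $\Supp \widetilde{M} = \Supp M$ so that the $(S_2)$ condition is indexed over the same set of primes as in (2).
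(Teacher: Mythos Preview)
Your proposal is correct and follows essentially the same approach as the paper: both arguments use that $\widetilde{M}/M$ has finite length to get $M_\p \cong \widetilde{M}_\p$ for $\p \neq \m$ and $\Supp_A M = \Supp_A \widetilde{M}$, and then invoke $\depth_A \widetilde{M} \ge 2$ from Proposition~\ref{G80} to handle the prime $\m$ automatically. Your write-up simply spells out a few more of the routine verifications (such as $\dim \widetilde{M} = \dim M$) than the paper does.
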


\begin{proof}
We have $\Supp_A M=\Supp_A \widetilde{M}$. For each $\p\in\Supp_A M\setminus \{\m\}$, we get $M_\p=\widetilde{M}_\p$ because $\widetilde{M}/M\cong\H_\m^1(M)$ which is finite length. Hence, the condition (2) is equivalent to saying that $\depth \widetilde{M}_\p\ge\min\{2, \dim \widetilde{M}_\p\}$ for all $\p\in\Supp_A \widetilde{M}$ because $\depth \widetilde{M}\ge 2$. This completes the proof.
\end{proof}

In particular, if $M_\p$ is a Cohen-Macaulay $A_\p$-module for all $\p\in\Supp_A M\setminus \{\m\}$, then $\widetilde{M}$ satisfies the condition $(S_2)$ of Serre. 

To see the sequence $a,b$ is a regular sequence on $\widetilde{M}$, we give the following.

\begin{lem}\label{2.5}
Let $a_1, a_2\in\m$. Then the following two conditions are equivalent.
\begin{enumerate}
	\item[$(1)$] The sequence $a_1, a_2$ is a regular sequence on $\widetilde{M}$.
	\item[$(2)$] The sequence $a_1, a_2$ forms a filter regular sequence on $M$.
\end{enumerate}
\end{lem}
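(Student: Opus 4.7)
The plan is to translate both conditions into statements about associated primes of the relevant modules and compare them by means of the finite-length inclusion $M\hookrightarrow\widetilde{M}$ together with Proposition \ref{G80}.

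First I would record two observations that will do all the work. From Proposition \ref{G80}, $\depth_A\widetilde{M}\ge 2$, hence $\m\notin \Ass_A\widetilde{M}$, and moreover whenever $a_1$ is a non-zerodivisor on $\widetilde{M}$ the depth inequality $\depth_A(\widetilde{M}/a_1\widetilde{M})\ge \depth_A\widetilde{M}-1\ge 1$ forces $\m\notin\Ass_A(\widetilde{M}/a_1\widetilde{M})$. Moreover, since $\widetilde{M}/M\cong \H_\m^1(M)$ has finite length, $\widetilde{M}_\p=M_\p$ for every $\p\in\Spec A\setminus\{\m\}$; consequently,
\[
\Ass_A\widetilde{M}\setminus\{\m\}=\Ass_A M\setminus\{\m\}\quad\text{and}\quad\Ass_A(\widetilde{M}/a_1\widetilde{M})\setminus\{\m\}=\Ass_A(M/a_1M)\setminus\{\m\},
\]
the second equality being valid as soon as $a_1$ is a non-zerodivisor on $\widetilde{M}$.

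For the direction $(1)\Rightarrow(2)$, regularity on $\widetilde{M}$ localizes to regularity on $\widetilde{M}_\p=M_\p$ for every $\p\ne\m$, and this is exactly the filter-regularity of $a_1,a_2$ on $M$ demanded in (2). For the direction $(2)\Rightarrow(1)$, filter-regularity furnishes $a_1\notin\p$ for every $\p\in\Ass_A M\setminus\{\m\}$, which, combined with $\m\notin\Ass_A\widetilde{M}$ and the first displayed equality, shows that $a_1$ is a non-zerodivisor on $\widetilde{M}$. Hence $\m\notin\Ass_A(\widetilde{M}/a_1\widetilde{M})$, and by the second displayed equality each associated prime of $\widetilde{M}/a_1\widetilde{M}$ lies in $\Ass_A(M/a_1M)\setminus\{\m\}$; filter-regularity of $a_2$ on $M/a_1M$ supplies $a_2\notin\p$ for each such $\p$, so $a_2$ is a non-zerodivisor on $\widetilde{M}/a_1\widetilde{M}$.

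The only substantive input is the depth bound $\depth_A\widetilde{M}\ge 2$ already supplied by Proposition \ref{G80}; with that in hand, the argument reduces to the standard observation that being regular versus being filter-regular only differs in whether $\m$ is permitted as an associated prime, and the role of $\widetilde{M}$ is precisely to remove this discrepancy.
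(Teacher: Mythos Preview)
Your proof is correct and follows essentially the same strategy as the paper: both arguments reduce the equivalence to the equality $\Ass_A(\widetilde{M}/a_1\widetilde{M})=\Ass_A(M/a_1M)\setminus\{\m\}$, using that $\widetilde{M}/M$ has finite length and that $\depth_A\widetilde{M}\ge 2$ from Proposition~\ref{G80}. The only cosmetic difference is that you obtain this equality by localizing at $\p\ne\m$ (so that $\widetilde{M}_\p=M_\p$), whereas the paper reaches it via the exact sequence $0\to M/(a_1\widetilde{M}\cap M)\to\widetilde{M}/a_1\widetilde{M}\to\widetilde{M}/(a_1\widetilde{M}+M)\to 0$ together with the identification $a_1\widetilde{M}\cap M=\bigcup_{n>0}a_1M:_M\m^n$.
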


\begin{proof}
In order to prove this equivalence, it is enough to show that $$\Ass_A\widetilde{M}/a_1\widetilde{M}=\Ass_AM/a_1M\setminus\{\fkm\}.$$ The natural exact sequence 
$
0\to M/a_1\widetilde{M}\cap M\to \widetilde{M}/a_1\widetilde{M}\to \widetilde{M}/a_1\widetilde{M}+M\to 0
$ 
of $A$-modules implies the equality $\Ass_AM/a_1\widetilde{M}\cap M=\Ass_A\widetilde{M}/a_1\widetilde{M}$, since $\fkm\not\in\Ass_A\widetilde{M}/a_1\widetilde{M}$ and $\Ass_A\widetilde{M}/a_1\widetilde{M}+M\subseteq \{\m\}$. Thus we get the required equality because 
$
a_1\widetilde{M}\cap M= \bigcup_{n>0}\ a_1M:_M \m^n.
$
\end{proof}

Thus the sequence $a,b$ forms a regular sequence on $\widetilde{M}$. Thanks to Lemma \ref{universal property}, the $A$-module $\widetilde{M}$ does not depend on the chosen filter regular sequence in $\fkc$, namely,

\begin{cor}
The equality $\widetilde{M}=\{m/a_1\in W^{-1}M\mid m\in a_1M:_M a_2\}$ holds for every filter regular sequence $a_1,a_2\in\fkc$ on $M$.
\end{cor}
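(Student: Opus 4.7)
The goal is to show the submodule $\widetilde{M}$ of $W^{-1}M$ does not depend on the choice of defining sequence in $\fkc$. Let $\widetilde{M}' = \{m/a_1 \in W^{-1}M \mid m \in a_1 M :_M a_2\}$ denote the candidate built from any filter regular sequence $a_1, a_2 \in \fkc$ on $M$. The plan is to prove $\widetilde{M} = \widetilde{M}'$ by a direct double inclusion inside $W^{-1}M$, bypassing the universal property in favor of an elementary annihilation computation.

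The essential input is the annihilation $\fkc \cdot (\widetilde{M}/M) = 0$, which is immediate from the isomorphism $\widetilde{M}/M \cong \H^1_\fkm(M)$ established in the paragraph preceding Proposition \ref{G80}, together with the definition $\fkc = (0):_A \H^1_\fkm(M)$. Since $a_1, a_2 \in \fkc$, this yields $a_1 \widetilde{M} \subseteq M$ and $a_2 \widetilde{M} \subseteq M$. For $x \in \widetilde{M}$, setting $m := a_1 x \in M$ gives $x = m/a_1$ in $W^{-1}M$, while the identity $a_2 m = a_1(a_2 x)$ combined with $a_2 x \in M$ shows $m \in a_1 M :_M a_2$; hence $x \in \widetilde{M}'$, so $\widetilde{M} \subseteq \widetilde{M}'$.

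For the reverse inclusion $\widetilde{M}' \subseteq \widetilde{M}$, I would run exactly the same argument after establishing the analogous annihilation $\fkc \cdot (\widetilde{M}'/M) = 0$. This is the only point requiring a small additional verification: one must re-trace the computation preceding Proposition \ref{G80} with $(a_1, a_2)$ in place of $(a, b)$. That derivation uses only that $a_1$ is $M$-regular (from $\depth_A M > 0$ and $a_1$ being filter regular, since $\fkm$ is not associated), that $a_2$ is filter regular on $M/a_1 M$ (so that $\bigcup_{n>0}\,a_1 M :_M \fkm^n = a_1 M :_M a_2$), and that $a_1, a_2 \in \fkc$ (so that $a_1 \H^1_\fkm(M) = 0$ and $a_2 \H^0_\fkm(M/a_1 M) = 0$). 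All three properties hold by hypothesis on the new filter regular sequence, so the same chain of isomorphisms yields $\widetilde{M}'/M \cong \H^1_\fkm(M)$ and hence $\fkc \cdot (\widetilde{M}'/M) = 0$. The main ``obstacle'' is therefore just recognizing that the setup behind Proposition \ref{G80} is symmetric in the filter regular sequences drawn from $\fkc$; beyond this observation, the argument is purely formal.
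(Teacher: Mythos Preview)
Your argument is correct. The forward inclusion $\widetilde{M}\subseteq\widetilde{M}'$ is exactly the explicit construction of the comparison map in Lemma~\ref{universal property}, and your reverse inclusion goes through once you observe that the derivation of $\widetilde{M}'/M\cong\H^1_\fkm(M)$ uses only the three properties you list, all of which hold for any filter regular sequence $a_1,a_2\in\fkc$.

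The paper's route is slightly different. Rather than redoing the local cohomology computation for $\widetilde{M}'$ and arguing by symmetry, the paper invokes Lemma~\ref{universal property} together with Lemma~\ref{2.5}: since $a_1,a_2$ is filter regular on $M$, Lemma~\ref{2.5} gives that $a_1,a_2$ is a \emph{regular} sequence on $\widetilde{M}$, and then part~2 of Lemma~\ref{universal property} (with $X=\widetilde{M}$) forces the comparison map $g:\widetilde{M}\to\widetilde{M}'$ to be bijective; since $g$ is visibly the identity inside $W^{-1}M$, one gets $\widetilde{M}=\widetilde{M}'$. Your approach is more self-contained for this particular corollary, as it bypasses Lemma~\ref{2.5} entirely. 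The paper's approach is more structural: it packages the independence as a formal consequence of the universal property, and Lemma~\ref{2.5} is in any case needed later (e.g.\ for Corollary~\ref{independent}), so there is no real economy in avoiding it here.
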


Furthermore, we have the following.

\begin{prop}\label{structure}
Let $a_1$ and $a_2$ be elements in $\fkc$ such that the sequence $a_1,a_2$ is a filter regular sequence on $M$.
Let $f: M \to X$ be a monomorphism of $A$-modules such that $(a_1,a_2)\cdot\Coker f=(0)$. Then there exists a unique homomorphism $g:X\rightarrow \widetilde{M}$ of $A$-modules such that $g\circ f=i:M\hookrightarrow \widetilde{M}$ which is the inclusion map. Furthermore the following two assertions hold.
\begin{enumerate}
\item[$1.$] The following three conditions are equivalent.
\begin{enumerate}
	\item[$(1)$] The homomorphism $g$ is injective.
	\item[$(2)$] $a_1$ is a regular element on $X$.
	\item[$(3)$] $\depth X>0$ and $\ell_A(\Coker f)<\infty$.
\end{enumerate}
\vspace{2mm}
\item[$2.$] The following three conditions are equivalent.
\begin{enumerate}
	\item[$(1)$] The homomorphism $g$ is bijective.
	\item[$(2)$] $a_1,a_2$ is a regular sequence on $X$.
	\item[$(3)$] $\depth X\ge 2$ and $\ell_A(\Coker f)<\infty$.
\end{enumerate}
\end{enumerate}
\end{prop}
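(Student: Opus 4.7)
My plan is to derive both assertions from Lemma \ref{universal property} together with the structural results on $\widetilde{M}$ already established in this section. I will first observe that the existence and uniqueness of $g$ are exactly the content of the opening paragraph of Lemma \ref{universal property}: since $\depth_A M>0$ forces $a_1$ to be a non-zerodivisor on $M$, that lemma applies directly with $a=a_1$, $b=a_2$, so no further argument is needed here.

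For assertion 1, I plan to prove (1)$\Leftrightarrow$(2) directly from Lemma \ref{universal property}(1). The implication (2)$\Rightarrow$(3) is cheap: once $g$ is injective, the cokernel $\Coker f$ embeds into $\widetilde{M}/M\cong \H^1_\fkm(M)$, which has finite length by hypothesis, and $\depth X>0$ is immediate from the regularity of $a_1$ on $X$. For (3)$\Rightarrow$(2) I will apply $\Ass(-)$ to $0\to M\to X\to \Coker f\to 0$, getting $\Ass X\subseteq \Ass M\cup \Ass\Coker f\subseteq \Ass M\cup\{\fkm\}$; then $\depth X>0$ excludes $\fkm$, and the filter-regularity of $a_1$ on $M$ shows that $a_1$ lies outside every prime in $\Ass X$.

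For assertion 2, I would handle (1)$\Leftrightarrow$(2) by combining Lemma \ref{universal property}(2) with the fact, proved just before Lemma \ref{2.5}, that $a_1,a_2$ is a regular sequence on $\widetilde{M}$. The implication (2)$\Rightarrow$(3) is again immediate: $\depth X\ge 2$ is definitional, and the finite length of $\Coker f$ follows from the implication (2)$\Rightarrow$(3) of assertion 1, since $a_1$ is in particular $X$-regular.

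The hard part will be (3)$\Rightarrow$(1) of assertion 2. By assertion 1 the hypothesis already yields an injective $g$, and by Lemma \ref{universal property}(2) it remains to show $a_1X:_Xa_2=a_1X$, i.e., that $a_2$ is a non-zerodivisor on $X/a_1X$. My strategy is to tensor $0\to M\to X\to \Coker f\to 0$ with $A/a_1A$: using $a_1\cdot\Coker f=(0)$ one obtains the exact sequence $0\to \Coker f\to M/a_1M\to X/a_1X\to \Coker f\to 0$. Localising at any prime $\fkp\neq\fkm$ annihilates the finite-length term, giving $(M/a_1M)_\fkp\cong(X/a_1X)_\fkp$, and hence $\Ass(X/a_1X)\setminus\{\fkm\}=\Ass(M/a_1M)\setminus\{\fkm\}$. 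Since $\depth X\ge 2$ forces $\fkm\notin\Ass(X/a_1X)$, the filter-regularity of $a_1,a_2$ on $M$ then closes the argument, and this associated-primes bookkeeping is where I expect the main technical obstacle to sit.
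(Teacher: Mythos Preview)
Your proof is correct, and for assertion~1 it is essentially identical to the paper's argument. The genuine divergence is in the implication $(3)\Rightarrow(1)$ of assertion~2. You reduce to checking that $a_2$ is regular on $X/a_1X$ by applying the snake lemma for multiplication by $a_1$ to $0\to M\to X\to\Coker f\to 0$, obtaining the four-term exact sequence $0\to\Coker f\to M/a_1M\to X/a_1X\to\Coker f\to 0$; localizing away from $\fkm$ identifies $\Ass(X/a_1X)\setminus\{\fkm\}$ with $\Ass(M/a_1M)\setminus\{\fkm\}$, and the hypothesis $\depth X\ge 2$ together with the filter-regularity of $a_2$ finishes. The paper instead applies the local cohomology functor to the same short exact sequence: from $\H^0_\fkm(X)=\H^1_\fkm(X)=0$ one reads off $\Coker f\cong\H^1_\fkm(M)$, so that the induced map $h:\Coker f\to\widetilde{M}/M$ in the obvious commutative diagram is an injection between modules of the same finite length, hence bijective, and the five lemma yields bijectivity of $g$. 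Your approach is more elementary in that it avoids the local-cohomology length comparison and instead goes through the sufficient condition of Lemma~\ref{universal property}(2) directly; the paper's approach is shorter and has the pleasant byproduct of identifying $\Coker f$ explicitly with $\H^1_\fkm(M)$. One small wording issue: your phrase ``tensor with $A/a_1A$'' is only literally correct if $a_1$ is a non-zerodivisor on $A$, which is not assumed; what you are really using is the snake lemma for multiplication by $a_1$, and you should say so.
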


\begin{proof}
Thanks to Lemma \ref{universal property}, it suffices to show the assertions 1 and 2. Look at the commutative and exact diagram
$$
\begin{CD}
0   @>>>  M  @>f>>  X              @>>>  \Coker f         @>>>  0   \\
@.        @|        @VgVV                @VhVV                  @.  \\
0   @>>>  M  @>i>>  \widetilde{M}  @>>>  \widetilde{M}/M  @>>>  0   \\
\end{CD}
$$
of $A$-modules. For the assertion 1, the equivalence $(1)\Leftrightarrow (2)$ follows from \ref{universal property}. From the condition (1) we obtain that $\depth X>0$ (recall that $\depth \widetilde{M}\ge 2$) and that the induced morphism $h$ is injective and hence $\ell_A(\Coker f)<\infty$ because $\ell_A(\widetilde{M}/M)=\ell_A(\H^1_\m(M))<\infty$. The condition (3) implies that $\Ass_AM=\Ass_AX$ from the top exact sequence of the diagram above, so that $a_1$ is a regular element on $X$, as $a_1$ is a regular element on $M$.

For the assertion 2, it is enough to show the implication $(3)\Rightarrow (1)$. Assume that $\depth X\ge 2$ and $\ell_A(\Coker f)<\infty$. Then we obtain $\Coker f\cong \H_\m^1(M)$ as an $A$-module from applying the local cohomology functor $\H_\m^i(\ast )$ to the top exact sequence of the diagram above. From the assertion 1 we obtain $g$ is injective and hence so is $h$. Then the monomorphism $h$ must be bijective because $\Coker f$ and $\widetilde{M}/M$ have the same length $\ell_A(\H_\m^1(M))$. Therefore, $g$ is bijective.
\end{proof}

The proposition above means that the overmodule $\widetilde{M}$ of $M$ is characterized by two conditions $\depth_A \widetilde{M}\ge 2$ and $\ell_A(\widetilde{M}/M)<\infty$, namely,

\begin{cor}\label{minimality}
Let an $A$-module $X$ be an overmodule of $M$ satisfying $\depth_A X\ge 2$ and $\ell_A(X/M)<\infty$. Then $X=\widetilde{M}$. 
\end{cor}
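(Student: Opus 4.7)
The plan is to apply Proposition~\ref{structure} to the inclusion $f\colon M\hookrightarrow X$. Once a filter regular sequence $a_1,a_2\in\fkc$ on $M$ with $(a_1,a_2)\cdot\Coker f=(0)$ has been produced, that proposition will deliver a homomorphism $g\colon X\to\widetilde{M}$ with $g\circ f=i$, and because the hypotheses $\depth_A X\ge 2$ and $\ell_A(\Coker f)<\infty$ are precisely condition~(3) of assertion~2 of the proposition, $g$ will automatically be bijective. Since $g$ restricts to the identity on $M$, this will identify $X$ with $\widetilde{M}$.

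What remains is to construct the sequence. First I would dispose of the degenerate case $\H^1_\m(M)=(0)$. Applying local cohomology to $0\to M\to X\to X/M\to 0$, and using $\depth_A M>0$, $\depth_A X\ge 2$, together with the fact that $X/M$ has finite length (so $\H^0_\m(X/M)=X/M$ and $\H^i_\m(X/M)=(0)$ for $i>0$), one reads off an isomorphism $X/M\cong\H^1_\m(M)$. Thus if $\H^1_\m(M)=(0)$ then $X=M$, and since $\widetilde{M}/M\cong\H^1_\m(M)=(0)$ also forces $\widetilde{M}=M$, there is nothing to prove.

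Assume henceforth $\H^1_\m(M)\neq(0)$, which makes $\fkc$ an $\m$-primary ideal. Since $X/M$ has finite length, there is some $N$ with $\fkc^N\cdot(X/M)=(0)$, and $\fkc^N$ is still $\m$-primary. Because $\depth_A M>0$, the finitely many associated primes of $M$ distinct from $\m$ cannot contain $\fkc^N$, so by prime avoidance one picks $a_1\in\fkc^N$ outside each of them; the same procedure applied to $M/a_1M$ yields $a_2\in\fkc^N$. Then $a_1,a_2$ is a filter regular sequence on $M$ lying in $\fkc$ with $(a_1,a_2)\cdot\Coker f=(0)$, so Proposition~\ref{structure} closes the argument. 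The main (though admittedly mild) obstacle is this bookkeeping---arranging the sequence to lie in both $\fkc$ and $\Ann_A(X/M)$---which the passage to $\fkc^N$ resolves.
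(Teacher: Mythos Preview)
Your proof is correct and follows essentially the same strategy as the paper's: produce a filter regular sequence in $\fkc$ annihilating $X/M$ and invoke Proposition~\ref{structure}. The paper does the bookkeeping in the reverse order---it first picks a filter regular sequence $a',b'$ in $\Ann_A(X/M)$ and then replaces them by powers $a'^n,b'^n$ to land in $\fkc$---but the content is identical.

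One small redundancy: having already shown $X/M\cong\H^1_\m(M)$, you get $\fkc\cdot(X/M)=(0)$ immediately, so the exponent $N$ is unnecessary (you may take $N=1$). This observation in fact makes your version slightly cleaner than the paper's, since any filter regular sequence in $\fkc$ automatically kills $X/M$ and no passage to powers is needed.
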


\begin{proof}
We can find a sequence $a',b'\in (0):_AX/M$ that forms a filter regular sequence on $M$, since $\ell_A(X/M)<\infty$. Choose a positive integer $n$ such that ${a'}^n, {b'}^n\in \fkc$ and put $a_1={a'}^n$ and $a_2={b'}^n$. Then $a_1, a_2\in\fkc$ is a filter regular sequence on $M$ such that $(a_1, a_2)X/M=(0)$, and therefore $X=\widetilde{M}$ by Proposition \ref {structure}.
\end{proof}

Put $s=\dim M$. We simply say that a system $a_1,a_2,\dots ,a_s$ of parameters for $M$ is {\it standard}, if for all integers $n_1,n_2,\dots n_s\ge 1$, the sequence $a_1^{n_1},a_2^{n_2},\dots ,a_s^{n_s}$ forms a $d$-sequence on $M$ in any order. A standard system of parameters has good properties. See, e.g., \cite{STC,T} for details. For instance, there exists a standard system of parameters for $M$ if and only if $\H_\m^i(M)$ is a finitely generated $A$-module for all integers $i\neq s$. For a system $a_1,a_2,\dots ,a_s$ of parameters for $M$, $a_1,a_2,\dots ,a_s$ is standard if and only if $(a_1,a_2,\dots ,a_s)\cdot\H^i_\m(M/(a_1,a_2,\dots ,a_j)M)=(0)$ for $i,j\ge 0$ with $i+j<s$. 

Since a standard system of parameters for $M$ forms a filter regular sequence on  $M$ in any order, we obtain the following result from Lemma \ref{2.5}.

\begin{cor}
For any standard system $a_1,a_2,\dots ,a_s$ of parameters for $M$, every two elements $a_i, a_j\ (i\neq j)$ form a regular sequence on $\widetilde{M}$.
\end{cor}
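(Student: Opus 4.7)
The proof is essentially immediate from Lemma \ref{2.5}, which states that for $a_1, a_2 \in \m$ the pair forms a regular sequence on $\widetilde{M}$ precisely when it is a filter regular sequence on $M$. My task is thus reduced to checking that any pair $a_i, a_j$ ($i \neq j$) drawn from a standard system of parameters of $M$ forms a filter regular sequence on $M$ in the given order.

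The hypothesis that $a_1, \ldots, a_s$ is a \emph{standard} system of parameters has been set up to be permutation invariant: by definition one demands that all the power sequences $a_1^{n_1}, \ldots, a_s^{n_s}$ form a $d$-sequence on $M$ in \emph{any} order. Equivalently, as recorded immediately before the corollary, after any reordering the cohomological conditions $(a_1, \ldots, a_s) \cdot \H^i_\m(M/(a_1, \ldots, a_j)M) = (0)$ for $i+j < s$ continue to hold. Hence, after relabeling, I may assume that $a_i, a_j$ occupy the first two slots of a standard system of parameters for $M$, and the task becomes showing that the first two elements of any standard system form a filter regular sequence on $M$.

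For such a pair $b_1, b_2$, the standardness conditions at $(i,j)=(1,0)$ and $(i,j)=(0,1)$ give $b_1 \cdot \H^1_\m(M) = (0)$ (so $b_1$ is filter regular on $M$, since $\Ass_A M \setminus \{\m\}$ is detected by $\H^i_\m(M)_{\p}$ considerations at primes $\p \neq \m$, and the annihilator condition forces $b_1$ to avoid those associated primes after truncating torsion) and $b_2 \cdot \H^0_\m(M/b_1 M) = (0)$ (so $b_2$ is filter regular on $M/b_1 M$). Thus $a_i, a_j$ is a filter regular sequence on $M$, and Lemma \ref{2.5} yields that $a_i, a_j$ is a regular sequence on $\widetilde{M}$.

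There is no substantive obstacle: the result is a direct translation via Lemma \ref{2.5} of the built-in permutation symmetry in the definition of a standard system of parameters.
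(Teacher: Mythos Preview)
Your approach is exactly the paper's: the corollary is stated immediately after the sentence ``Since a standard system of parameters for $M$ forms a filter regular sequence on $M$ in any order, we obtain the following result from Lemma \ref{2.5},'' so the paper, like you, reduces everything to Lemma \ref{2.5} plus the permutation-invariant filter regularity of standard systems.

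The one wobble is your parenthetical justification of that filter regularity. The conditions $b_1\cdot\H^1_\m(M)=0$ and $b_2\cdot\H^0_\m(M/b_1M)=0$ say that $b_1,b_2$ \emph{annihilate} certain $\m$-torsion pieces; they do not, by themselves, say that $b_1,b_2$ avoid the associated primes of $M$ and $M/b_1M$ other than $\m$, which is what filter regularity asks. The paper does not attempt to prove this implication either; it simply quotes the well-known fact (see, e.g., \cite{STC,T,SV}) that a standard system of parameters --- being a $d$-sequence that is simultaneously a system of parameters, in every order --- is a filter regular sequence in every order. If you want a self-contained argument, the cleanest route is: since $M$ admits a standard system of parameters, $M$ is generalized Cohen--Macaulay, hence $M_\p$ is Cohen--Macaulay for every $\p\in\Supp_A M\setminus\{\m\}$, so any system of parameters for $M$ is already filter regular; the same then applies to $M/b_1M$. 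With that correction your write-up is fine and coincides with the paper's.
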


\

In what follows, let $a_1,a_2,\dots ,a_s$ be a standard system of parameters for $M$. Then  $a_1,a_2,\dots ,a_s\in\fkc$ because $(a_1,a_2,\dots ,a_s)\cdot\H^1_\m(M)=(0)$.

\begin{prop}\label{independentold}
If $\alpha_1,\alpha_2,\dots , \alpha_s\in\widetilde{M}$ and $\sum_{i=1}^sa_i\alpha_i=0$, then $\alpha_1,\alpha_2,\dots , \alpha_s\in M$.
\end{prop}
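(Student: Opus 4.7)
The plan is to proceed by induction on $s=\dim M$. For the base case $s=2$, I invoke Lemma \ref{2.5}: the pair $a_1,a_2$ forms a regular sequence on $\widetilde{M}$, so the relation $a_1\alpha_1 = -a_2\alpha_2 \in a_1\widetilde{M}$ places $\alpha_2$ in $a_1\widetilde{M}:_{\widetilde{M}}a_2 = a_1\widetilde{M}$. Since $a_1\in\mathfrak{c}$ annihilates $\widetilde{M}/M$, one has $a_1\widetilde{M}\subseteq M$, so $\alpha_2\in M$; a symmetric argument (using $a_2,a_1$ as filter regular sequence) gives $\alpha_1\in M$.

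For the inductive step $s\geq 3$ I would exploit the explicit description $\widetilde{M} = \{m/a_1 \mid m\in a_1M:_Ma_2\}$ arising from the pair $(a_1,a_2)$. Writing each $\alpha_i = m_i/a_1$ with $m_i\in a_1M:_Ma_2$ and multiplying the given relation by the nonzerodivisor $a_1$, I obtain a scalar relation $\sum_{i=1}^s a_im_i = 0$ in $M$; the desired conclusion $\alpha_i\in M$ is equivalent to $m_i\in a_1M$ for every $i$. Because $a_1,\ldots,a_s$ is a standard system of parameters, $(a_1,\ldots,a_s)$ annihilates $\H^0_\mathfrak{m}(M/a_1M) = (a_1M:_Ma_2)/a_1M$, so $a_jm_i\in a_1M$ for all $i,j$, yielding unique auxiliary elements $n_{ij}\in M$ with $a_jm_i = a_1n_{ij}$; these in turn satisfy the relations $\sum_i a_in_{ik} = 0$ in $M$ obtained by multiplying the scalar relation by $a_k$ and cancelling $a_1$.

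The hard part will be deducing $m_i\in a_1M$ from this web of relations. The $d$-sequence identity $(a_1M:_Ma_2)\cap(a_1,\ldots,a_s)M = a_1M$ enjoyed by standard systems would conclude the argument at once, but only once we know $m_i\in (a_1,\ldots,a_s)M$, which is not automatic. My intended bridge is to first verify that $a_1,\ldots,a_s$ is itself a standard system of parameters on $\widetilde{M}$; this should follow from the isomorphism $\H^i_\mathfrak{m}(\widetilde{M})\cong\H^i_\mathfrak{m}(M)$ for $i\geq 2$ given by Proposition \ref{G80}, together with $(a_1,\ldots,a_s)\cdot\H^1_\mathfrak{m}(M) = 0$ and the finite length of $\widetilde{M}/M$. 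With standardness transferred to $\widetilde{M}$, the associated $d$-sequence identities on $\widetilde{M}$, combined with the pairwise regularity of $a_i,a_j$ on $\widetilde{M}$, should allow me to reduce the length-$s$ Koszul relation to one involving $s-1$ parameters, either by projecting to $\widetilde{M}/a_k\widetilde{M}$ and relating it to the $(S_2)$-ification $\widetilde{M/a_kM}$, or by iterated application of the auxiliary relations above, at which point the inductive hypothesis closes the argument.
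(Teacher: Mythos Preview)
Your base case is correct and coincides with the paper's. The inductive step, however, has two genuine gaps.

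First, the proposed bridge through ``standardness on $\widetilde{M}$'' is both circular and unjustified. In the paper's logical order, standardness of $a_1,\dots,a_s$ on $\widetilde{M}$ is Corollary~\ref{independent}, which is \emph{derived from} the proposition you are trying to prove. Your independent argument for it is incomplete: knowing $(a_1,\dots,a_s)\cdot\H^i_{\fkm}(\widetilde{M})=0$ for $i<s$ (which is what Proposition~\ref{G80} gives) is \emph{not} the definition of standardness; one needs $(a_1,\dots,a_s)\cdot\H^i_{\fkm}\bigl(\widetilde{M}/(a_1,\dots,a_j)\widetilde{M}\bigr)=0$ for all $i+j<s$, and those quotient conditions do not follow formally from the $j=0$ case. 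Moreover, even granting standardness on $\widetilde{M}$, you do not explain how $d$-sequence identities \emph{on $\widetilde{M}$} would force the $\alpha_i$ to lie in $M$; the web of auxiliary elements $n_{ij}$ you introduce does not visibly close.

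Second, your suggested quotient $M/a_kM$ is the wrong one: since $\depth_AM$ may equal $1$, the module $M/a_kM$ can have depth $0$, so the construction $\widetilde{(\,\cdot\,)}$ of Section~2 does not apply to it. The paper instead passes to $M' = M/a_s\widetilde{M}$, which has positive depth because $a_s\widetilde{M}=\bigcup_n a_sM:_M\fkm^n$. One then checks (via \cite[Proposition~2.2]{H82}) that $a_1,\dots,a_{s-1}$ is standard on $M'$, and (via Proposition~\ref{structure}) that $\widetilde{M}/a_s\widetilde{M}\hookrightarrow\widetilde{M'}$. Writing $\alpha_i=f_i/a_1$ with $f_i\in a_1\widetilde{M}$, the relation $\sum_{i=1}^{s-1}a_if_i\equiv 0\pmod{a_s\widetilde{M}}$ is now a relation in $\widetilde{M'}$, so induction gives $f_j\in a_1M+a_s\widetilde{M}$ for $j\neq s$. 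The final lift is exactly the pairwise regularity you already identified: $f_j\in a_1\widetilde{M}\cap(a_1M+a_s\widetilde{M})=a_1M+a_1a_s\widetilde{M}\subseteq a_1M$. The case $j=s$ follows by symmetry of the roles of the $a_i$.
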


\begin{proof}
For each $1\le i\le s$, we can write $\alpha_i=f_i/a_1$ for some $f_i\in a_1\widetilde{M}$. It is enough to show that $f_j\in a_1M$ for all integers $1\le j\le s$ if $\sum_{i=1}^sa_if_i=0$. To show this, we will use induction on $s$. 

Let $s=2$, and $a_1f_1+a_2f_2=0$. Then $f_1\in a_2M:_Ma_1=a_2\widetilde{M}$. Hence, $f_1\in a_1\widetilde{M}\cap a_2\widetilde{M}$. We have $$a_1\widetilde{M}\cap a_2\widetilde{M}=a_1M\cap a_2M$$ because $a_1\widetilde{M}\cap a_2\widetilde{M}=a_1a_2\widetilde{M}$ (recall that $a_1, a_2$ is a regular sequence on $\widetilde{M}$ and $a_i\widetilde{M}\subseteq M$ for all integers $1\le i\le s$). 
Therefore, we get $f_1\in a_1M$. Write $f_1=a_2m$ for some $m\in M$. Then $a_1a_2m+a_2f_2=0$, and hence $f_2\in a_1M$.

Assume that $s>2$. Let $1\le j\le s$. In order to show $f_j\in a_1M$, we may assume $j\neq s$. The sequence $a_1,a_2,\dots ,a_{s-1}$ forms a standard system of parameters for $M/a_s\widetilde{M}$ (see \cite[Proposition 2.2]{H82}). For each $1\le i\le s$, $f_i\in a_1\widetilde{M}$ and then $$f_j\ (\mod\ a_s\widetilde{M})\in a_1\cdot\widetilde{M}/a_s\widetilde{M}.$$ We have $\widetilde{M}/a_s\widetilde{M}\subseteq\widetilde{\widetilde{M}/a_s\widetilde{M}}=\widetilde{M/a_s\widetilde{M}}$ by Proposition \ref{structure}, so that $$f_i\ (\mod\ a_s\widetilde{M})\in a_1\cdot\widetilde{M/a_s\widetilde{M}}$$ for all $1\le i\le s-1$. From $\sum_{i=1}^{s-1}a_if_i\equiv 0$ $(\mod\ a_s\widetilde{M})$ we obtain $$f_j\in a_1M+a_s\widetilde{M}$$ for all $1\le i\le s-1$ by the inductive hypothesis. Since $f_j\in a_1\widetilde{M}$, we get $f_j\in a_1M+a_1\widetilde{M}\cap a_s\widetilde{M}$, and hence $f_j\in a_1M$ because $a_1\widetilde{M}\cap a_s\widetilde{M}=a_1a_s\widetilde{M}\subseteq a_1M$.
\end{proof}

For each $A$-module $N$ and each sequence $y_1,y_2,\dots ,y_\ell \in A$, we set 
$$
\widetilde{\Sigma}(y_1,y_2,\dots ,y_\ell; N)=\sum_{i=1}^\ell(y_1,\dots, y_{i-1}, y_{i+1},\dots y_\ell)N:_N y_i,
$$
which is an $A$-submodule of $N$. The proposition above yields the next.

\begin{cor}\label{independent}
$\widetilde{\Sigma}(a_1,a_2,\dots ,a_s; M)=\widetilde{\Sigma}(a_1,a_2,\dots ,a_s; \widetilde{M})$, and hence $a_1,a_2,\dots ,a_s$ is a standard system of parameters for $\widetilde{M}$.
\end{cor}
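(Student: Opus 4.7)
The plan is first to establish the equality $\widetilde{\Sigma}(a_1,\dots,a_s;M)=\widetilde{\Sigma}(a_1,\dots,a_s;\widetilde{M})$ as a direct consequence of Proposition \ref{independentold}, and then to use this, together with the fact that any power sequence extracted from a standard system remains standard, to verify the cohomological characterization of standardness on $\widetilde{M}$ recalled just before the Corollary. The inclusion $\widetilde{\Sigma}(a_1,\dots,a_s;M)\subseteq\widetilde{\Sigma}(a_1,\dots,a_s;\widetilde{M})$ is immediate from $M\subseteq\widetilde{M}$. For the reverse, take $\alpha\in(a_1,\dots,\widehat{a_i},\dots,a_s)\widetilde{M}:_{\widetilde{M}}a_i$ and write $a_i\alpha=\sum_{j\neq i}a_j\gamma_j$ with $\gamma_j\in\widetilde{M}$; setting $\alpha_i=-\alpha$ and $\alpha_j=\gamma_j$ for $j\ne i$ gives $\sum_{j=1}^s a_j\alpha_j=0$ in $\widetilde{M}$, whereupon Proposition \ref{independentold} forces $\alpha\in M$ and each $\gamma_j\in M$, so $\alpha\in(a_1,\dots,\widehat{a_i},\dots,a_s)M:_M a_i$.

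To conclude that $a_1,\dots,a_s$ is a standard system of parameters on $\widetilde{M}$ I verify $(a_1,\dots,a_s)\cdot\H^i_\fkm(\widetilde{M}/(a_{i_1},\dots,a_{i_j})\widetilde{M})=(0)$ for all $i_1<\dots<i_j$ and all $i+j<s$. When $j=0$ this is immediate from Proposition \ref{G80}: one has $\H^0_\fkm(\widetilde{M})=\H^1_\fkm(\widetilde{M})=(0)$, and $\H^k_\fkm(\widetilde{M})\cong\H^k_\fkm(M)$ for $k\ge 2$ is annihilated by $(a_1,\dots,a_s)$ thanks to standardness on $M$. When $j\ge 1$ and $i\ge 1$, tensoring the exact sequence $0\to M\to\widetilde{M}\to\widetilde{M}/M\to 0$ with $A/(a_{i_1},\dots,a_{i_j})$ produces a four-term exact sequence whose outer terms have finite length (since $\widetilde{M}/M\cong\H^1_\fkm(M)$ does); splitting it into two short exact sequences and applying local cohomology yields an isomorphism $\H^i_\fkm(\widetilde{M}/(a_{i_1},\dots,a_{i_j})\widetilde{M})\cong\H^i_\fkm(M/(a_{i_1},\dots,a_{i_j})M)$, and the desired conclusion in this range follows from standardness of $a_1,\dots,a_s$ on $M$.

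The main obstacle is the case $i=0$, $j\ge 1$: I must show $a_k\cdot[(a_{i_1},\dots,a_{i_j})\widetilde{M}:_{\widetilde{M}}\fkm^\infty]\subseteq(a_{i_1},\dots,a_{i_j})\widetilde{M}$ for $k\notin\{i_1,\dots,i_j\}$. Given $\alpha\in\widetilde{M}$ with $\fkm^N\alpha\subseteq(a_{i_1},\dots,a_{i_j})\widetilde{M}$ for some $N\gg 0$, for each $l\in\{1,\dots,s\}$ I write $a_l^N\alpha=\sum_{r=1}^j a_{i_r}\beta_{l,r}$ with $\beta_{l,r}\in\widetilde{M}$ and apply Proposition \ref{independentold} to the standard system $a_1,\dots,a_{l-1},a_l^N,a_{l+1},\dots,a_s$ to conclude $\alpha\in M$ and all $\beta_{l,r}\in M$. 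Hence $(a_1^N,\dots,a_s^N)\alpha\subseteq(a_{i_1},\dots,a_{i_j})M$, and since $a_1,\dots,a_s$ is a system of parameters, some power of $\fkm$ is contained in $(a_1^N,\dots,a_s^N)$, whence $\overline{\alpha}\in\H^0_\fkm(M/(a_{i_1},\dots,a_{i_j})M)$. Standardness of $a_1,\dots,a_s$ on $M$ now forces $a_k\alpha\in(a_{i_1},\dots,a_{i_j})M\subseteq(a_{i_1},\dots,a_{i_j})\widetilde{M}$, completing the proof.
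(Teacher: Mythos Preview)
Your proof of the equality $\widetilde{\Sigma}(a_1,\dots,a_s;M)=\widetilde{\Sigma}(a_1,\dots,a_s;\widetilde{M})$ is exactly the ``routine check'' the paper alludes to, carried out via Proposition~\ref{independentold}; there is no difference here.

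For the second assertion the approaches diverge. The paper does not argue directly: it simply invokes \cite[Appendix, Proposition~5]{KY}, which supplies a criterion under which standardness passes from $M$ to an overmodule once the equality of the $\widetilde{\Sigma}$'s is known. You instead verify the cohomological characterization $(a_1,\dots,a_s)\cdot\H^i_\fkm\bigl(\widetilde{M}/(a_{i_1},\dots,a_{i_j})\widetilde{M}\bigr)=(0)$ case by case. Your argument is correct: the $j=0$ case is Proposition~\ref{G80}; the case $i\ge 1$ follows because the kernel and cokernel of $M/JM\to\widetilde{M}/J\widetilde{M}$ have finite length (being subquotients of $\Tor$'s of the finite-length module $\widetilde{M}/M$); and in the delicate case $i=0$ you correctly bootstrap back to $M$ by applying Proposition~\ref{independentold} to the standard system $a_1,\dots,a_l^N,\dots,a_s$, which is legitimate since the definition of standard is closed under taking powers and Proposition~\ref{independentold} is stated for an arbitrary standard system. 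What you gain is a self-contained argument that avoids the external reference; what the paper's route gains is brevity, since the result from \cite{KY} packages exactly this transfer principle.
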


\begin{proof}
It is routine to check the equality $\widetilde{\Sigma}(a_1,a_2,\dots ,a_s; M)=\widetilde{\Sigma}(a_1,a_2,\dots ,a_s; \widetilde{M})$ by the lemma above. For the last assertion, see \cite[Appendix, Proposition 5]{KY}. 
\end{proof}

Corollary \ref{independent} implies the following result (notice that we assume $s=\dim M\ge 2$ in this paper).

\begin{cor}\label{independentCM}
The equality $\widetilde{\Sigma}(a_1,a_2,\dots ,a_s; M)=(a_1,a_2,\dots ,a_s)\widetilde{M}$ holds whenever the $A$-module $\widetilde{M}$ is Cohen-Macaulay.
\end{cor}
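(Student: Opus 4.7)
The plan is to reduce everything to $\widetilde{M}$ using the preceding corollary and then exploit the Cohen–Macaulay property.

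First I would invoke Corollary \ref{independent} to rewrite
\[
\widetilde{\Sigma}(a_1,\dots,a_s; M)=\widetilde{\Sigma}(a_1,\dots,a_s; \widetilde{M}),
\]
so it suffices to prove the equality $\widetilde{\Sigma}(a_1,\dots,a_s; \widetilde{M})=(a_1,\dots,a_s)\widetilde{M}$.

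Next I would observe that $\dim_A \widetilde{M}=s$ (since $\widetilde{M}/M$ has finite length, in fact $\widetilde{M}/M\cong \H_\m^1(M)$ as established above), and that $a_1,a_2,\dots,a_s$ is a standard, hence genuine, system of parameters for $\widetilde{M}$ by Corollary \ref{independent}. Because $\widetilde{M}$ is assumed Cohen–Macaulay of dimension $s$, any system of parameters is a regular sequence on $\widetilde{M}$, and in fact a regular sequence in every order. In particular, fixing any $i$ and permuting $a_i$ to the last position, the element $a_i$ is a nonzerodivisor on $\widetilde{M}/(a_1,\dots,\widehat{a_i},\dots,a_s)\widetilde{M}$, which gives
\[
(a_1,\dots,\widehat{a_i},\dots,a_s)\widetilde{M}:_{\widetilde{M}} a_i \;=\; (a_1,\dots,\widehat{a_i},\dots,a_s)\widetilde{M}.
\]

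Summing these equalities over $i=1,2,\dots,s$ yields
\[
\widetilde{\Sigma}(a_1,\dots,a_s; \widetilde{M})=\sum_{i=1}^{s}(a_1,\dots,\widehat{a_i},\dots,a_s)\widetilde{M}=(a_1,\dots,a_s)\widetilde{M},
\]
which combined with the first step completes the proof. There is no real obstacle here: the entire content of the corollary is that $\widetilde{\Sigma}$ collapses when computed over a Cohen–Macaulay overmodule, and the two inputs needed (the coincidence of $\widetilde{\Sigma}$ for $M$ and $\widetilde{M}$, and the regularity of a parameter sequence on a Cohen–Macaulay module under arbitrary reordering) are already in place.
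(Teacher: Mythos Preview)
Your proof is correct and follows exactly the route the paper intends: the paper's own ``proof'' is just the remark that the result follows from Corollary~\ref{independent} (together with the standing hypothesis $s\ge 2$), and you have simply written out those details. The only point worth making explicit is that the final identity $\sum_{i=1}^{s}(a_1,\dots,\widehat{a_i},\dots,a_s)\widetilde{M}=(a_1,\dots,a_s)\widetilde{M}$ uses $s\ge 2$, which is precisely the parenthetical assumption the paper flags.
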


From now on, we consider the case where $M=A$. Notice that $\widetilde{A}$ is an overring of $A$ (cf. \cite{G80}). In what follows, we assume that $d=\dim A\ge 2$, $\depth A>0$, and $a_1,a_2,\dots ,a_d$ is a standard system of parameters for $A$. Then $\ell_A (\H^i_\m(A))<\infty$ for all integers $i$ and the ring $\widetilde{A}$ is just the ordinary $(S_2)$-ification of $A$ by Corollary \ref{S2} and Corollary \ref{minimality}. We put 
\begin{center}
$\fkq=(a_1,a_2,\dots ,a_d)A$, $\fka =\widetilde{\Sigma}(a_1,a_2,\dots ,a_d; A)$, and $\fkc=(0):_{A}\H^1_\m(A)$. 
\end{center}
Then $\fka$ is a common ideal of $A$ and $\widetilde{A}$ by Corollary \ref{independent}, and hence $\fka\subseteq \fkc$ (recall that $\fkc=A:\widetilde{A}$, since $\H^1_\m(A)\cong\widetilde{A}/A$ as an $A$-module).

\begin{prop}\label{proj}
Assume that $\H^i_\m(A)=(0)$ for $i\neq 1,d$, $\depth A=1$, and $\mathrm{r}_A(A)=1$. Then the following two conditions are equivalent.
\begin{enumerate}
	\item[$(1)$] $\fka =\fkc$.
	\item[$(2)$] $\e_\fkc (A)=2\ell_A (A/\fkc)$ and $\q$ is a reduction of $\fkc$.
\end{enumerate}
When this is the case, the equality $\fkc=\q\widetilde{A}$ holds and the ring $A/(a_1,a_2,\dots, a_{d-1})\widetilde{A}$ is Gorenstein.
\end{prop}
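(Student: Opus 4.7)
The plan is to interpret everything through the $(S_2)$-ification $\widetilde{A}$, which is Cohen-Macaulay of dimension $d$ by Corollary \ref{1980}. Corollary \ref{independentCM} gives $\fka=\q\widetilde A$, and since $\fkc=A:\widetilde{A}$ is an $\widetilde A$-ideal (so $\fkc\widetilde A=\fkc$) containing $\q$, one always has $\fka\subseteq\fkc$. A length computation using that $a_1,\ldots,a_d$ is a regular sequence on $\widetilde A$, that $\widetilde A/A\cong\H^1_\fkm(A)$ has finite length (so $\e_\q(A)=\e_\q(\widetilde A)$), and the filtration $\fka\subseteq A\subseteq\widetilde A$ yields the central identity
\begin{equation*}
\e_\q(A) \;=\; \e_\q(\widetilde A) \;=\; \ell_A(\widetilde A/\q\widetilde A) \;=\; \ell_A(\H^1_\fkm(A)) + \ell_A(A/\fka).
\end{equation*}
Moreover, $\widetilde A$ being module-finite over $A$ makes $\q$ a reduction of $\fka=\q\widetilde A$, so $\e_\q(A)=\e_\fka(A)$ holds automatically.

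The key analytic ingredient is the Matlis-duality equality $\ell_A(\H^1_\fkm(A))=\ell_A(A/\fkc)$. For a non-zero-divisor $x\in\fkc$, multiplication by $x$ on $A$ combined with $x\cdot\H^1_\fkm(A)=0$ yields $\H^1_\fkm(A)\cong\H^0_\fkm(A/xA)$, so $\Soc(\H^1_\fkm(A))\cong\Soc(A/xA)\cong k$ by $\mathrm{r}_A(A)=1$. The Matlis dual $D_A(\H^1_\fkm(A))$ then has one minimal generator (via the identity $\mu_A(D_A M)=\dim_k\Soc M$) and annihilator $\fkc=\Ann_A\H^1_\fkm(A)$, hence $D_A(\H^1_\fkm(A))\cong A/\fkc$; double-dualising gives $\H^1_\fkm(A)\cong E_{A/\fkc}(k)$, whose length is $\ell_A(A/\fkc)$.

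Combining the two, the central identity simplifies to $\e_\q(A)=\ell_A(A/\fkc)+\ell_A(A/\fka)$, making $(1)\Leftrightarrow(2)$ immediate. Under $(1)$, $\fka=\fkc$ gives $\e_\q(A)=2\ell_A(A/\fkc)$; since $\q$ is a reduction of $\fka=\fkc$ by module-finiteness, $\e_\fkc(A)=\e_\q(A)=2\ell_A(A/\fkc)$, which is $(2)$. Conversely under $(2)$, $\e_\fkc(A)=2\ell_A(A/\fkc)$ with $\q$ a reduction of $\fkc$ forces $\e_\q(A)=2\ell_A(A/\fkc)$, and the simplified identity then yields $\ell_A(A/\fka)=\ell_A(A/\fkc)$; combined with $\fka\subseteq\fkc$ this gives $\fka=\fkc$, i.e.\ $(1)$.

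For the concluding Gorenstein assertion, $A/(a_1,\ldots,a_{d-1})\widetilde A$ is a one-dimensional quotient of $A$, Cohen-Macaulay because $a_d$ is a non-zero-divisor inherited from the $\widetilde A$-regularity of $a_1,\ldots,a_d$, and modding by $a_d$ recovers $A/\fka=A/\fkc$ under $(1)$. Gorensteinness then reduces to showing $A/\fkc$ is an Artinian Gorenstein local ring, equivalently $\mu_A(\H^1_\fkm(A))=1$ (so that $\H^1_\fkm(A)\cong E_{A/\fkc}(k)$ is cyclic over $A$, making $\dim_k\Soc(A/\fkc)=1$). This is the principal obstacle in the plan: I expect to establish it by exploiting the constraint $\fkc=\q\widetilde A$ to show that $\widetilde A$ is generated over $A$ by a single extra element, plausibly through a canonical-module calculation in the spirit of the machinery developed in Section 3.
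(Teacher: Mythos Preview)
Your argument for the equivalence $(1)\Leftrightarrow(2)$ is correct and essentially identical to the paper's: both rest on $\fka=\q\widetilde A$, the Cohen--Macaulayness of $\widetilde A$, the length identity $\ell_A(\H^1_\fkm(A))=\ell_A(A/\fkc)$ coming from $\mathrm r_A(A)=1$, and the comparison of $\e_\q(A)=\ell_A(\widetilde A/\q\widetilde A)$ against $\ell_A(\widetilde A/\fkc)=2\ell_A(A/\fkc)$.

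The Gorenstein assertion, however, contains a concrete error. Modding $A/(a_1,\dots,a_{d-1})\widetilde A$ out by $a_d$ does \emph{not} recover $A/\fka$: it yields
\[
B \;=\; A\big/\big((a_1,\dots,a_{d-1})\widetilde A + a_d A\big),
\]
and since $a_d\widetilde A\not\subseteq a_dA$ (as $A\neq\widetilde A$), this ring strictly surjects onto $A/\q\widetilde A=A/\fkc$. In fact the kernel $I$ of $B\twoheadrightarrow A/\fkc$ is $\q\widetilde A/((a_1,\dots,a_{d-1})\widetilde A+a_dA)\cong a_d\widetilde A/a_dA\cong\widetilde A/A\cong\H^1_\fkm(A)$, so $\ell_A(B)=2\ell_A(A/\fkc)$, not $\ell_A(A/\fkc)$. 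Consequently your reduction to ``$A/\fkc$ is Gorenstein'' is aimed at the wrong ring, and the obstacle you flag (showing $\mu_A(\H^1_\fkm(A))=1$) is not what is actually needed.

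The paper's route avoids this entirely. One already knows $[I]^\vee\cong[\H^1_\fkm(A)]^\vee\cong A/\fkc=B/I$ from the computation you carried out. The paper then invokes an elementary self-duality lemma for Artinian local rings: if an ideal $I$ of $B$ satisfies $[I]^\vee\cong B/I$, then the short exact sequence $0\to I\to B\to B/I\to 0$ is isomorphic to its own Matlis dual, forcing $B\cong[B]^\vee$ and hence $B$ Gorenstein. No cyclicity of $\H^1_\fkm(A)$, and no Gorensteinness of $A/\fkc$, is required.
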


Before proving the proposition above, let us state the following two general lemmas.

\begin{lem}\label{r=1}
If $\depth A=1$ and $\mathrm{r}_A(A)=1$, then 
$\ell_A (\widetilde{A}/\fkc)=2\ell_A (A/\fkc)=2\ell_A (\widetilde{A}/A)$.
\end{lem}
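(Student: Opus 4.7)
The plan is to deduce both equalities from the single length identity $\ell_A(\H^1_\m(A)) = \ell_A(A/\fkc)$. Indeed, from Section 2 one has $\widetilde{A}/A \cong \H^1_\m(A)$ as $A$-modules and $\fkc = (0):_A\H^1_\m(A)$, while the exact sequence $0\to A/\fkc\to \widetilde{A}/\fkc\to \widetilde{A}/A\to 0$ yields $\ell_A(\widetilde{A}/\fkc)=\ell_A(A/\fkc)+\ell_A(\widetilde{A}/A)$. Combined with the target identity, this gives the full chain of equalities in the lemma.

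Write $H=\H^1_\m(A)$, which is an $A$-module of finite length by the standing hypotheses of the section. The first step is to identify the socle length of $H$. Since $\depth A=1$, one has $\H^0_\m(A)=0$, and the standard identification $\Ext^{\depth A}_A(A/\m, A)\cong \Hom_A(A/\m, \H^{\depth A}_\m(A))$ specialises to $\Ext^1_A(A/\m, A)\cong \Soc H$. Taking lengths and invoking $\mathrm{r}_A(A)=1$ shows $\ell_A(\Soc H)=1$.

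The second step is Matlis duality. Let $E=\mathrm{E}_{\widehat{A}}(A/\m)$ and $(-)^{\vee}=\Hom_{\widehat{A}}(-,E)$. Since $H$ is an $\m$-torsion module of finite length with simple socle, it embeds in $E$; dualising produces a surjection $\widehat{A}=E^{\vee}\twoheadrightarrow H^{\vee}$, so $H^{\vee}$ is a cyclic $\widehat{A}$-module, say $H^{\vee}\cong \widehat{A}/J$. Matlis duality preserves both length and annihilator on finite-length modules, so $J=\Ann_{\widehat{A}}H^{\vee}=\Ann_{\widehat{A}}H=\fkc\widehat{A}$, and hence
$$\ell_A(H)=\ell_{\widehat{A}}(H^{\vee})=\ell_{\widehat{A}}(\widehat{A}/\fkc\widehat{A})=\ell_A(A/\fkc),$$
which is exactly the identity required.

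The only mildly delicate ingredient is the isomorphism $\Ext^1_A(A/\m,A)\cong \Soc\H^1_\m(A)$ used in the first step; this is a well-known consequence of $\depth A=1$. Everything else is routine bookkeeping once the type hypothesis is translated into the statement that $H$ is cocyclic.
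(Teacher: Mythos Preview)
Your proof is correct and follows essentially the same route as the paper's: both arguments show that $\mathrm{r}_A(A)=1$ with $\depth A=1$ forces $\Soc\H^1_\m(A)$ to be simple, then use Matlis duality to conclude $[\H^1_\m(A)]^\vee\cong A/\fkc$ and hence $\ell_A(\widetilde{A}/A)=\ell_A(A/\fkc)$, finishing with the short exact sequence $0\to A/\fkc\to\widetilde{A}/\fkc\to\widetilde{A}/A\to 0$. You simply spell out in more detail the two steps the paper compresses into one line (the identification $\Ext^1_A(A/\m,A)\cong\Soc\H^1_\m(A)$ and the passage from cocyclic to cyclic under duality).
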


\begin{proof}
Since $\mathrm{r}_A(A)=1$, we have $\mathrm{r}_A(\H_\m^1(A))=1$ because $\depth A=1$. Hence, $$[\H_\m^1(A)]^\vee\cong A/\fkc$$ as an $A$-module, where $[\quad]^\vee$ be the Matlis dual functor with respect to $A$.
Therefore, $\ell_A (A/\fkc)=\ell_A (\widetilde{A}/A)$, as $\H_\m^1(A)\cong \widetilde{A}/A$. Look at the natural exact sequence $$0\to A/\fkc\to \widetilde{A}/\fkc\to \widetilde{A}/A\to 0$$ of $A$-modules, and we see the equality $\ell_A (\widetilde{A}/\fkc)=2\ell_A (A/\fkc)$.
\end{proof}

\begin{lem}\label{d=0}Let $B$ be an Artinian local ring and $I$ an ideal of $B$. Let $[\quad]^\vee$ be the Matlis dual functor with respect to $B$. Then the following two conditions are equivalent.
\begin{enumerate}
	\item[$(1)$] $[I]^\vee\cong B/I$ as a $B$-module.
	\item[$(2)$] The natural exact sequence $0\to I\xrightarrow{i} B\xrightarrow{\epsilon} B/I\to 0$ is isomorphic to its dual sequence $0\to [B/I]^\vee\xrightarrow{\epsilon^\vee} [B]^\vee\xrightarrow{i^\vee} [I]^\vee\to 0$.
\end{enumerate}
When this is the case, the ring $B$ is Gorenstein.
\end{lem}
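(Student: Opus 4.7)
The plan is to dispose of $(2) \Rightarrow (1)$ at once by reading off the third terms of the two exact sequences, and to observe that $(2)$ also yields $B \cong [B]^\vee = E_B(B/\mathfrak{m})$ from the middle terms, which is the standard criterion for an Artinian local ring to be Gorenstein. The substance of the lemma is the reverse implication $(1) \Rightarrow (2)$, which I would reduce to the single assertion that $(1)$ already forces $B$ to be Gorenstein. Once this is in hand, fix an isomorphism $\psi\colon E_B(B/\mathfrak{m}) \xrightarrow{\sim} B$ coming from self-injectivity; the canonical identifications $[B/I]^\vee = (0:_E I)$ and $[I]^\vee = E/(0:_E I)$ are transported by $\psi$ to $(0:_B I)$ and $B/(0:_B I)$ respectively, and the self-annihilation $(0:_B I) = I$ deduced below upgrades these to a commutative diagram of $B$-module isomorphisms between the dual sequence and the original short exact sequence.

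For the key implication $(1) \Rightarrow B$ Gorenstein, the extremes $I = 0$ and $I = B$ are ruled out by $(1)$ since $B \neq 0$, so I may assume $0 \subsetneq I \subseteq \mathfrak{m}$. First, using that Matlis duality preserves annihilators over an Artinian local ring (as $E_B(B/\mathfrak{m})$ is a faithful cogenerator), I compare annihilators in the given isomorphism $[I]^\vee \cong B/I$ and obtain $\Ann_B I = \Ann_B(B/I) = I$, so $I$ is self-annihilating. Second, by Matlis reflexivity, dualizing the same isomorphism gives $I \cong [B/I]^\vee = (0:_E I)$, where $E = E_B(B/\mathfrak{m})$. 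Since $I \subseteq \mathfrak{m}$, I compute $\Soc\bigl((0:_E I)\bigr) = (0:_E I) \cap (0:_E \mathfrak{m}) = (0:_E \mathfrak{m}) = \Soc(E)$, which is one-dimensional; hence $\Soc(I)$ is one-dimensional.

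Finally, the self-annihilation of $I$ forces $\Soc(B) \subseteq (0:_B \mathfrak{m}) \subseteq (0:_B I) = I$, so $\Soc(B) \subseteq \Soc(I)$ is at most one-dimensional and therefore exactly one-dimensional, proving $B$ is Gorenstein. The main obstacle is precisely this socle squeeze: the bare isomorphism $[I]^\vee \cong B/I$ only records equalities of lengths and of annihilators, and it is the conjunction of $\Ann_B I = I$ with the reflexivity computation $\Soc(I) \cong \Soc(E)$ that actually pins down the one-dimensionality of $\Soc(B)$; without bundling these two outputs together one would not be able to cross from numerical data to a genuine Gorenstein structure.
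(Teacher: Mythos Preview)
Your proof is correct, but it takes a genuinely different route from the paper's. The paper establishes $(1)\Rightarrow(2)$ \emph{directly}: given an isomorphism $h\colon B/I\to [I]^\vee$, it lifts $h(1)$ along the surjection $i^\vee\colon [B]^\vee\to [I]^\vee$ to an element $x\in [B]^\vee$, defines the middle map $g\colon B\to [B]^\vee$ by $1\mapsto x$, and takes the left-hand map $f\colon I\to [B/I]^\vee$ to be the composite of the biduality isomorphism $I\xrightarrow{\sim}[I]^{\vee\vee}$ with $h^\vee$. A routine check shows the resulting ladder commutes, and since $f$ and $h$ are isomorphisms the five lemma forces $g$ to be one as well; Gorensteinness of $B$ then drops out of $(2)$ via $B\cong [B]^\vee$.

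Your argument reverses the logical order: you first extract Gorensteinness from $(1)$ alone by the annihilator identity $\Ann_B I=I$ together with the socle computation $\Soc(I)\cong\Soc(E)$, and only afterwards use the resulting isomorphism $E\cong B$ (plus the self-annihilation $(0:_BI)=I$) to identify the dual sequence with the original. This is longer but has the merit of explaining \emph{why} the bare module isomorphism $[I]^\vee\cong B/I$ is already strong enough to pin down the socle of $B$, whereas the paper's lifting argument is slicker but leaves that mechanism implicit. Both arguments ultimately rely on Matlis reflexivity over an Artinian local ring; the paper uses it to produce $f$, while you use it to transport socle information.
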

\begin{proof}
Let us only show the implication $(1)\Rightarrow (2)$. Let $h:B/I\to [I]^\vee$ be an isomorphism given by the condition $(1)$. Take an element $x\in [B]^\vee$ such that $i^\vee(x)=h(1)$. Then we define an $B$-linear map $g:B\to [B]^\vee$ carrying $1_B$ to $x$. Let $f$ be the composition of the natural map $\theta :I\xrightarrow{\sim}[I]^{\vee\vee}$ and the dual map $h^\vee:[I]^{\vee\vee} \xrightarrow{{\sim}}[B/I]^\vee$ of $h$. Then $f$ is an isomorphism of $B$-modules. It is routine to check the following diagram is commutative. 
$$
\begin{CD}
0@>>> I@>i>> B@>\epsilon>> B/I@>>> 0\\
@.@Vf V\wr V@Vg VV@Vh V\wr V\\
0@>>> [B/I]^\vee@>\epsilon^\vee>> [B]^\vee@>i^\vee>> [I]^\vee@>>> 0.
\end{CD}
$$ 
Therefore, we have $g$ is an isomorphism of $B$-modules.
\end{proof}

\begin{proof}[Proof of Proposition \ref{proj}]
The ring $\widetilde{A}$ is a Cohen-Macaulay $A$-module by Proposition \ref{G80}, so that 
$\fka=\q\widetilde{A}$ by Corollary \ref{independentCM}, and then we see $\q$ is a reduction of $\fka$.
Since $\e_\q (A)=\e_\q (\widetilde{A})$, we get $\e_\q (A)=\ell_A (\widetilde{A}/\q\widetilde{A})$.

$(1)\Rightarrow (2)$. Since $\fkc=\fka$, the ideal $\q$ is a reduction of $\fkc$, and then $\e_\fkc (A)=\e_\q (A)=\ell_A (\widetilde{A}/\q\widetilde{A})$. Since $\q\widetilde{A}=\fkc$, we get $\e_\fkc (A)=2\ell_A (A/\fkc)$ by Lemma \ref{r=1}.

$(2)\Rightarrow (1)$. Since $\e_\q (A)=2\ell_A (A/\fkc)$, we get $\ell_A (\widetilde{A}/\q\widetilde{A})=\ell_A (\widetilde{A}/\fkc)$ by Lemma \ref{r=1}. Therefore, $\fka=\fkc$ because $\fka=\q\widetilde{A}$.

For the last assertion, the equality $\fkc=\q\widetilde{A}$ directly follows from the condition (1). Put $\q_{d-1}=(a_1,a_2,\dots, a_{d-1})A$ and $B=A/\q_{d-1}\widetilde{A}+a_dA$. In order to prove the ring $A/\q_{d-1}\widetilde{A}$ is Gorenstein, it is enough to show that so is the ring $B$, as $a_d$ is a regular element on $A/\q_{d-1}\widetilde{A}$.
Let $I$ be the kernel of the natural surjection 
$
B\to A/\q\widetilde{A}
$
of rings. Then 
$$
I=\q\widetilde{A}/\q_{d-1}\widetilde{A}+a_dA\cong a_d\widetilde{A}/\q_{d-1}\widetilde{A}\cap a_d\widetilde{A}+a_dA\cong a_d\widetilde{A}/a_dA\cong \widetilde{A}/A\cong \H_\m^1(A)
$$ 
as an $A$-module because $\q_{d-1}\widetilde{A}\cap a_d\widetilde{A}=a_d\q_{d-1}\widetilde{A}\subseteq a_dA$. 

Let $[\quad]^\vee$ be the Matlis dual functor with respect to $A$. Since $\mathrm{r}_A(\H_\m^1(A))=1$, we have $[\H_\m^1(A)]^\vee\cong A/\fkc$ as an $A$-module, so that $[I]^\vee\cong A/\fkc$ as an $A$-module. 
Since $\fkc=\q\widetilde{A}$, we get $[I]^\vee\cong A/\q\widetilde{A}$ as an $A$-module. Thanks to Lemma \ref{d=0}, the ring $B$ is Gorenstein.
\end{proof}

Proposition \ref{proj} yields the next result.

\begin{thm}\label{gor}
Assume that $\H^i_\m(A)=(0)$ for $i\neq 1,d$ and $\ell_A (\H^1_\m(A))<\infty$. Then the ring $\widetilde{A}$ is Gorenstein if $\depth A=1$, $\mathrm{r}_A(A)=1$, and $\e_\fkc (A)=2\ell_A (A/\fkc)$.\end{thm}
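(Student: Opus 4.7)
The plan is first to reduce the Gorensteinness of $\widetilde{A}$ to a one-dimensional statement via a regular sequence, and then to produce an explicit isomorphism $S \xrightarrow{\sim} \K_S$ using a conductor computation whose principalness is guaranteed by Proposition \ref{proj}.

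First I would choose a standard system of parameters $a_1,\ldots,a_d$ of $A$ such that $\q = (a_1,\ldots,a_d)$ is a reduction of $\fkc$; write $\q_{d-1} = (a_1,\ldots,a_{d-1})$. The ideal $\fkc$ is $\m$-primary because $\depth A = 1 < d$ forces $\H^1_\m(A) \ne (0)$; after passing to a faithfully flat extension with infinite residue field, a minimal reduction of $\fkc$ is generated by $d$ elements forming a system of parameters, and the assumption $\H^i_\m(A) = (0)$ for $i \ne 1, d$ implies that any such system of parameters contained in $\fkc$ is automatically standard. With this choice, Proposition \ref{proj} supplies the two crucial outputs $\fkc = \q\widetilde{A}$ and that the ring $R := A/\q_{d-1}\widetilde{A}$ is a one-dimensional Gorenstein local ring. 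Since $\widetilde{A}$ is Cohen--Macaulay of dimension $d$ by Corollary \ref{1980} and $a_1,\ldots,a_{d-1}$ is a regular sequence on it, $\widetilde{A}$ is Gorenstein if and only if $S := \widetilde{A}/\q_{d-1}\widetilde{A}$ is Gorenstein.

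Next I would produce the isomorphism $\K_S \cong S$. Because $R \subseteq S$ is a finite extension with $R$ Gorenstein, the canonical module of $S$ is $\K_S = \Hom_R(S, R)$. From $a_d \in \fkc = A\colon \widetilde{A}$ we get $a_d\widetilde{A} \subseteq A$, hence $a_d S \subseteq R$, and the rule
\[
\Phi(s)(t) = a_d\, s\, t
\]
defines an $S$-linear map $\Phi : S \to \K_S$. Injectivity is immediate from the regular-sequence property of $a_1,\ldots,a_d$ on $\widetilde{A}$: if $\Phi(s) = 0$ then $a_d s \in \q_{d-1}\widetilde{A}$, whence $s \in \q_{d-1}\widetilde{A}$, i.e., $s = 0$ in $S$. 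To deduce surjectivity I would pass modulo $a_d$: both $S/a_dS$ and $\K_S/a_d\K_S$ have the same finite $R$-length $\e_{a_d}(S)$, so injectivity of the induced map $\overline{\Phi}$ will upgrade to an isomorphism, and Nakayama's lemma applied to $\Coker \Phi$ (using that $a_d$ lies in the Jacobson radical of $S$) will then force $\Phi$ itself to be an isomorphism.

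The main obstacle is the computation of $\ker \overline{\Phi}$. Unwinding the definitions, $\ker \overline{\Phi} = \mathfrak{f}/a_d S$ where $\mathfrak{f} = \{s \in S : sS \subseteq R\}$. Lifting to $\widetilde{A}$ and using $\q_{d-1}\widetilde{A} \subseteq A$, an element of $S$ lies in $\mathfrak{f}$ exactly when its lift $\tilde{a}\in\widetilde{A}$ satisfies $\tilde{a}\widetilde{A} \subseteq A$, i.e., $\tilde{a} \in A\colon \widetilde{A} = \fkc$. Therefore $\mathfrak{f} = \fkc/\q_{d-1}\widetilde{A}$. The equality $\fkc = \q\widetilde{A} = a_d\widetilde{A} + \q_{d-1}\widetilde{A}$ from Proposition \ref{proj}, together with the intersection formula $a_d\widetilde{A} \cap \q_{d-1}\widetilde{A} = a_d\q_{d-1}\widetilde{A}$ coming from the regular-sequence property on $\widetilde{A}$, then collapses $\mathfrak{f}$ to $a_d\widetilde{A}/a_d\q_{d-1}\widetilde{A} = a_d S$. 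Hence $\ker \overline{\Phi} = 0$, $\Phi$ is an isomorphism, $\K_S \cong S$, and $\widetilde{A}$ is Gorenstein.
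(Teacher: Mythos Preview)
Your proof is correct and follows the same overall architecture as the paper's: pass to infinite residue field, pick a standard system of parameters $a_1,\dots,a_d$ generating a reduction of $\fkc$, invoke Proposition~\ref{proj} to get both $\fkc=\q\widetilde{A}$ and the Gorensteinness of $R=A/\q_{d-1}\widetilde{A}$, and then reduce the Gorensteinness of $\widetilde{A}$ to $\K_S\cong S$ for $S=\widetilde{A}/\q_{d-1}\widetilde{A}$.

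The execution of that last isomorphism differs. The paper dualizes the exact sequence $0\to R\to S\to \widetilde{A}/A\to 0$ over the one-dimensional Gorenstein ring $R$, obtaining $0\to\K_S\to R\to[\widetilde{A}/A]^\vee\to 0$; it then uses $\mathrm r_A(A)=1$ to identify $[\widetilde{A}/A]^\vee\cong A/\fkc$, matches this with the natural sequence $0\to \q\widetilde{A}/\q_{d-1}\widetilde{A}\to R\to A/\q\widetilde{A}\to 0$, and reads off $\K_S\cong a_dS\cong S$. You instead build the map $\Phi:S\to\K_S=\Hom_R(S,R)$, $\Phi(s)(t)=a_dst$, directly and prove it is an isomorphism by computing the conductor $R:_SS=\fkc/\q_{d-1}\widetilde{A}=a_dS$, so that $\ker\overline{\Phi}=0$; the length equality $\ell(S/a_dS)=\ell(\K_S/a_d\K_S)$ then finishes (this follows from $\K_S/a_d\K_S\cong\Hom_{R/a_dR}(S/a_dS,R/a_dR)$ and Matlis duality over the Artinian Gorenstein ring $R/a_dR$). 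The two arguments are dual to each other: the paper embeds $\K_S$ into $R$ and identifies the image as $a_dS$, while you embed $S$ into $\K_S$ via multiplication by $a_d$ and show the image is everything. In both cases the decisive input is $\fkc=\q\widetilde{A}$, which makes the conductor of $S$ over $R$ principal. Your route is slightly more explicit and avoids invoking the isomorphism $[\widetilde{A}/A]^\vee\cong A/\fkc$ separately, at the cost of the extra Nakayama/length step; the intersection formula you mention is in fact not needed, since $\mathfrak f=(\q\widetilde{A})/\q_{d-1}\widetilde{A}=a_dS$ already on the nose.
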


\begin{proof}
We may assume the field $A/\m$ is infinite. Then we can find a standard system $b_1,b_2,\dots , b_d$ of parameters for $A$ such that $(b_1,b_2,\dots , b_d)A$ is a reduction of the ideal $\fkc$ (cf. \cite[Corollary 3.7]{T}). We put $\q_{d-1}=(b_1, b_2, \dots , b_{d-1})A$ and $\q_{d}=(b_1, b_2, \dots , b_{d})A$. Then the ring $A/\q_{d-1}\widetilde{A}$ is a Gorenstein local ring by Proposition \ref{proj}. 
Taking the $A/\q_{d-1}\widetilde{A}
$-dual of the natural exact sequence 
$$
0\to A/\q_{d-1}\widetilde{A} \to \widetilde{A}/\q_{d-1}\widetilde{A}\to \widetilde{A}/A\to 0
$$ 
of $A$-modules, we get the exact sequence 
$$
0\to\K_{\widetilde{A}/\q_{d-1}\widetilde{A}} \to A/\q_{d-1}\widetilde{A}\to [\widetilde{A}/A]^\vee\to 0
$$ 
of $A$-modules, where $[\quad]^\vee$ is the Matlis dual functor with respect to $A$ and $\K_{\widetilde{A}/\q_{d-1}\widetilde{A}}$ is the canonical module of the ring $\widetilde{A}/\q_{d-1}\widetilde{A}$. Since $\mathrm{r}_A(A)=1$, $[\widetilde{A}/A]^\vee\cong A/\fkc$ as an $A$-module, so that we get an exact sequence
$$
0\to\K_{\widetilde{A}/\q_{d-1}\widetilde{A}} \to A/\q_{d-1}\widetilde{A}\to A/\fkc\to 0
$$
of $A$-modules. This is isomorphic to the natural exact sequence 
$$
0\to\q_{d}\widetilde{A}/\q_{d-1}\widetilde{A} \to A/\q_{d-1}\widetilde{A}\to A/\q_{d}\widetilde{A}\to 0
$$
of $A$-modules, as $\fkc = \q_{d}\widetilde{A}$ by Proposition \ref{proj}. We have 
$$
\q_{d}\widetilde{A}/\q_{d-1}\widetilde{A}\cong b_d\widetilde{A}/\q_{d-1}\widetilde{A}\cap b_d\widetilde{A} \cong\widetilde{A}/\q_{d-1}\widetilde{A}
$$ 
as an $A$-module, since $\q_{d-1}\widetilde{A}\cap b_d\widetilde{A}= b_d\q_{d-1}\widetilde{A}$. Therefore, $\K_{\widetilde{A}/\q_{d-1}\widetilde{A}}\cong \widetilde{A}/\q_{d-1}\widetilde{A}$ as an $A$-module. This is an isomorphism of $\widetilde{A}$-modules, and thus the ring $\widetilde{A}$ is Gorenstein.
\end{proof}

We set $R=\R(\q)$ and $S=\R(\q\widetilde{A})$. 
Let $A[X_1,X_2,\dots X_d]$ be the $\Z$-graded polynomial ring over $A$ such that $\deg X_i=1$ for all $1\le i\le d$ and $\deg a=0$ for all $a\in A$. Let $\varphi :A[X_1,X_2,\dots X_d]\to R$ be the graded homomorphism of $A$-algebras such that $\varphi (X_i)=a_it$ for all $1\le i\le d$. 
With this notation, we will close this section with a consequence of Proposition \ref{independentold} as given below.

\begin{cor}\label{independent2}
$S/R\cong A[X_1,X_2,\dots X_d]\otimes_A\widetilde{A}/A$ as a graded $A[X_1,X_2,\dots X_d]$-module.
\end{cor}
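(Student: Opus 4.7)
The plan is to build an explicit graded $A[X_1,\dots,X_d]$-linear map
$\Phi\colon A[X_1,\dots,X_d]\otimes_A\widetilde{A}/A\to S/R$
defined on pure tensors by $f\otimes\bar{u}\mapsto u\varphi(f)\bmod R$, and to argue that it is an isomorphism. Well-definedness is immediate since $u\in A$ forces $u\varphi(f)=\varphi(uf)\in R$; the map is graded and $A[X_1,\dots,X_d]$-linear by construction. Surjectivity in each degree $n$ will follow at once from $S_n=\fkq^n\widetilde{A}=\widetilde{A}\cdot\varphi(A[X_1,\dots,X_d]_n)$.

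For injectivity, I would apply the snake lemma to the commutative diagram
\[
\begin{CD}
0 @>>> \ker\varphi @>>> A[X] @>\varphi>> R @>>> 0 \\
@. @VVV @VVV @VVV @. \\
0 @>>> \ker\widetilde{\varphi} @>>> \widetilde{A}[X] @>\widetilde{\varphi}>> S @>>> 0
\end{CD}
\]
where $\widetilde{\varphi}\colon\widetilde{A}[X]\to S$ is the natural $\widetilde{A}$-linear extension of $\varphi$ and $X=X_1,\dots,X_d$. The vertical columns are injective with cokernels $(\widetilde{A}/A)\otimes_AA[X]$ and $S/R$ respectively, so the snake lemma yields
\[
0\to\ker\varphi\to\ker\widetilde{\varphi}\to\ker\Phi\to 0,
\]
reducing injectivity of $\Phi$ to showing $\ker\widetilde{\varphi}\subseteq A[X]$.

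The crucial step will be a mild strengthening of Proposition \ref{independentold}: every linear syzygy $(c_1,\dots,c_d)\in\widetilde{A}^d$ of $a_1,\dots,a_d$ in fact has each $c_i\in\fkc$. Indeed, from $\sum c_ia_i=0$ one has $\sum(c_iu)a_i=0$ for every $u\in\widetilde{A}$, so Proposition \ref{independentold} forces $c_iu\in A$; letting $u$ range over $\widetilde{A}$ gives $c_i\cdot\widetilde{A}\subseteq A$, i.e., $c_i\in A:\widetilde{A}=\fkc$. Hence $\ker\widetilde{\varphi}_1\subseteq\fkc\cdot A[X]_1$.

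To close the argument I would invoke that $a_1,\dots,a_d$ is a standard system of parameters --- hence a $d$-sequence --- in $\widetilde{A}$ by Corollary \ref{independent}. Huneke's theorem on Rees algebras of ideals generated by a $d$-sequence then identifies $S$ with the symmetric algebra of $\fkq\widetilde{A}$, so $\ker\widetilde{\varphi}$ is generated as an ideal of $\widetilde{A}[X]$ by its degree-one part $\ker\widetilde{\varphi}_1$. Combining this with the amplification above and the defining property $\fkc\cdot\widetilde{A}\subseteq A$ yields
\[
\ker\widetilde{\varphi}=\widetilde{A}[X]\cdot\ker\widetilde{\varphi}_1\subseteq\widetilde{A}[X]\cdot\fkc\cdot A[X]_1\subseteq A[X],
\]
as required. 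The main obstacle will be the appeal to Huneke's theorem to conclude that $\ker\widetilde{\varphi}$ is generated in degree one; everything else is a diagram chase together with the simple trick that elevates Proposition \ref{independentold} from $A$-valued to $\fkc$-valued coefficients.
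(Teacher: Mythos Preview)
Your proof is correct and follows essentially the same route as the paper's: both arguments set up the commutative square relating $\varphi$ and $\widetilde{\varphi}$, reduce the isomorphism to the containment $\Ker\widetilde{\varphi}\subseteq A[X_1,\dots,X_d]$, invoke Huneke--Valla (the $d$-sequence property of $a_1,\dots,a_d$ on $\widetilde{A}$, via Corollary~\ref{independent}) to see that $\Ker\widetilde{\varphi}$ is generated by linear forms, and then appeal to Proposition~\ref{independentold} for the coefficients. Your $\fkc$-refinement is a clean way to make the last step transparent; the paper leaves implicit the observation that $\Ker\widetilde{\varphi}_1$ is itself an $\widetilde{A}$-submodule of $\widetilde{A}[X]_1$, so that Proposition~\ref{independentold} applied to $uL$ for each $u\in\widetilde{A}$ already forces $\widetilde{A}[X]\cdot\Ker\widetilde{\varphi}_1\subseteq A[X]$ --- which is precisely what your multiplication-by-$u$ trick records via the conductor $\fkc=A:\widetilde{A}$.
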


\begin{proof}
Let $\widetilde{\varphi} :\widetilde{A}[X_1,X_2,\dots X_d]\to S$ be the graded homomorphism of $\widetilde{A}$-algebras such that $\varphi (X_i)=a_it$ for all $1\le i\le n$. Then we have the commutative diagram
$$
\begin{CD}
R@>>>S\\
@A\varphi AA@A\widetilde{\varphi} AA\\
A[X_1,X_2,\dots X_d]@>>>\widetilde{A}[X_1,X_2,\dots X_d]
\end{CD}
$$
of $A[X_1,X_2,\dots X_d]$-modules, where the horizontal homomorphisms are inclusion maps. To get the required isomorphism, it is enough to prove $\Ker \varphi \supseteq\Ker \widetilde{\varphi}$. According to \cite{H80} and \cite{V}, $\Ker \widetilde{\varphi}$ is generated by linear forms, since the sequence $a_1,a_2,\dots ,a_d$ is a $d$-sequence on $\widetilde{A}$ by Corollary \ref{independent}. All coefficients of these linear forms are contained in $A$ by Proposition \ref{independentold}. Therefore, $\Ker \varphi \supseteq\Ker \widetilde{\varphi}$.
\end{proof}

%%%%%%%%%%%%%%%%%%%%%%%%%%%%%%%%%%%%%%%%%
%%%%%%%%%%%%%%%%%%%%%%%%%%%%%%%%%%%%%%%%%
\section{Canonical filtrations}
%%%%%%%%%%%%%%%%%%%%%%%%%%%%%%%%%%%%%%%%%
%%%%%%%%%%%%%%%%%%%%%%%%%%%%%%%%%%%%%%%%%

The purpose of this section is to prepare some basic result of canonical filtrations of Rees modules that we need to prove Theorem \ref{main}.

Let $(A, \m)$ be a Noetherian local ring, $I$ an ideal of $A$. Let $\fkM$ stand for the graded maximal ideal of $\R (I)$. We will make use of the standard notation for graded modules as follows.
Let $E$ be a finitely generated graded $\R (I)$-module and let $n\in\mathbb{Z}$.  We denote by $\H_\fkM^n(E)$ the $n^{\th}$ graded local cohomology module of $E$ with respect to $\fkM$. Let $E_n$ stand for the $n^{\th}$ homogeneous component of $E$. Put $E_{\ge n}=\oplus_{m\ge n}E_m$ and $E_+=E_{\ge 1}$. Let $E(n)$ denote the graded $\R (I)$-module whose grading is given by $[E(n)]_m=E_{n+m}$ for all $m\in\mathbb{Z}$. 
For each integer $i$, set $$\mathrm{a}_i (E) = \max \{ n \in \Z \mid [\H^i_\fkM(E)]_n\neq (0) \}.$$ When $s=\dim E$, we denote $\mathrm{a}_s (E)$ simply by $\mathrm{a} (E)$ that is called the $a$-invariant of $E$.

Let $M$ be a finitely generated $A$-module and $\M=\{M_n\}_{n\in\Z}$ stand for a family of $A$-submodules of $M$. Assume that $\M$ is an $I$-filtration of $M$, namely, it satisfies the following three conditions.
\begin{enumerate}
	\item [(i)] $M_{n+1} \subseteq M_n$ for all $n\in \Z$.
	\item [(ii)] $IM_n \subseteq M_{n+1}$ for all $n \in \Z$.
	\item [(iii)] $M_0=M$.
\end{enumerate}
Let $t$ be an indeterminate over $A$.
We define
\begin{enumerate}
	\item[] $\R(\M) = \sum_{n\ge 0}\{t^n\otimes x\mid x\in M_n\}\subseteq A[t]\otimes_A M$ and
	\item[] $\R'(\M) = \sum_{n\in\Z}\{t^n\otimes x\mid x\in M_n\}\subseteq A[t,t^{-1}]\otimes_A M$,
\end{enumerate}
and call them the Rees module of $\M$ and the extended Rees module of $\M$. They are respectively a graded $\R(I)$-submodule of $A[t]\otimes_A M$ and a graded $\R'(I)$-submodule of $A[t,t^{-1}]\otimes_A M$. The graduation of the Rees module (resp. the extended Rees module) is given by  $[\R(\M)]_n =\{t^n\otimes x\mid x\in M_n\}$ if $n\ge 0$ or $[\R(\M)]_n =(0)$ if $n< 0$. (resp. $[\R'(\M)]_n =\{t^n\otimes x\mid x\in M_n\}$ for each $n\in\Z$). 
Let 
$$
\G(\M) = \R'(\M)/t^{-1}\R'(\M)
$$ 
and call it the associated graded module of $\M$.
In the case where $M=A$ and $\M=\{I^n\}_{n\in\Z}$, we set $\R'(I)=\R'(\M)$ and  $\G (I)=\G (\M)$ for simplicity, which are called the associated graded ring of $I$ and the extended Rees algebra of $I$, respectively.

We consider the following exact sequences
$$
\begin{CD}
(\sharp_1)\qquad @.0@>>> \R(\M)_+@>i>>\R(\M)@>p>> M@>>> 0@.\quad \mathrm{and} \\
(\sharp_2)\qquad @.0@>>> \R(\M)_+(1)@>j>>\R(\M)@>\epsilon>> \G(\M)@>>> 0@.\\
\end{CD}
$$
of graded $\R(I)$-modules, where $i$ is the inclusion map,
$p$ is the canonical epimorphism, and the sequence $(\sharp_2)$ is the nonnegative part of the natural exact sequence $$0\to \R'(\M)(1)\xrightarrow{t^{-1}} \R'(\M)\to \G(\M)\to 0$$ of graded $\R'(I)$-modules induced by multiplication by $t^{-1}$ on $\R'(\M)$. 

We begin with the following lemma which is known, but let us give a brief proof for the sake of completeness.

\begin{lem}\label{ai}
Let $i$ be an integer. Assume that $\R(\M)$ is a finitely generated $\R(I)$-module. Then $\mathrm{a}_i(\R(\M))<0$ whenever $\mathrm{a}_i(\G(\M))<0$. 
Furthermore, $\mathrm{a}(\R(\M))=-1$ if $\dim \R(\M)=\dim \G(\M)+1$.
\end{lem}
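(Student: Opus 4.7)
The strategy is to apply graded local cohomology to the two sequences $(\sharp_1)$ and $(\sharp_2)$ and compare degrees, exploiting the fact that $M = \R(\M)/\R(\M)_+$ is concentrated in degree $0$, so $[\H^j_\fkM(M)]_n = 0$ for every $j$ and every $n \ne 0$. Consequently the long exact sequence arising from $(\sharp_1)$ yields an isomorphism
$$[\H^i_\fkM(\R(\M)_+)]_n \cong [\H^i_\fkM(\R(\M))]_n \qquad (i \ge 1,\ n \ne 0).$$

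Under the hypothesis $\mathrm{a}_i(\G(\M)) < 0$, the long exact sequence from $(\sharp_2)$ produces a surjection $[\H^i_\fkM(\R(\M)_+)]_{n+1} \twoheadrightarrow [\H^i_\fkM(\R(\M))]_n$ for every $n \ge 0$. Since $n+1 \ge 1$, composing with the isomorphism above delivers a chain of surjections $[\H^i_\fkM(\R(\M))]_{n+1} \twoheadrightarrow [\H^i_\fkM(\R(\M))]_n$ for every $n \ge 0$. Because $\R(\M)$ is finitely generated, $\H^i_\fkM(\R(\M))$ is Artinian and in particular bounded above, so descending induction along these surjections forces $[\H^i_\fkM(\R(\M))]_n = (0)$ for every $n \ge 0$. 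This establishes $\mathrm{a}_i(\R(\M)) < 0$.

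For the second assertion, set $s = \dim \G(\M)$, so that $\dim \R(\M) = s+1$ and $\H^{s+1}_\fkM(\G(\M)) = (0)$ by Grothendieck vanishing. The first assertion applied to $i = s+1$ already yields $\mathrm{a}(\R(\M)) \le -1$. To upgrade to equality, observe that the vanishing of $\H^{s+1}_\fkM(\G(\M))$ supplies the surjection from $(\sharp_2)$ in \emph{every} degree, while the isomorphism from $(\sharp_1)$ is available whenever $n+1 \ne 0$; together they produce $[\H^{s+1}_\fkM(\R(\M))]_{n+1} \twoheadrightarrow [\H^{s+1}_\fkM(\R(\M))]_n$ for every $n \le -2$ as well. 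If $[\H^{s+1}_\fkM(\R(\M))]_{-1}$ were $(0)$, iterating this surjection downward and combining with the vanishing already established in nonnegative degrees would force $\H^{s+1}_\fkM(\R(\M)) = (0)$, contradicting $\dim \R(\M) = s+1$. Therefore $\mathrm{a}(\R(\M)) = -1$. The main obstacle is precisely this last step: the descending-induction argument only delivers vanishing in nonnegative degrees, so ruling out that the top local cohomology is supported only in degrees strictly below $-1$ requires the downward propagation combined with the dimensional nonvanishing of $\H^{s+1}_\fkM(\R(\M))$.
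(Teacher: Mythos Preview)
Your proof is correct and follows essentially the same route as the paper's own argument: both apply graded local cohomology to $(\sharp_1)$ and $(\sharp_2)$, use the degree-$0$ concentration of $M$ to identify $[\H^i_\fkM(\R(\M)_+)]_n$ with $[\H^i_\fkM(\R(\M))]_n$ for $n\neq 0$, obtain the surjections $[\H^i_\fkM(\R(\M))]_{n+1}\twoheadrightarrow[\H^i_\fkM(\R(\M))]_n$, and finish by descending induction together with the nonvanishing of top local cohomology. Your restriction of the isomorphism from $(\sharp_1)$ to $i\ge 1$ is harmless (the case $i=0$ works identically, and $i<0$ is vacuous), and your justification via Artinianity is a slightly more explicit version of the paper's appeal to vanishing for $n\gg 0$.
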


\begin{proof}
Let $i$ be an integer. The sequences $(\sharp_1)$ and $(\sharp_2)$ above induce two exact sequences 
\begin{enumerate}
	\item[(a)] $\H_\fkM^{i-1}(M)\to\H_\fkM^i(\R(\M)_+)\to\H_\fkM^i(\R(\M))\to\H_\fkM^i(M)$ and 
	\item[(b)] $\H_\fkM^i(\R(\M)_+)(1)\to\H_\fkM^i(\R(\M))\to\H_\fkM^i(\G(\M))$ 
\end{enumerate}
of graded local cohomology modules. For each integer $m\neq 0$, it follows from (a) that $[\H_\fkM^i(\R(\M)_+)]_m\cong [\H_\fkM^i(\R(\M))]_m$ as an $A$-module because $\H_\fkM^{i-1}(M)$ and $\H_\fkM^i(M)$ are concentrated in degree $0$. Then for each integer $n\neq -1$, from (b) we obtain an $A$-homomorphism $f_n:[\H_\fkM^i(\R(\M))]_{n+1}\to[\H_\fkM^i(\R(\M))]_n$. When $\mathrm{a}_i(\G(\M))<0$, the homomorphism $f_n$ is surjective for each integer $n\ge 0$, and therefore $\mathrm{a}_i(\R(\M))<0$, as $[\H_\fkM^i(\R(\M))]_{n}=(0)$ for all integers $n\gg 0$.

For the last assertion, we consider the case where $i=\dim \R(\M)=\dim \G(\M)+1$. Then $\mathrm{a}_i(\G(\M))=-\infty$, and hence $\mathrm{a}_i(\R(\M))< 0$. Suppose that $[\H_\fkM^i(\R(\M))]_{-1}= (0)$. Then we see $\mathrm{a}_i(\R(\M))=-\infty$, since the homomorphism $f_n$ is surjective for each integer $n\le -2$. But this is impossible, as $i=\dim \R(\M)$, and thus we get $\mathrm{a}(\R(\M))=-1$.
\end{proof}

Let $\ell$ be a positive integer and let $I$ be generated by elements $a_1,a_2,\dots ,a_\ell\in A$. In the rest of this section, we consider the case where $I\neq A$ and 
$$
\M=\{I^nM\}_{n\in\Z}.
$$ 
We assume that there exists an $M$-regular element in $I$. Then we may choose $a_1$ is a regular element on $M$. Put $s=\dim M$. Then $\dim\R'(\M)=\dim\R(\M)=s+1$ and $\dim\G(\M)=s$ by \cite[Theorem 4.5.6 and Theorem 4.5.11]{BH}.

In what follows, we assume that the ring $A$ is a homomorphic image of a Gorenstein local ring $C$. Let $\psi : C\to A$ stand for the epimorphism of rings and let $$\P=C[X_1,X_2,\dots X_\ell]$$ be the $\Z$-graded polynomial ring over $C$ such that $\deg X_i=1$ for all $1\le i\le \ell$ and $\deg c=0$ for all $c\in C$. We set $\K_{\P}=\P(-\ell)$ that is the graded canonical module of $\P$. Let $\varphi :\P\to \R(I)$ be the graded homomorphism of $C$-algebras such that $\varphi (X_i)=a_it$ for all $1\le i\le \ell$ and $\varphi (c)=\psi (c)$ for all $c\in C$. 
We denote the Matlis dual functor with respect to $C$ by $[\quad]^\vee$. 

Put $m=\dim C+\ell-s-1$ and $L=\Ext_{\P}^m(\R(\M)_+,\K_{\P})$. We set 
\begin{align*}
&\K_M=\Ext_{\P}^{m+1}(M,\K_{\P}),\\ 
&\K_{\R(\M)}=\Ext_{\P}^m(\R(\M),\K_{\P}),\ and \\
&\K_{\G(\M)}=\Ext_{\P}^{m+1}(\G(\M),\K_{\P}).
\end{align*}
They are finitely generated graded $\R(I)$-modules. Notice that $\K_M$ is concentrated in degree $0$.
Fix an $A$-module $X$ such that $$X\cong\K_M$$ as an $A$-module. Taking $\K_\P$-dual of the exact sequences $(\sharp_1)$ and $(\sharp_2)$, we get two exact sequences 
$$
\begin{CD}
(\flat_1 )\qquad @. 0   @>>>\K_{\R(\M)}  @>i^\ast>> L     @>>>X       @>>>  \Ext_{\P}^{m+1}(\R(\M),\K_{\P})  \\
(\flat_2 )\qquad @. 0   @>>>\K_{\R(\M)}  @>j^\ast>> L(-1) @>>>\K_{\G(\M)} @>>>  \Ext_{\P}^{m+1}(\R(\M),\K_{\P})
\end{CD}
$$
of graded $\R(I)$-modules. Since $\mathrm{a}(\R(\M))=-1$ (see Lemma \ref{ai}), we have $\K_{\R(\M)}=[\K_{\R(\M)}]_{\ge 1}$ by the graded local duality theorem. Since the graded $\R(I)$-module $X$ is concentrated in degree $0$, we get the isomorphism $i^\ast_+:\K_{\R(\M)}\xrightarrow{\sim} L_+$ of graded $\R(I)$-modules which is defined by $i^\ast_+(\xi)=i^\ast(\xi)$ for all $\xi\in\K_{\R(\M)}$. Let 
$$
\beta:L_+\rightarrow L(-1)
$$ 
be composition of ${i^\ast_+}^{-1}$ and $j^\ast$. Then $\beta$ is a monomorphism of graded $\R(I)$-modules. 

Put $T=\{X_1^n\mid n\ge 0\}$, which is a multiplicatively closed subset of $\P$. Let 
$$
h:\R'(I)\to T^{-1}\R'(I)
$$ 
be the natural map of the localization of $\R'(I)$ with respect to $T$. Put $$u=h(t^{-1})$$ and then $u\in T^{-1}\R(I)$, as $T^{-1}\R(I)=T^{-1}\R'(I)$. Since $T^{-1}(L_+)=T^{-1}L$, we have the commutative diagram
$$
\begin{CD}
T^{-1}L@>u>>T^{-1}L(-1)\\
@AAA   @AAA       \\
L_+    @>\beta>>      L(-1)\\
\end{CD}
$$
of graded $\R(I)$-modules, where the vertical homomorphisms are the natural ones, which are injective (recall that $a_1$ is a regular element on $M$). Thus it allows us to view $\beta$ as a restriction of multiplication by $u~ (=t^{-1})$ on $T^{-1}L$.

We obtain from ($\flat_1$) the monomorphism $\alpha:L_0\hookrightarrow X$, as $\mathrm{a}(\R(\M))=-1$. We set $$K=\im \alpha$$ and
$$
\k_n=\left\{
\begin{array}{ll}
\alpha(\underbrace{\beta(\cdots\beta(\beta}_{n~ \mathrm{times}}(L_n))\cdots))&\mbox{ if $n\ge 0$}\cr
K&\mbox{ if $n<0$},\cr
\end{array}
\right.
$$
which are $A$-submodules of $K$. Put $\k=\{\k_n\}_{n\in\Z}$. Then we have

\begin{lem}\label{k}
$\k$ is an $I$-filtration of $K$ such that $L\cong \R(\k)$ as an $\R(I)$-module.
\end{lem}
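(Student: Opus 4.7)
The plan is to extract both assertions from the key fact that $\beta\colon L_+\to L(-1)$ is an injective graded $\R(I)$-homomorphism which, via the embedding into $T^{-1}L$, is a restriction of multiplication by $u=t^{-1}$. First I would unpack this graded-degree by graded-degree: $\beta$ restricts to an $A$-linear injection $L_n\hookrightarrow L_{n-1}$ for each $n\ge 1$, so iterating and composing with the monomorphism $\alpha\colon L_0\hookrightarrow X$ identifies each $L_n$ ($n\ge 0$) with its image $\k_n$ in $K$. Consequently the assignment $\varphi_n(y)=t^n\otimes\alpha(\beta^n(y))$ gives an $A$-linear bijection $L_n\xrightarrow{\sim}\k_n=[\R(\k)]_n$ in each non-negative degree.

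Next I would check the filtration axioms for $\k$. Axiom (iii) is immediate from $\k_0=\alpha(L_0)=K$, and for $n<0$ all requirements are automatic since every such $\k_n$ equals $K$. For $n\ge 0$, axiom (i) follows from $\beta(L_{n+1})\subseteq L_n$, which yields
\[
\k_{n+1}=\alpha(\beta^n(\beta(L_{n+1})))\subseteq\alpha(\beta^n(L_n))=\k_n.
\]
For axiom (ii), given a generator $a_i$ of $I$ and $y\in L_n$, the identity $\beta((a_it)y)=u(a_it)y=a_iy$ inside $T^{-1}L$ exhibits $a_iy$ as an element of $\beta(L_{n+1})$; applying $\beta^n$ and invoking $A$-linearity of $\beta^n$ gives $a_i\beta^n(y)=\beta^n(a_iy)\in\beta^{n+1}(L_{n+1})$, and then applying $\alpha$ yields $a_i\k_n\subseteq\k_{n+1}$, whence $I\k_n\subseteq\k_{n+1}$.

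Finally, to promote the degree-wise bijections $\varphi_n$ to an isomorphism $\varphi\colon L\xrightarrow{\sim}\R(\k)$ of graded $\R(I)$-modules, I would verify compatibility with the degree-one action: for $y\in L_n$,
\[
(a_it)\cdot\varphi_n(y)=t^{n+1}\otimes a_i\alpha(\beta^n(y))=t^{n+1}\otimes\alpha(\beta^n(a_iy))=t^{n+1}\otimes\alpha(\beta^{n+1}((a_it)y))=\varphi_{n+1}((a_it)y),
\]
where the third equality uses $a_iy=\beta((a_it)y)$ once again. Since $\R(I)$ is generated over $A$ by the elements $a_it$, this forces $\varphi$ to be $\R(I)$-linear, and the proof is complete.

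The main technical obstacle I anticipate is making the equality $\beta((a_it)y)=a_iy$ completely rigorous: $\beta$ is defined intrinsically only on $L_+$, whereas $u=t^{-1}$ lives on the localization $T^{-1}L$. The way to handle this is to pass through the commutative square of natural maps displayed in the text and use that $L\hookrightarrow T^{-1}L$ is injective (a consequence of $a_1$ being $M$-regular), so that the identity proved inside $T^{-1}L$ descends to an honest identity in $L$; everything else is bookkeeping with $A$-linear maps between graded pieces.
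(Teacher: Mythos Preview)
Your proposal is correct and follows essentially the same approach as the paper: both use the iterated composition $\alpha\circ\beta^n$ to identify $L_n$ with $\k_n$, derive the filtration axioms from $\beta(L_{n+1})\subseteq L_n$ and the identity $\beta(It\cdot L_n)=IL_n$ (your $\beta((a_it)y)=a_iy$), and assemble the degree-wise bijections into a graded $\R(I)$-isomorphism $L\cong\R(\k)$. You are somewhat more explicit than the paper in verifying $\R(I)$-linearity and in justifying the descent from $T^{-1}L$ to $L$, but the argument is the same.
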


\begin{proof}
Let $n$ be a nonnegative integer. Since $\beta(L_{n+1})\subseteq L_n$, we have $\k_{n+1}\subseteq\k_n$.
Since the homomorphism $\beta$ is the restriction of multiplication by $u\ (=h(t^{-1}))$ on $T^{-1}L$, we see 
$$I\k_n=\alpha(\underbrace{\beta(\cdots\beta(\beta}_{n~ \mathrm{times}}(IL_n))\cdots))=\alpha(\underbrace{\beta(\cdots\beta(\beta}_{n+1~ \mathrm{times}}(It\cdot L_n))\cdots)),$$
so that $I\k_n\subseteq \k_{n+1}$, as $It\cdot L_n\subseteq L_{n+1}$. Thus $\k$ is an $I$-filtration of $K$.

Let us show $L\cong\R(\k)$ as a graded $\R(I)$-module.
For each integer $n\ge 0$, let $\gamma_n:[L]_n\to \k_n$ be a homomorphism of $A$-modules defined by
$$
\gamma_n(x)=\alpha(\underbrace{\beta(\cdots\beta(\beta}_{n~ \mathrm{times}}(x))\cdots))
$$
for all $x\in [L]_n$ and let $\rho :L\to \R(\k)$ be a homomorphism of graded $\R(I)$-modules carrying $x\in [L]_n$ to $t^n\otimes \gamma_n (x)\in [\R(\k)]_n$. Then we see $\rho$ is an isomorphism of graded $\R(I)$-modules by the definition of $\k$. 
\end{proof}

Let us refer to an $I$-filtration $\w=\{\w_n\}_{n\in\Z}$ of an $A$-module $W$ with a graded $\R(I)$-isomorphism $L\xrightarrow{\sim}\R(\w)$ as {\it a canonical $I$-filtration of $W$ with respect to $M$}. Then the $I$-filtration $\k$ is a canonical $I$-filtration of $K$ with respect to $M$ by Lemma \ref{k}.

We will pick up some basic lemmas needed later on canonical filtrations as follows.

\begin{lem}\label{canonical filtration}
Let $W$ be an $A$-module and $\w=\{\w_n\}_{n\in\Z}$ a canonical $I$-filtration of $W$ with respect to $M$. Then the following two assertions hold.
\begin{enumerate}
	\item[$(1)$] $\K_{\R(\M)}\cong \R(\w)_+$ as a graded $\R(I)$-module.
	\item[$(2)$] There exists an exact sequence $0\to\G(\w)(-1)\to\K_{\G(\M)}\to\Ext_{\P}^{m+1}(\R(\M),\K_{\P})$ of graded $\R(I)$-modules.
\end{enumerate}
\end{lem}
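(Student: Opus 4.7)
For part $(1)$, the work is essentially done in the discussion preceding the lemma. The isomorphism $i^\ast_+ : \K_{\R(\M)} \xrightarrow{\sim} L_+$ has already been produced, and by the very definition of a canonical $I$-filtration the given graded $\R(I)$-isomorphism $L \xrightarrow{\sim} \R(\w)$ restricts in positive degrees to an isomorphism $L_+ \xrightarrow{\sim} \R(\w)_+$. Composing these two isomorphisms yields $\K_{\R(\M)} \cong \R(\w)_+$ as graded $\R(I)$-modules.

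For part $(2)$, I would begin from the exact sequence $(\flat_2)$. By the definition of $\beta$ as the composition $L_+ \xrightarrow{(i^\ast_+)^{-1}} \K_{\R(\M)} \xrightarrow{j^\ast} L(-1)$, and because $i^\ast_+$ is bijective, we have $\im j^\ast = \beta(L_+)$ as a graded $\R(I)$-submodule of $L(-1)$. Consequently $(\flat_2)$ collapses to the four-term exact sequence
\[
0 \longrightarrow L(-1)/\beta(L_+) \longrightarrow \K_{\G(\M)} \longrightarrow \Ext_{\P}^{m+1}(\R(\M),\K_{\P}),
\]
so all that remains is to identify $L(-1)/\beta(L_+)$ with $\G(\w)(-1)$ as graded $\R(I)$-modules.

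To carry out this identification I would transport $\beta$ across the iso $L \cong \R(\w)$. Recall that $\beta$ is the restriction to $L_+$ of multiplication by $u = h(t^{-1})$ on $T^{-1}L$. Under $L \cong \R(\w) \subseteq A[t]\otimes_A W$, this action becomes, in degree $n \ge 1$, the map $t^n\otimes x \mapsto t^{n-1}\otimes x$ from $t^n\w_n$ into $t^{n-1}\w_{n-1}$ (using the inclusion $\w_n\subseteq \w_{n-1}$). Hence the $n^{\th}$ graded component of $L(-1)/\beta(L_+)$ computes to
\[
t^{n-1}\w_{n-1}/t^{n-1}\w_n \;\cong\; \w_{n-1}/\w_n \;=\; [\G(\w)(-1)]_n
\]
for $n\ge 1$, while in degrees $n\le 0$ the module $L(-1)$ vanishes (since $\R(\w)$ does), which matches $\G(\w)(-1)$ under the standard convention $\w_n = W$ for $n\le 0$. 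This yields the required graded $\R(I)$-isomorphism $L(-1)/\beta(L_+) \cong \G(\w)(-1)$ and completes the proof.

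The one technical point that requires real care is the identification of $\beta$, on the model $\R(\w)$, with the shift-and-inclusion map just described. Since the isomorphism $L \cong \R(\w)$ is by hypothesis one of graded $\R(I)$-modules and $u \in T^{-1}\R(I)$ acts compatibly under localization at $T$, this boils down to tracking degrees carefully, but it is the step that most needs to be spelled out explicitly.
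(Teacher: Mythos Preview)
Your proof is correct and follows essentially the same route as the paper. For $(1)$ both you and the paper simply compose $i^\ast_+$ with the positive part of the given isomorphism $L\cong\R(\w)$; for $(2)$ both arguments transport $\beta$ across this isomorphism, observe that the resulting map $\R(\w)_+\to\R(\w)(-1)$ is the restriction of multiplication by $u$ (equivalently, the shift-and-inclusion $t^n\otimes x\mapsto t^{n-1}\otimes x$), and read off the cokernel as $\G(\w)(-1)$. Your degree-by-degree computation of the cokernel just makes explicit what the paper packages in the single line ``$\Coker\psi=\G(\w)(-1)$,'' and your closing remark about the technical point is exactly the step the paper handles by noting that $u\in T^{-1}\R(I)$ acts compatibly through any graded $\R(I)$-module isomorphism after localization.
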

\begin{proof}
Let $\varphi :L\xrightarrow{\sim} \R(\w)$ be the given isomorphism of graded $\R(I)$-modules.
Then we obtain an isomorphism $\K_{\R(\M)}\xrightarrow{\sim} \R(\w)_+$ of graded $\R(I)$-modules from composition of the isomorphisms $i^\ast_+:\K_{\R(\M)}\xrightarrow{\sim} L_+$ and $\varphi_+ :L_+\xrightarrow{\sim} \R(\w)_+$ of graded $\R(I)$-modules, where $i^\ast_+$ and $\varphi_+$ are the positive part of $i^\ast$ and $\varphi$.

Put $\psi =\varphi (-1)\circ \beta\circ {\varphi_+}^{-1}$. Look at the commutative and exact diagram
$$
\begin{CD}
0@>>>L_+    @>\beta>>      L(-1)@>>> \K_{\G(\M)} @>>>   \Ext_{\P}^{m+1}(\R(\M),\K_{\P})\\
@.@V\varphi_+V\wr V   @V\varphi(-1) V\wr V       \\
@.\R(\w)_+@>\psi >>\R(\w)(-1)\\
\end{CD}
$$
of graded $\R(I)$-modules. We recall that $\beta$ is the restriction of multiplication by $u$ on $T^{-1}L$. Then we see $\psi $ is a restriction of multiplication by $u$ on $T^{-1}\R(\w)$, so that $\Coker\psi =\G(\w)(-1)$. Thus we get the required exact sequence $0\to\G(\w)(-1)\to\K_{\G(\M)}\to\Ext_{\P}^{m+1}(\R(\M),\K_{\P})$ of graded $\R(I)$-modules.
\end{proof}

\begin{prop}\label{full canonical filtration}
Assume $\mathrm{a}(\G(\M))<0$. Then $K=X$, and hence there exists a canonical $I$-filtration of $\K_M$ with respect to $M$.
\end{prop}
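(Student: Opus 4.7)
The plan is to reduce the theorem to a single Ext-vanishing statement in degree $0$. Observe first that the existence of a canonical $I$-filtration of $\K_M$ is immediate once $K=X$ is established: Lemma \ref{k} already provides an $I$-filtration $\k$ of $K$ with a graded isomorphism $L\cong\R(\k)$, and the $A$-isomorphism $X\cong\K_M$ transfers $\k$ to a canonical $I$-filtration of $\K_M$ with respect to $M$. So the entire proof hinges on showing that the injection $\alpha\colon L_0\hookrightarrow X$ coming from $(\flat_1)$ is also surjective.

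First I would extract the degree-zero component of $(\flat_1)$. Using that $X$ is concentrated in degree $0$ and $\K_{\R(\M)}=[\K_{\R(\M)}]_{\ge 1}$, the four-term sequence truncates to
$$0\to L_0\xrightarrow{\alpha}X\to [\Ext_{\P}^{m+1}(\R(\M),\K_{\P})]_0,$$
so the surjectivity of $\alpha$ reduces to proving $[\Ext_{\P}^{m+1}(\R(\M),\K_{\P})]_0=(0)$.

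The key step is to translate this vanishing into a local cohomology statement via graded local duality for $\P$, which is graded Gorenstein of dimension $\dim C+\ell=(s+1)+m$. Under duality the Ext-index $m+1$ pairs with the local cohomology index $s$, and the grading reverses, so degree $0$ corresponds to degree $0$. Hence $[\Ext_{\P}^{m+1}(\R(\M),\K_{\P})]_0=(0)$ is equivalent to $[\H^s_{\fkM}(\R(\M))]_0=(0)$. Since $\dim\G(\M)=s$, the hypothesis $\mathrm{a}(\G(\M))<0$ reads $\mathrm{a}_s(\G(\M))<0$, and Lemma \ref{ai} upgrades this to $\mathrm{a}_s(\R(\M))<0$, giving the required vanishing in degree $0$.

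The main obstacle is essentially bookkeeping: one must carefully confirm that in graded local duality the degree $0$ on the Ext side really corresponds to degree $0$ on the local cohomology side, so that no hidden shift coming from $\K_{\P}=\P(-\ell)$ spoils the identification. Once this is checked, $\alpha$ is bijective, $K=X$, and the existence of a canonical $I$-filtration of $\K_M$ follows from Lemma \ref{k} as indicated above.
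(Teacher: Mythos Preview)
Your argument is correct and follows essentially the same route as the paper: use Lemma~\ref{ai} to pass from $\mathrm{a}(\G(\M))<0$ to $\mathrm{a}_s(\R(\M))<0$, invoke graded local duality to obtain $[\Ext_{\P}^{m+1}(\R(\M),\K_{\P})]_0=(0)$, conclude that $\alpha$ is surjective so $K=X$, and then read off the last assertion from $X\cong\K_M$. The paper's proof is just a terser version of what you wrote, with the same ingredients in the same order.
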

\begin{proof}
Since $\mathrm{a}(\G(\M))<0$, we have $\mathrm{a}_s(\R(\M))<0$ by Lemma \ref{ai}. By the graded local duality theorem, $$\widehat{C}\otimes_C\Ext_{\P}^{m+1}(\R(\M),\K_{\P})\cong[\H_\fkM^s(\R(\M))]^\vee$$ as a graded $\P$-module, and hence $[\Ext_{\P}^{m+1}(\R(\M),\K_{\P})]_0=(0)$. Therefore, $\alpha$ is surjective, so that $K=X$. For the last assertion, recall that $X\cong\K_M$ as an $A$-module.
\end{proof}

\begin{lem}\label{ll}
Let $W$ be a finitely generated $A$-module and $\w=\{\w_n\}_{n\in\Z}$ a canonical $I$-filtration of $W$ with respect to $M$. Then $\w_n=W$ for all integers $n\le -\mathrm{a}(\G(\M))-1$.
\end{lem}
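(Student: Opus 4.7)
The approach is to feed the exact sequence from Lemma~\ref{canonical filtration}(2),
\[
0\to\G(\w)(-1)\to\K_{\G(\M)}\to\Ext_{\P}^{m+1}(\R(\M),\K_{\P}),
\]
into the vanishing of sufficiently low-degree graded pieces of $\K_{\G(\M)}$ supplied by graded local duality.

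First I would dispose of the trivial half. Since $\w$ is an $I$-filtration of $W$, all its terms are $A$-submodules of $\w_0=W$, while condition (i) of an $I$-filtration gives the chain $W=\w_0\subseteq\w_{-1}\subseteq\w_{-2}\subseteq\cdots$, forcing $\w_n=W$ for every $n\le 0$. This already handles the case $\mathrm{a}(\G(\M))\ge 0$, so the remaining task is to propagate the equality $\w_n=W$ past $n=0$ under the assumption $\mathrm{a}(\G(\M))<0$.

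For that I would apply the graded local duality theorem to the finitely generated $s$-dimensional graded $\P$-module $\G(\M)$, exactly as used in Proposition~\ref{full canonical filtration}. This gives a graded isomorphism $\widehat{C}\otimes_C\K_{\G(\M)}\cong[\H^s_\fkM(\G(\M))]^\vee$ of graded $\widehat{C}$-modules, so by the definition of $\mathrm{a}(\G(\M))=\mathrm{a}_s(\G(\M))$ together with faithful flatness of $C\to\widehat{C}$, one obtains $[\K_{\G(\M)}]_n=(0)$ for every $n<-\mathrm{a}(\G(\M))$. Plugging this into the displayed exact sequence, the monomorphism
\[
\w_{n-1}/\w_n=[\G(\w)(-1)]_n\hookrightarrow[\K_{\G(\M)}]_n
\]
forces $\w_{n-1}=\w_n$ whenever $n<-\mathrm{a}(\G(\M))$, equivalently $\w_m=\w_{m+1}$ for all $m\le-\mathrm{a}(\G(\M))-2$. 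Starting from $\w_0=W$ and iterating this equality upward in $m$ yields $\w_n=W$ for $0\le n\le -\mathrm{a}(\G(\M))-1$, and combined with the first step this finishes the proof.

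The only substantive ingredient is the graded local duality computation of the degrees in which $\K_{\G(\M)}$ vanishes; the rest is just index bookkeeping on the filtration together with the injectivity of the map from Lemma~\ref{canonical filtration}(2), so no real obstacle is anticipated.
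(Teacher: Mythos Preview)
Your proof is correct and follows essentially the same approach as the paper: use graded local duality to see $[\K_{\G(\M)}]_n=(0)$ for $n\le -\mathrm{a}(\G(\M))-1$, then apply the injection from Lemma~\ref{canonical filtration}(2) to conclude $\w_{n-1}=\w_n$ in that range, and finally chain these equalities back to $\w_0=W$. The only cosmetic difference is that the paper runs the chain of equalities downward from $\w_{-a-1}$ until it meets some $\w_i$ with $i\le 0$, whereas you run it upward from $\w_0$; the content is identical.
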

\begin{proof}
Put $a=\mathrm{a}(\G(\M))$.
By the graded local duality theorem, $$\widehat{C}\otimes_C\Ext_\P^{m+1}(\G(\M),\K_\P)\cong[\H_\fkM^s(\G(\M))]^\vee$$ as a graded $\P$-module, so that,  for all integers $n\le -a-1$, $[\K_{\G(\M)}]_{n}=(0)$ and then $[{\G(\w)}]_{n-1}\ (\cong\w_{n-1}/\w_{n})=(0)$ by Lemma \ref{canonical filtration}. Hence, $\w_{-a-1}=\w_{-a-2}=\cdots$. Therefore, we get $\w_n=W$ for all $n\le -a-1$, since $\w_i=W$ for all $i\le 0$.
\end{proof}

\begin{lem}\label{aG}
$\mathrm{a}(\G(\M))= -s$ whenever the finitely generated $A$-module $M$ is an overring of $A$ and the ideal $I$ is generated by a system of parameters for $M$.
\end{lem}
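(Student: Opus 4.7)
The plan is to prove $\mathrm{a}(\G(\M)) = -s$ by establishing both inequalities.

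For the upper bound $\mathrm{a}(\G(\M)) \le -s$, note that $\G(\M) = \bigoplus_{n \ge 0} I^n M / I^{n+1} M$ is generated as a graded module over its degree-zero piece $M/IM$ by $a_1 t, \ldots, a_s t \in [\G(\M)]_1$, and $M/IM$ is Artinian since $I$ is a parameter ideal of the overring $M$. Thus there is a natural graded surjection of $\R(I)$-modules
$$
\varphi \colon (M/IM)[X_1, \ldots, X_s] \twoheadrightarrow \G(\M), \qquad X_i \mapsto a_i t.
$$
The source is a standard graded polynomial ring of dimension $s$ over an Artinian local ring; a direct Čech complex calculation shows its $a$-invariant is $-s$, with $[\H^s_\fkM((M/IM)[X_1, \ldots, X_s])]_{-s} \cong M/IM$. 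Since top graded local cohomology is right exact on graded modules of dimension at most $s$, $\varphi$ induces a graded surjection on $\H^s_\fkM(-)$, yielding $\mathrm{a}(\G(\M)) \le -s$.

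For the lower bound $\mathrm{a}(\G(\M)) \ge -s$, I would exhibit a nonzero class in $[\H^s_\fkM(\G(\M))]_{-s}$. Since $(a_1 t, \ldots, a_s t) \R(I) = \R(I)_+$ and $V(\R(I)_+) \cap \Supp \G(\M) = \{\fkM\}$, the top graded local cohomology is computed by the Čech complex on $a_1 t, \ldots, a_s t$. The natural candidate in degree $-s$ is the class of $\bar 1/(a_1 t \cdots a_s t)$, where $\bar 1 \in M/IM$ is the image of $1 \in M$ (nonzero because the parameter ideal $IM$ is proper). To verify this class is not a Čech coboundary, I would unwind the differential: its vanishing would force an equation in $M$ of the form
$$
(a_1 \cdots a_s)^{k-1} \equiv \sum_{i=1}^s a_i^k r_i \pmod{I^{s(k-1)+1} M}
$$
with $r_i \in I^{k-2} M$, for some $k \ge 2$. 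Passing to $M/P$ for a minimal prime $P \in \Min M$ with $\dim M/P = s$ (which exists because the overring inclusion $A \hookrightarrow M$ yields a surjection $\Spec M \to \Spec A$ hitting a minimal prime of $A$ of top dimension), this becomes a homogeneous polynomial relation of degree $s(k-1)$ in $\bar a_1, \ldots, \bar a_s$ over the Noetherian local domain $M/P$. The monomial $\bar a_1^{k-1} \cdots \bar a_s^{k-1}$ appears with coefficient $1 \notin \fkm_{M/P}$, contradicting the analytic independence of the system of parameters $\bar a_1, \ldots, \bar a_s$ in $M/P$.

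The main obstacle is the lower bound. The upper bound is formal via the natural surjection $\varphi$, but the lower bound requires carefully combining the overring structure of $M$ (which supplies $1 \in M$) with the parameter-ideal hypothesis. The key maneuver is the reduction modulo a minimal prime of top dimension, which converts the ideal-membership question into one about analytic independence of a system of parameters in a Noetherian local domain---a statement that fails precisely when a relation involves a unit coefficient, as it does here.
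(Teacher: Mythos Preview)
Your overall strategy is sound and genuinely different from the paper's proof, but the lower-bound computation contains an exponent error that breaks the ``homogeneous relation of degree $s(k-1)$'' step for $s\ge 3$. Unwinding the \v{C}ech coboundary condition carefully, one finds that the elements $r_i$ must lie in $I^{(s-1)k-s}M$, not $I^{k-2}M$; these coincide only when $s=2$. With $r_i\in I^{k-2}M$ one gets $a_i^k r_i\in I^{2k-2}M$, which for $s\ge 3$ is not in $I^{s(k-1)}M$, so the terms do not assemble into a homogeneous form of degree $s(k-1)$ and the analytic-independence contradiction does not follow as written. Once you replace $k-2$ by $(s-1)k-s$, each $a_i^k r_i$ lands in $I^{s(k-1)}M$, and expanding $r_i$ as a sum of monomials of degree $(s-1)k-s$ in the $a_j$ yields a genuine homogeneous relation of degree $s(k-1)$ in which the monomial $(X_1\cdots X_s)^{k-1}$ has coefficient $1$ while every monomial coming from $a_i^k r_i$ has $X_i$-exponent $\ge k$; the $I^{s(k-1)+1}M$ term is absorbed since $I^{s(k-1)+1}\subseteq \mathfrak n\, I^{s(k-1)}$ for any maximal ideal $\mathfrak n$ of $M/P$. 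A second minor point: $M$ is only semilocal, so $M/P$ need not be local; you should further localize at a maximal ideal $\mathfrak n$ of $M/P$ of height $s$ before invoking analytic independence.

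By contrast, the paper handles both inequalities at once with a cleaner device: after completing and splitting $\widehat A\otimes_A M$ as a product of complete local rings, it reduces to the case $M=A$, and then uses that $A/\mathfrak m\otimes_A \H^s_{\mathfrak M}(G)\cong \H^s_{\mathfrak M}(G/\mathfrak m G)$ together with Nakayama's lemma (each graded piece of $\H^s_{\mathfrak M}(G)$ being a finitely generated $A$-module) to conclude $\mathrm a(G)=\mathrm a(G/\mathfrak m G)$. Since $G/\mathfrak m G$ is the polynomial ring in $s$ variables over $A/\mathfrak m$ by analytic independence of a system of parameters, $\mathrm a(G)=-s$ follows immediately. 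Your approach is more hands-on (explicit \v{C}ech class plus reduction modulo a minimal prime), while the paper's Nakayama reduction avoids any coboundary bookkeeping; both ultimately rest on the same analytic-independence fact.
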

\begin{proof}
Note that $s=\dim M=\dim A$ and that $\widehat{A}\otimes_A M$ is the direct product of local rings of dimension at most $s$. Then we see $\widehat{A}\otimes_A\G(\M)$ is the direct product of associated graded rings of dimension at most $s$. Thus it suffices to consider the case where $M=A$. Put $G=\G(\M)$. Then $\mathrm{a}(G)=\mathrm{a}(G/\m G)$ (recall that $A/\m\otimes_A\H_\fkM^s(G)\cong \H_\fkM^s(G/\m G)$ and that $[\H_\fkM^s(G)]_i$ is a finitely generated $A$-module for all $i\in\Z$).
Since $G/\m G$ is the graded polynomial ring in $s$ variables of degree $1$ over the field $A/\m$, we have $\mathrm{a}(G)= -s$.
\end{proof}

Let $M'$ be a finitely generated $A$-module of dimension $s$ such that the $A$-module $M$ is an $A$-submodule of $M'$. Assume that there exists a regular element on $M'$ in the ideal $I$. We set $\M'=\{I^nM'\}_{n\in\Z}$ and put $L'=\Ext_{\P}^m(\R(\M')_+,\K_{\P})$. Then we have the following.

\begin{lem}\label{two canonical filtrations}
Assume that $s\ge 2$, $\dim M'/M\le s-2$, and $\mathrm{a}(\G(\M'))<0$. Then there exists a canonical $I$-filtration $\w=\{\w_n\}_{n\in\Z}$ of $X$ with respect to $M'$ such that $\w_n\subseteq \k_n$ for all $n\in\Z$ and that a diagram
$$
\begin{CD}
L'@>>>L\\
@V\rho_1 V\wr V     @V\rho_2 V\wr V \\
\R(\w)@>>>\R(\k)\\
\end{CD}
$$
of graded $\R(I)$-modules is commutative for some isomorphisms $\rho_1$ and $\rho_2$, where the top homomorphism is induced by the inclusion map $\R(\M)_+\hookrightarrow \R(\M')_+$, and the bottom is the inclusion map. Hence, one has a commutative diagram
$$
\begin{CD}
\K_{\R(\M')}@>>>\K_{\R(\M)}\\
@VV\wr V     @VV\wr V \\
\R(\w)_+@>>>\R(\k)_+\\
\end{CD}
$$
of graded $\R(I)$-modules.
\end{lem}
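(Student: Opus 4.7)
The plan is to identify $\K_M$ with $\K_{M'}$ canonically so that both $\k$ and $\w$ are filtrations of the same module $X$, and then to check that the two constructions are compatible with the inclusion $\R(\M)_+\hookrightarrow \R(\M')_+$.

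First, I apply $\Ext_{\P}^{\bullet}(-,\K_{\P})$ to the short exact sequence $0\to M\to M'\to M'/M\to 0$. Since $\dim M'/M\le s-2$, the grade of $M'/M$ in $\P$ is at least $\dim\P-(s-2)=m+3$, so $\Ext_{\P}^{i}(M'/M,\K_{\P})=0$ for every $i\le m+2$. Together with $\Ext_{\P}^{m}(M,\K_{\P})=\Ext_{\P}^{m}(M',\K_{\P})=0$, this yields a natural isomorphism $\K_{M'}\xrightarrow{\sim}\K_M$, allowing me to identify $X\cong\K_{M'}$.

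Next, since $\mathrm{a}(\G(\M'))<0$, Proposition \ref{full canonical filtration} applied to $\M'$ produces a canonical $I$-filtration $\w$ of $X$ with respect to $M'$ together with an isomorphism $\rho_1:L'\xrightarrow{\sim}\R(\w)$, while Lemma \ref{k} supplies $\rho_2:L\xrightarrow{\sim}\R(\k)$ for the $M$-side. The inclusion $\R(\M)_+\hookrightarrow \R(\M')_+$ is part of morphisms of the exact sequences $(\sharp_1)$ and $(\sharp_2)$ from $\M$ to $\M'$; applying $\Ext_{\P}^{m}(-,\K_{\P})$ and invoking the identification $\K_{M'}\cong\K_M$ produces commutative squares
$$
\begin{CD}
L'_0 @>\alpha'>> X \\
@VVV @| \\
L_0 @>\alpha>> X
\end{CD}
$$
and
$$
\begin{CD}
L'_+ @>\beta'>> L'(-1) \\
@VVV @VVV \\
L_+ @>\beta>> L(-1).
\end{CD}
$$
The right square is the key compatibility: both $\beta$ and $\beta'$ are restrictions of multiplication by $u=t^{-1}$ on the common localization at $T=\{X_1^n\}$, and the vertical $L'\to L$ commutes with this localization.

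The surjectivity of $\alpha'$ together with the left square forces $\alpha$ to be surjective, so $K=X$ and both $\k$ and $\w$ are $I$-filtrations of $X$. For any $x\in L'_n$ mapping to $x'\in L_n$, iterating the right square $n$ times and then invoking the left square gives $\alpha(\beta^n(x'))=\alpha'(\beta'^n(x))$, whence $\rho_2(x')=t^n\otimes\alpha'(\beta'^n(x))=\rho_1(x)$ inside $\R(\k)$. This proves $\w_n\subseteq\k_n$ for every $n$ and makes the first diagram in the statement commute with inclusion as the bottom arrow. The second diagram follows by restriction to positive parts via $\K_{\R(\M)}\cong L_+\cong\R(\k)_+$ and $\K_{\R(\M')}\cong L'_+\cong\R(\w)_+$. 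The main obstacle I anticipate is the verification of the $\beta$-square, which requires a careful unwinding of the definitions of $\beta$ and $\beta'$ as restrictions of the $u$-action and of the fact that the inclusion $L'\hookrightarrow L$ intertwines the two localizations.
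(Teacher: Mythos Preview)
Your proof is correct and follows essentially the same route as the paper's: both build the two commutative squares (your $\alpha$-square and $\beta$-square are the paper's $(\natural_2)$ and $(\natural_1)$) from the morphisms of the sequences $(\sharp_1)$ and $(\sharp_2)$, and then read off $\w_n\subseteq\k_n$ and the commutativity of the main diagram by iterating $\beta$. The only difference is cosmetic: the paper keeps an a priori nontrivial automorphism $\pi:X\to X$ coming from the identification $\K_{M'}\cong X$ and then sets $\w_n=\pi(\k'_n)$, whereas you choose the identification $X\cong\K_{M'}$ via the natural isomorphism $\K_{M'}\xrightarrow{\sim}\K_M$ so that $\pi=\mathrm{id}$, which lets you define $\w$ directly without transporting through $\pi$.
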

\begin{proof}
Since $s\ge 2$ and $\dim M'/M\le s-2$, $\K_{M'}\cong\K_{M}\cong X$ as an $A$-module. We have two commutative and exact diagrams
$$
\begin{array}{l}
(\sharp_1')\qquad
\begin{CD}
0@>>> \R(\M)_+@>i>>\R(\M)@>p>> M@>>> 0\\
@.    @VVV          @VVV       @VVV   \\
0@>>> \R(\M')_+@>i'>>\R(\M')@>p'>> M'@>>> 0\\
\end{CD}
\\
\hspace{-1.6cm}\mathrm{and}\\
(\sharp_2')\qquad
\begin{CD}
0@>>> \R(\M)_+(1)@>j>>\R(\M)@>\epsilon >> \G(\M)@>>> 0\\
@.    @VVV          @VVV       @VVV   \\
0@>>> \R(\M')_+(1)@>j'>>\R(\M')@>\epsilon ' >> \G(\M')@>>> 0\\
\end{CD}
\end{array}
$$
of graded $\R(I)$-modules, where the vertical maps are the natural ones. In the same way as the construction of $\k$, the diagrams above induce two commutative diagrams
\begin{center}
($\natural_1$)\qquad
$
\begin{CD}
L_+@>\beta  >>L(-1)\\
@AAA          @AAA \\
L'_+@>\beta '>>L'(-1)\\
\end{CD}
$
\qquad and\qquad ($\natural_2$) \qquad 
$
\begin{CD}
L_0@>\alpha >>X\\
@AAA          @A\pi A\wr A \\
L'_0@>\alpha '>>X\\
\end{CD}
$
\end{center}
of graded $\R(I)$-modules.
Since $\mathrm{a}(\G(\M'))<0$, we have $\alpha '$ is an isomorphism by Proposition \ref{full canonical filtration}, and therefore we see every homomorphism in the diagram ($\natural_2$) is bijective. Hence, $X=\im\alpha=\im\alpha'$. We put
$$
\k'_n=\left\{
\begin{array}{ll}
\alpha'(\underbrace{\beta'(\cdots\beta'(\beta'}_{n~ \mathrm{times}}(L'_n))\cdots))&\mbox{ if $n>0$}\cr
X&\mbox{ if $n\le0$}.\cr
\end{array}
\right.
$$
Then $\k'=\{\k'_n\}_{n\in\Z}$ is a canonical $I$-filtration  of $X$ with respect $M'$ by Lemma \ref{k}.
Because of commutativity of diagrams ($\natural_1$) and ($\natural_2$), we get the inclusions
$\pi(\k'_n)\subseteq \k_n$ for all $n\in\Z$, and then we have the commutative diagram
\begin{center}
$
\begin{CD}
L'@>>>L\\
@V\rho' V\wr V          @V\rho V\wr V \\
R(\k')@>\eta >>\R(\k)\\
\end{CD}
$
\end{center}
of graded $\R(I)$-modules, where the top homomorphism is induced by the inclusion map $\R(\M)_+\hookrightarrow\R(\M')_+$, $\eta$ is defined by $t^n\otimes x \mapsto t^n\otimes \pi (x)$, and the vertical ones are the isomorphisms obtained by Lemma \ref{k}.
We put $\w_n=\pi(\k'_n)$ and then $\w_n\subseteq \k_n$ for all $n\in\Z$. We recall $\pi$ is an isomorphism of $A$-modules, so that $\w=\{\w_n\}_{n\in\Z}$ must be a canonical $I$-filtration of $X$ with respect to $M'$, as so is $\k'$. Then from the diagram above and the equality $\R(\w)=\im \eta$, we obtain the required commutative diagram. 

By Lemma \ref{canonical filtration}, $\K_{\R(\M)}\cong L_+$ and $\K_{\R(\M')}\cong L'_+$ as graded $\R(I)$-module, which implies the last required commutative diagram.
\end{proof}

In the rest of this section, we consider the case where the ideal $I$ is a standard parameter ideal. For each graded $\R(I)$-module $H$, we set $\Soc H=(0):_H\fkM$.

\begin{prop}\label{concent}
Assume that the ideal $I$ is generated by a standard system of parameters for $M$. Then $\K_{\G(\M)}$ is generated by homogeneous elements of degrees $s$ and $s+1$ as a graded $\R(I)$-module.
\end{prop}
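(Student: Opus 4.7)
The plan is to use the canonical filtration machinery developed in this section. Since $\mathrm{a}(\G(\M))=-s<0$ by Lemma \ref{aG}, Proposition \ref{full canonical filtration} provides a canonical $I$-filtration $\k=\{\k_n\}_{n\in\Z}$ of $X\cong\K_M$ with respect to $M$, and Lemma \ref{ll} then forces $\k_n=X$ for every $n\le s-1$. Consequently $[\G(\k)(-1)]_n=\k_{n-1}/\k_n=0$ for $n\le s-1$, so $\G(\k)(-1)$ is concentrated in degrees $\ge s$. The exact sequence in Lemma \ref{canonical filtration}(2) embeds $\G(\k)(-1)$ into $\K_{\G(\M)}$ with cokernel sitting inside $\Ext_{\P}^{m+1}(\R(\M),\K_{\P})$; by graded local duality the same vanishing $[\K_{\G(\M)}]_n=0$ for $n<s$ follows.

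The first substantial step is to show that $\G(\k)(-1)$ is generated as a graded $\R(I)$-module in degrees $s$ and $s+1$, which combined with the vanishing above reduces to the claim $\k_n=I\k_{n-1}$ for every $n\ge s+1$. This is where the standard-parameter hypothesis enters: by Corollary \ref{independent}, $a_1,\dots ,a_s$ form an unconditioned strong $d$-sequence on $\widetilde M$, and via the identification $L\cong \R(\k)$ from Lemma \ref{k} the graded $\P$-module $L$ is presented by Koszul-type linear relations in $X_1,\dots ,X_s$. Tracking these relations (via Proposition \ref{independentold} applied to the standard system of parameters on $M$) forces $\k_n\subseteq I\k_{n-1}$ once $n\ge s+1$, while the reverse containment is built into the definition of an $I$-filtration.

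The second step is to bound the cokernel $\K_{\G(\M)}/\G(\k)(-1)\subseteq \Ext_{\P}^{m+1}(\R(\M),\K_{\P})$. By graded local duality this module is controlled, after completion, by $\H_\fkM^s(\R(\M))$. Applying the local cohomology long exact sequences attached to $(\sharp_1)$ and $(\sharp_2)$ and using that $\H_\m^i(M)$ is concentrated in degree $0$ while the standard-parameter hypothesis yields a sharp bound $\mathrm{a}_i(\G(\M))\le -i$ for $i<s$, a degree-chase shows that $[\Ext_{\P}^{m+1}(\R(\M),\K_{\P})]_n=0$ for every $n\ge s+2$. Together with the first step this yields the desired generation of $\K_{\G(\M)}$ in degrees $s$ and $s+1$. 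The main obstacle is the first step: converting the abstract standard-parameter hypothesis into the concrete identity $\k_n=I\k_{n-1}$ for $n\ge s+1$ requires the full force of the $d$-sequence structure on the $a_i$'s and is the most technical part of the argument.
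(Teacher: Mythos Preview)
Your approach has a genuine circularity problem. Observe that under the standard-parameter hypothesis one has $[\H_\fkM^s(\R(\M))]_n=0$ for all $n\le 1-s$ (this is \cite[Theorem~6.2]{T}, used verbatim in the proof of Corollary~\ref{concentrated}), so by local duality $[\Ext_\P^{m+1}(\R(\M),\K_\P)]_n=0$ for all $n\ge s-1$. Since both $\G(\k)(-1)$ and $\K_{\G(\M)}$ vanish in degrees $<s$, the exact sequence of Lemma~\ref{canonical filtration}(2) then gives an \emph{isomorphism} $\G(\k)(-1)\cong\K_{\G(\M)}$ of graded $\R(I)$-modules. Consequently your ``Step~2'' is trivial, and your ``Step~1''—the claim $\k_n=I\k_{n-1}$ for $n\ge s+1$—is literally equivalent to the proposition you are trying to prove. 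Indeed, in the paper the implication runs the other way: Proposition~\ref{concent} is proved first and Corollary~\ref{concentrated} (the structure of $\k$) is deduced from it.

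The argument you offer for Step~1 does not stand on its own. Corollary~\ref{independent} and Proposition~\ref{independentold} are statements about $M$ and $\widetilde M$; they say nothing about the graded $\P$-module $L=\Ext_\P^m(\R(\M)_+,\K_\P)$ or about how $L$ is ``presented by Koszul-type linear relations'', and no mechanism is given that converts a $d$-sequence property of $a_1,\dots,a_s$ on $M$ or $\widetilde M$ into the containment $\k_n\subseteq I\k_{n-1}$. (Also, Lemma~\ref{aG} applies only when $M$ is an overring of $A$; the correct reference for $\mathrm a(\G(\M))\le -s$ in this generality is \cite[Theorem~5.4]{T}.) The paper's proof avoids the canonical filtration entirely: it shows directly, by descending induction on $n$, that $\Soc\H_\fkM^{s-n}(G_n)$ is concentrated in degrees $\ge -s+n-1$, where $G_n=\G(\M)/(a_1t,\dots,a_nt)\G(\M)$; the crucial input is Trung's result that $\H_\fkM^{s-n-1}(G_n)$ is concentrated in the single degree $-s+n+1$. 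Taking $n=0$ and applying graded local duality gives the generation statement for $\K_{\G(\M)}$.
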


\begin{proof}
It is enough to prove that $\Soc \H_\fkM^s(\G(\M))$ is concentrated in degrees $-s-1$ and $-s$. We set $I=(a_1, a_2, \dots , a_s)$ for some standard system $a_1, a_2, \dots , a_s$ of parameters of $M$.  Put $f_i=a_it$ for each $1\le i\le s$ and $G=\G(\M)$. 
Let $n$ be an integer with $0\le n\le s$ and put 
$$
G_n=G/(f_1, f_2,\dots f_n)G
.$$ Since $[\H_\fkM^s(G)]_i=(0)$ for all integers $i>-s$ (see, e.g., \cite[Theorem 5.4]{T}), 
it suffices to show $[\Soc \H_\fkM^{s}(G_n)]_i=(0)$ if $i<-s-1$. 
To do this, we will prove $[\Soc \H_\fkM^{s-n}(G_n)]_i=(0)$ if $i<-s+n-1$ by descending induction on $n$. 

When $n=s$, it is clearly true, as the graded module $G_s$ is concentrated in degree $0$. 

Let $n<s$. Put $f=f_{n+1}$ and 
$
\overline{G_n}=G_n/(0):_{G_n}f.
$
Since $\dim\ (0):_{G_n}f=0$, we have $\H_\fkM^{i}(G_n)\cong\H_\fkM^{i}(\overline{G_n})$ for all integers $i>0$, and then applying the local cohomology functor $\H_\fkM^j(\ast)$ to the exact sequence 
$$
0\to \overline{G_n}(-1)\to G_n\to G_{n+1}\to 0
$$ 
of graded $\G(I)$-modules induced by multiplication by $f$ on $G_n$, we get the exact sequence 
$$
\H_\fkM^{s-n-1}(G_n)\xrightarrow{\phi_1}\H_\fkM^{s-n-1}(G_{n+1})\xrightarrow{\phi_2} \H_\fkM^{s-n}(G_n)(-1)\xrightarrow{\phi_3}\H_\fkM^{s-n}(G_n)
$$ of graded local cohomology modules (recall $s-n>0$). Let $i<-s+n-1$. We will show 
$$
[\Soc \H_\fkM^{s-n}(G_n)]_i=(0).
$$
Suppose that there is a nonzero element $z\in [\Soc \H_\fkM^{s-n}(G_n)]_i$. Notice that $\phi_3$ is multiplication by $f$. Since $f \cdot z=0$, there is a nonzero element $y\in [\H_\fkM^{s-n-1}(G_{n+1})]_{i+1}$ such that $\phi_2(y)=z$. By the inductive hypothesis, we have $[\Soc\H_\fkM^{s-n-1}(G_{n+1})]_{i+1}=(0)$, so that $y\not\in\Soc\H_\fkM^{s-n-1}(G_{n+1})$. Hence, there is a homogeneous element $g \in\fkM$ such that $\deg g\le 1$ and $g y\neq 0$. Since 
$$
\phi_2(g y)=g \phi_2(y)=g z =0,
$$ there is a nonzero element $x\in [\H_\fkM^{s-n-1}(G_n)]_{\deg g +i+1}$ such that $\phi_1(x)=g y$. We have the inequality $\deg g +i+1< -s+n+1$, since $i<-s+n-1$ and $\deg g\le 1$. However $\H_\fkM^{s-n-1}(G_n)$ is concentrated in degree $-s+n+1$ (see, e.g., \cite[Lemma 5.3 and Theorem 5.4]{T}). This is a contradiction, and therefore $[\Soc \H_\fkM^{s-n}(G_n)]_i=(0)$. This completes the proof.
\end{proof}

\begin{cor}\label{concentrated}
Assume that the ideal $I$ is generated by a standard system of parameters of $M$. Then the canonical $I$-filtration $\k=\{\k_n\}_{k\in\Z}$ of $K$ with respect to $M$ has the form
$$
\k_n=\left\{
\begin{array}{ll}
I^{n-s}\k_s & \mbox{ if $n\ge s$}\cr
K & \mbox{ if $n< s$}.\cr
\end{array}
\right.
$$
\end{cor}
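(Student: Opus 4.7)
The plan is to split the proof along the two ranges $n\le s-1$ and $n\ge s$, using Lemma \ref{ll} for the first range and Proposition \ref{concent} plus Nakayama for the second.

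For $n\le s-1$: I would first read off from the proof of Proposition \ref{concent} the vanishing $[\H_\fkM^s(G)]_i=(0)$ for all $i>-s$, so that $\mathrm{a}(\G(\M))\le -s$. Lemma \ref{ll} then gives $\k_n=K$ for every $n\le -\mathrm{a}(\G(\M))-1$, which covers $n\le s-1$.

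For $n\ge s$: the inclusion $I^{n-s}\k_s\subseteq\k_n$ is immediate from the filtration axiom $I\k_m\subseteq\k_{m+1}$. For the reverse inclusion, the plan is to exploit Proposition \ref{concent}, which says $\K_{\G(\M)}$ is generated in degrees $s$ and $s+1$ as an $\R(I)$-module, together with the embedding $\G(\k)(-1)\hookrightarrow \K_{\G(\M)}$ supplied by Lemma \ref{canonical filtration}(2). The already-established vanishing $[\G(\k)(-1)]_n=\k_{n-1}/\k_n=(0)$ for $n\le s-1$ pins down where $\G(\k)(-1)$ can live, and I would argue that $\G(\k)(-1)$ is itself generated (as a $\G(I)$-module) in degrees $\le s+1$. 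Unfolding this into filtration language yields the relation
$$\k_n = I\k_{n-1}+\k_{n+1}\qquad (n\ge s+1).$$
Because $L\cong \R(\k)$ is a finitely generated $\R(I)$-module, an Artin--Rees argument produces some $N$ with $\k_n=I\k_{n-1}$ for all $n\ge N$. A descending induction from $n=N$ down to $n=s+1$, applying Nakayama's Lemma to $\k_n=I\k_{n-1}+I\k_n$ at each step, then gives $\k_n=I\k_{n-1}$ for every $n\ge s+1$, and iterating produces $\k_n=I^{n-s}\k_s$ for $n\ge s$.

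The principal obstacle is the generation-degree transfer from $\K_{\G(\M)}$ to its graded submodule $\G(\k)(-1)$: in general, a submodule of a module generated in low degrees need not inherit that bound. The resolution has to use the specific nature of the embedding, namely that $\G(\k)(-1)$ is the image of $L(-1)$ under the connecting map in sequence $(\flat_2)$ and $L\cong \R(\k)$. The cleanest formulation passes through graded local duality: the desired generation bound is equivalent to $\Soc \H^{s+1}_\fkM(\R(\M))$ being concentrated in degrees $\ge -s$. I would try to prove this socle statement by adapting the descending induction on successive Koszul quotients $G_n=G/(f_1,\dots,f_n)G$ used in the proof of Proposition \ref{concent}, carried out this time for $\R(\M)$ and $R_n=\R(\M)/(f_1,\dots,f_n)\R(\M)$ (with $f_i=a_it$), and tracing the corresponding long exact sequences of local cohomology.
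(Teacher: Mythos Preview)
Your treatment of the range $n\le s-1$ matches the paper exactly, and your endgame for $n\ge s$ (the recursion $\k_n=I\k_{n-1}+\k_{n+1}$ plus stability and Nakayama) is equivalent to the paper's ``$\k_n=I^{n-s}\k_s+\bigcap_{m\ge n+1}\k_m$'' argument. The divergence is entirely in how you handle the obstacle you correctly flag: transferring the generation bound from $\K_{\G(\M)}$ to the submodule $\G(\k)(-1)$.

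The paper does not work around this obstacle; it dissolves it. The exact sequence in Lemma~\ref{canonical filtration}(2) says that the cokernel of $\G(\k)(-1)\hookrightarrow\K_{\G(\M)}$ embeds in $\Ext_\P^{m+1}(\R(\M),\K_\P)$. By graded local duality this last module is (after completion) $[\H^s_\fkM(\R(\M))]^\vee$, and Trung's result \cite[Theorem~6.2]{T} gives $[\H^s_\fkM(\R(\M))]_n=0$ for all $n\le 1-s$, hence $[\Ext_\P^{m+1}(\R(\M),\K_\P)]_n=0$ for all $n\ge s-1$. Since $\K_{\G(\M)}$ itself lives in degrees $\ge s$, the embedding is therefore an \emph{isomorphism} $\G(\k)(-1)\cong\K_{\G(\M)}$, and Proposition~\ref{concent} applies directly.

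Your proposed detour---reformulating the goal as a socle bound on $\H^{s+1}_\fkM(\R(\M))$ and rerunning the inductive machine of Proposition~\ref{concent} on the Rees quotients $R_n$---is a correct equivalence (via $\K_{\R(\M)}\cong\R(\k)_+$ and Matlis duality), and could plausibly be pushed through, but it is substantially more work and is left only as a sketch. The one piece of input you are missing is precisely the vanishing $[\H^s_\fkM(\R(\M))]_n=0$ for $n\le 1-s$; once you invoke it, the submodule problem disappears and no new socle computation is needed.
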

\begin{proof}
We note $[\H_\fkM^s(\G(\M))]_n=(0)$ for all $n> -s$ and $[\H_\fkM^s(\R(\M))]_n=(0)$ for all $n\le 1-s$ (see, e.g., \cite[Theorem 5.4 and Theorem 6.2]{T}). Since $\mathrm{a}(\G(\M))\le -s$, $\k_n=K $ for all $n< s$ by Lemma \ref{ll}. By the graded local duality theorem, we have
$$
\widehat{C}\otimes_C\Ext_\P^{m+1}(R,\K_\P)\cong[\H_\fkM^s(R)]^\vee
$$ 
as a graded $\P$-module, so that $\G(\k)(-1)\cong\K_{\G(\M)}$ as a graded $\R(I)$-module by  Lemma \ref{canonical filtration}. Then from Proposition \ref{concent} we obtain $\G(\k)(-1)$ is generated by homogeneous elements of degrees $s$ and $s+1$ as a graded $\R(I)$-module, so that the equalities $\k_n=I^{n-s}\k_s+\k_{n+1}$ holds for all integers $n\ge s$. Let $n$ be an integer such that $n\ge s$. Then we see the equality $\k_n=I^{n-s}\k_s+\cap_{m\ge n+1}\k_{m}$, and therefore $\k_n=I^{n-s}\k_s$ (note that $\k$ is a stable $I$-filtration, as the module $L$ is a finitely generated $\R(I)$-module).
\end{proof}

To close this section, let us state the next lemma due to \cite{G83}. For the proof of it, use the same arguments as in the proof of \cite[Lemma (2.15)]{G83}. 

\begin{lem}[\cite{G83}]\label{top}
Assume that $I$ is generated by a standard system $a_1, a_2, \dots , a_s$ of parameters for $M$. Then $$[\H_{\G(I)_+}^s(\G(\M))]_{-s}\cong M/\widetilde{\Sigma}(a_1,a_2,\dots ,a_s; M)$$ as an $A$-module. Hence, $[\K_{\G(\M)}]_s\cong [M/\widetilde{\Sigma}(a_1,a_2,\dots ,a_s; M)]^\vee$ as an $A$-module.
\end{lem}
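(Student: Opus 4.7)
Proof plan. The strategy is to compute the top graded local cohomology of $\G(\M)$ directly via the \v{C}ech complex on the degree-one initial forms $a_1^\ast,\ldots,a_s^\ast \in [\G(I)]_1$, which generate the irrelevant ideal $\G(I)_+$. This is the technique used in the proof of \cite[Lemma (2.15)]{G83}.

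First I would invoke the standard direct-limit presentation of top \v{C}ech cohomology to write
$$
[\H_{\G(I)_+}^s(\G(\M))]_{-s} \;=\; \varinjlim_{n}\,\bigl[\G(\M)/(a_1^{\ast n},\ldots,a_s^{\ast n})\G(\M)\bigr]_{(n-1)s},
$$
where the transition maps are induced by multiplication by $a_1^\ast\cdots a_s^\ast$. A pigeonhole argument on multi-indices $(e_1,\ldots,e_s)$ with $\sum e_i = (n-1)s$ shows that $I^{(n-1)s}M = (a_1\cdots a_s)^{n-1}M + \sum_j a_j^n\, I^{(n-1)s-n}M$, which implies that each transition in the direct system is surjective onto the appropriate graded piece. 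Precomposition with the natural projection $M \twoheadrightarrow M/IM = [\G(\M)]_0$ therefore produces a canonical $A$-linear surjection $\pi\colon M \twoheadrightarrow [\H_{\G(I)_+}^s(\G(\M))]_{-s}$.

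The substantive step is to identify $\ker \pi$ with $\widetilde{\Sigma}(a_1,\ldots,a_s;M)$. Unwinding the \v{C}ech differential, $x \in M$ lies in $\ker \pi$ if and only if there exist an integer $n \geq 1$, elements $y_i \in I^{(n-1)s-n}M$, and $z \in I^{(n-1)s+1}M$ such that
$$
(a_1\cdots a_s)^{n-1}x \;=\; \sum_{i=1}^s a_i^n y_i + z \quad \text{in } M.
$$
The containment $\widetilde{\Sigma}(a_1,\ldots,a_s;M) \subseteq \ker\pi$ is immediate: if $a_j x \in (a_1,\ldots,\hat{a_j},\ldots,a_s)M$, then expanding $a_1\cdots a_s\, x = (a_1\cdots\hat{a_j}\cdots a_s)(a_j x)$ satisfies the displayed equation at $n=2$ with $z=0$. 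For the reverse inclusion I would exploit the full strength of the standardness hypothesis: the vanishing $(a_1,\ldots,a_s)\cdot\H_\m^i(M/(a_1,\ldots,a_j)M)=(0)$ for all $i+j<s$, together with the $d$-sequence property for arbitrary monomials in $a_1,\ldots,a_s$, permits an inductive cleanup of the error term $z$ and cancellation of the powers $a_i^n$, reducing the displayed equation eventually to relations of the form $a_jx \in (a_1,\ldots,\hat{a_j},\ldots,a_s)M$ in the definition of $\widetilde{\Sigma}$. This inductive bookkeeping is the delicate core of the proof and is where the argument of \cite[Lemma (2.15)]{G83} is carried out; it is the main obstacle in the plan.

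For the final assertion about $[\K_{\G(\M)}]_s$, graded local duality over the Gorenstein polynomial ring $\P$ identifies $[\K_{\G(\M)}]_s$ with the Matlis dual of $[\H_\fkM^s(\G(\M))]_{-s}$, where $\fkM$ is the graded maximal ideal. The Grothendieck spectral sequence $\H^p_\m(\H^q_{\G(I)_+}(\G(\M))) \Rightarrow \H^{p+q}_\fkM(\G(\M))$ collapses in degree $-s$, since the contributions with $p \geq 1$ vanish because $\H^{s-p}_{\G(I)_+}(\G(\M))$ is concentrated in degrees strictly above $-s$. Hence $[\H_\fkM^s(\G(\M))]_{-s} \cong [\H_{\G(I)_+}^s(\G(\M))]_{-s}$, and combining with the first half of the lemma yields the desired description $[\K_{\G(\M)}]_s \cong [M/\widetilde{\Sigma}(a_1,\ldots,a_s;M)]^\vee$.
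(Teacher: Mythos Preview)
Your plan for the first isomorphism is sound and matches what the paper intends: it explicitly defers to the argument of \cite[Lemma~(2.15)]{G83}, and your direct-limit/\v{C}ech description of $[\H^s_{\G(I)_+}(\G(\M))]_{-s}$, the surjection from $M$, and the identification of the kernel with $\widetilde{\Sigma}(a_1,\ldots,a_s;M)$ are exactly how that argument runs. You correctly flag the reverse inclusion $\ker\pi\subseteq\widetilde{\Sigma}$ as the delicate step requiring the full $d$-sequence/standardness machinery.

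The gap is in your derivation of the ``Hence'' clause. You invoke the spectral sequence $\H^p_\m(\H^q_{\G(I)_+}(\G(\M)))\Rightarrow\H^{p+q}_\fkM(\G(\M))$ and claim it collapses in degree $-s$ because $\H^{s-p}_{\G(I)_+}(\G(\M))$ is concentrated in degrees strictly above $-s$ for $p\ge 1$. You give no reason for this concentration, and the only route I see to it is via the identification $\H^q_{\G(I)_+}(\G(\M))\cong\H^q_\fkM(\G(\M))$ together with the results of \cite{T} on $\H^q_\fkM$ --- which is circular, since that identification is precisely what you are trying to extract from the spectral sequence.

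The fix is simpler than the spectral sequence and makes the collapse immediate for a different reason. Because $a_1,\ldots,a_s$ is a system of parameters for $M$, the module $M/IM$ has finite length, so $\m^N M\subseteq IM$ for some $N$; hence $\m^N I^nM\subseteq I^{n+1}M$ for all $n$ and $\m^N$ annihilates $\G(\M)$. Thus $\fkM$ and $\G(I)_+$ have the same radical modulo the annihilator of $\G(\M)$, so
\[
\H^i_\fkM(\G(\M))\;=\;\H^i_{\G(I)_+}(\G(\M))\qquad\text{for every }i,
\]
with no degree restriction needed. (Equivalently, every $\H^q_{\G(I)_+}(\G(\M))$ is $\m$-torsion, so $E_2^{p,q}=0$ for $p>0$ and the spectral sequence degenerates globally.) Graded local duality then gives $[\K_{\G(\M)}]_s\cong\big[[\H^s_\fkM(\G(\M))]_{-s}\big]^\vee\cong[M/\widetilde{\Sigma}(a_1,\ldots,a_s;M)]^\vee$, as required.
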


%%%%%%%%%%%%%%%%%%%%%%%%%%%%%%%%%%%%%%%%%
%%%%%%%%%%%%%%%%%%%%%%%%%%%%%%%%%%%%%%%%%
\section{Preliminary results for the Gorenstein Rees algebra $\R(\q^d)$}
%%%%%%%%%%%%%%%%%%%%%%%%%%%%%%%%%%%%%%%%%
%%%%%%%%%%%%%%%%%%%%%%%%%%%%%%%%%%%%%%%%%
In this section we will summarize some preliminary results for the Gorensteinness of the Rees algebra $\R(\q^d)$. In what follows, we assume that the ring $A$ is a homomorphic image of a Gorenstein local ring. Suppose that $d\ge 2$ and $A$ is an unmixed local ring (i.e., every associated prime ideal of $\widehat{A}$ has same codimension). Hence, $\H^1_\m(A)$ is a finitely generated $A$-module and the canonical module $\K_A$ of $A$ is a faithful $A$-module (see, e.g., \cite[Lemma 3.1]{GNa} and \cite[(1.8)]{A}). We will use the notation $\widetilde{A}$ of Section 2. Recall that $\widetilde{A}$ is an overring of $A$ which is a finitely generated $A$-module such that $\depth_A\widetilde{A}\ge 2$ and $\H^1_\m(A)\cong\widetilde{A}/A$ as an $A$-module.

Let $\fkq$ be a parameter ideal of $A$ generated by elements $a_1,a_2,\dots ,a_d\in A$. We put 
\begin{center}
$R=\R(\q)$ and $S=\R(\q\widetilde{A})$. 
\end{center}
Then the ring $S$ is module-finite over the ring $R$, as $\widetilde{A}$ is module-finite over $A$. 
Let $\psi : C\to A$ be an epimorphism of rings, where $C$ is a Gorenstein local ring of dimension $d$. Put $$\P=C[X_1,X_2,\dots X_d]$$ that is the $\Z$-graded polynomial ring over $C$ such that $\deg X_i=1$ for all $1\le i\le d$ and $\deg c=0$ for all $c\in C$. Let $\varphi :\P\to R$  be the homomorphism of graded $C$-algebras such that $\varphi (X_i)=a_it$ for all $1\le i\le d$ and $\varphi (c)=\psi (c)$ for all $c\in C$. The rings $A$, $\widetilde{A}$, $R$, and $S$ are finitely generated graded $\P$-modules. We set $\K_\P=\P(-d)$,
\begin{align*}
&\K_{\widetilde{A}}=\Ext_{\P}^{d}(\widetilde{A},\K_\P), \\
&\K_R=\Ext_\P^{d-1}(R,\K_\P),\ and \\
&\K_{S}=\Ext_\P^{d-1}(S,\K_\P).
\end{align*}
They are finitely generated graded $\P$-modules. The graded module $\K_{\widetilde{A}}$ is concentrated in degree $0$. Let $X$ be an $\widetilde{A}$-module such that $$X\cong\K_{\widetilde{A}}$$ as an $\widetilde{A}$-module. Notice that $\K_{\widetilde{A}}\cong\K_A$ as an $A$-module because $d\ge 2$ and note that $\mathrm{a}(\G(\q))=-d$ by Lemma \ref{aG}. Let 
$$
\k=\{\k_n\}_{n\in\Z}
$$ 
be a canonical $\q$-filtration of $X$ with respect to $A$ (see Proposition \ref{full canonical filtration}). Then we have $\k_n=X$ for all $n\le d-1$ by Lemma \ref{ll}, in particular, $\k_{d-1}=X$.

Let $\mu_A(\ \ )$ be the number of minimal generators for an $A$-module, $[\quad]^\vee$ the Matlis dual functor with respect to $C$, and $[\quad]^{(i)}$ the $i^{\th}$ Veronese functor for each integer $i$. Let us begin with the following.

\begin{lem}\label{necessary1}
If the Rees algebra $\R(\q^d)$ is a Gorenstein ring, then the following two assertions hold.
\begin{enumerate}
	\item[$(1)$] $\k_d\cong A$ as an $A$-module and hence $\mu_A(\k_d)=1$.
	\item[$(2)$] $X/ \k_d\cong [\K_{\G(\q)}]_d$ as an $A/\q$-module and then $\k_d\subsetneq X$.
\end{enumerate}
\end{lem}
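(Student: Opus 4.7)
The plan is to exploit Lemma~\ref{canonical filtration}(1), which identifies $\K_R$ with $\R(\k)_+$, and then pass to the $d$-th Veronese subring $R^{(d)}=\R(\q^d)$. Since the Veronese functor commutes with graded local cohomology at the graded maximal ideal, and therefore with graded Matlis duality, one obtains
\[
\K_{R^{(d)}} \cong [\K_R]^{(d)} \cong [\R(\k)_+]^{(d)}
\]
as graded $R^{(d)}$-modules. The degree-$n$ component of the right-hand side is $\k_{dn}$ for $n\ge 1$ and vanishes for $n\le 0$. The Gorenstein hypothesis on $\R(\q^d)$ forces $\K_{R^{(d)}}$ to be a shifted copy $R^{(d)}(c)$ of the ring itself, and the vanishing pattern in degrees $\le 0$ combined with the nonvanishing in degree $1$ pins $c$ down to $-1$. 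Reading off the degree-$1$ component then yields $\k_d \cong (R^{(d)})_0 = A$, which proves $(1)$; in particular $\mu_A(\k_d)=1$.

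For $(2)$, I would evaluate the exact sequence of Lemma~\ref{canonical filtration}(2) in degree $d$:
\[
0 \to [\G(\k)(-1)]_d \to [\K_{\G(\q)}]_d \to [\Ext^d_\P(R,\K_\P)]_d.
\]
By Lemma~\ref{aG} one has $\mathrm{a}(\G(\q))=-d$, whence Lemma~\ref{ll} gives $\k_{d-1}=X$, so $[\G(\k)(-1)]_d = \k_{d-1}/\k_d = X/\k_d$. To upgrade this to an isomorphism $X/\k_d \cong [\K_{\G(\q)}]_d$, I would show the rightmost term vanishes: by graded local duality over $\P$, it is graded Matlis dual to $[\H^d_\fkM(R)]_{-d}$, which lies in the $d$-th Veronese $[\H^d_\fkM(R)]^{(d)} \cong \H^d_{\fkM^{(d)}}(R^{(d)})$; and this vanishes because the Gorenstein ring $\R(\q^d)$ is Cohen-Macaulay of dimension $d+1$. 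The $A/\q$-module structure on $X/\k_d$ matches that on $[\K_{\G(\q)}]_d$ via the filtration property $\q X = \q\k_{d-1} \subseteq \k_d$. Finally, the strict inclusion $\k_d \subsetneq X$ follows from $[\K_{\G(\q)}]_d \neq 0$, which in turn follows from $\mathrm{a}(\G(\q))=-d$: by definition of the $\mathrm{a}$-invariant $[\H^d_\fkM(\G(\q))]_{-d}\neq 0$, and graded local duality identifies its Matlis dual with $[\K_{\G(\q)}]_d$.

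The main technical point I expect is the Veronese-to-canonical module comparison and the bookkeeping of grading shifts. Once the identification $\K_{R^{(d)}}\cong [\R(\k)_+]^{(d)}$ is in hand with the correct grading, part $(1)$ reduces to pinning down the shift from the vanishing/nonvanishing dichotomy, while the vanishing needed in $(2)$ reduces to the Cohen-Macaulayness of $\R(\q^d)$ via the Veronese commuting with graded local cohomology. Both reductions are routine modulo careful attention to the conventions.
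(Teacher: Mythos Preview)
Your proposal is correct and follows essentially the same route as the paper's proof: both identify $\K_{R^{(d)}}\cong[\K_R]^{(d)}$ via the Veronese--local cohomology comparison, read off $\k_d$ as the degree-$1$ piece, and for $(2)$ kill $[\Ext^d_\P(R,\K_\P)]_d$ using the Cohen--Macaulayness of $R^{(d)}$ to obtain $X/\k_d\cong[\K_{\G(\q)}]_d$ from Lemma~\ref{canonical filtration}(2). The only cosmetic difference is that the paper pins down the shift directly as $\K_{\R(\q^d)}\cong\R(\q^d)(-1)$ via $\mathrm{a}(\R(\q^d))=-1$ (Lemma~\ref{ai}), whereas you argue it from the vanishing/nonvanishing pattern; for the latter you should add one line that $\k_d\supseteq\q\k_{d-1}=\q X\neq 0$ (since $X\cong\K_A$ is faithful), which justifies the ``nonvanishing in degree~$1$'' claim.
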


\begin{proof}
Notice that $\R(\fkq^d)=R^{(d)}$ and that $[\H_\fkM^j(R)]^{(i)}\cong\H_{\fkM^{(i)}}^j(R^{(i)})$ as a graded $R^{(i)}$-module for all integers $i,j\ge 0$ (see, e.g., \cite[Proposition (47.5)]{HIO}). 

Since $\K_{R^{(d)}}\cong[\K_R]^{(d)}$ as a graded $R^{(d)}$-module and $[\K_R]_d\cong \k_d$ as an $A$-module, we get $[\K_{\R(\q^d)}]_1\cong \k_d$ as an $A$-module. Since the Rees algebra $\R(\q^d)$ is a Gorenstein, we have 
$$
\K_{\R(\q^d)}\cong\R(\q^d)(-1)
$$ 
as a graded $\R(\q^d)$-module. Looking at the first homogeneous component of it, 
we see $\k_d\cong A$ as an $A$-module.

Since $\R(\fkq^d)\ (=R^{(d)})$ is a Cohen-Macaulay ring and $\dim\R(\fkq^d)=d+1$, we have $[\H_\fkM^d(R)]_{-d}=(0)$, and hence $[\G(\k)]_{d-1}\cong [\K_{\G(\q)}]_d$ as an $A$-module by Lemma \ref {canonical filtration}. Therefore, $X/ \k_d\cong [\K_{\G(\q)}]_d$ as an $A/\q$-module (recall that $X=\k_{d-1}$). From $\mathrm{a}(\G(\q))=-d$ we obtain $[\K_{\G(\q)}]_d\neq (0)$, and then $\k_d\subsetneq X$.
\end{proof}

\begin{lem}\label{kd1}
Assume that $\mu_A(\k_d)=1$. 
Then the following four assertions hold.
\begin{enumerate}
	\item[$(1)$] $\k_d\cong A$ as an $A$-module.
	\item[$(2)$] $X\cong \widetilde{A}$ as an $\widetilde{A}$-module.
	\item[$(3)$] $X/\k_d\cong \widetilde{A}/A$ as an $A$-module.
	\item[$(4)$] $\q\widetilde{A}\subseteq A$.
\end{enumerate}
\end{lem}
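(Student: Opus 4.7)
The plan is to exploit the cyclicity of $\k_d$ by fixing a generator $\xi\in X$ with $\k_d=A\xi$, and to study the $\widetilde{A}$-linear map $\phi\colon\widetilde{A}\to X$ defined by $\phi(\tilde{a})=\tilde{a}\xi$. All four assertions will follow once $\phi$ is shown to be an isomorphism. The starting observation is that $\k=\{\k_n\}$ is a $\q$-filtration with $\k_{d-1}=X$ (by Lemma~\ref{ll}, since $\mathrm{a}(\G(\q))=-d$ by Lemma~\ref{aG}), hence the key inclusion
$$
\q X \;=\; \q\k_{d-1} \;\subseteq\; \k_d \;=\; A\xi
$$
drives every subsequent step. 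For (1), I first show $\Ann_A(\xi)=0$: if $a\xi=0$ then $a\cdot A\xi=0$, so $a\q X=0$; since $X\cong\K_A$ is a faithful $A$-module, $a\q=0$ in $A$, and choosing $a_1\in\q$, which is a non-zero-divisor on $A$ because $A$ is unmixed and $a_1$ is part of a system of parameters, forces $a=0$. Hence $\k_d\cong A$.

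For (2), injectivity of $\phi$ comes from essentially the same argument with $A$ replaced by $\widetilde{A}$: any $\tilde{a}\in\Ker\phi$ satisfies $\tilde{a}\q X=0$, and the faithfulness of $X\cong\K_{\widetilde{A}}$ over $\widetilde{A}$---which holds because $\widetilde{A}$ is equidimensional, its minimal primes corresponding to those of $A$ via $\widetilde{A}\subseteq\Q(A)$ and the finite length of $\widetilde{A}/A$---yields $\tilde{a}\q=0$; then $a_1$ being a non-zero-divisor on $\widetilde{A}$ (for the same equidimensionality reason) forces $\tilde{a}=0$. For surjectivity, the cokernel $X/\widetilde{A}\xi$ is annihilated by the $\m$-primary ideal $\q$ and so has finite length. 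With injectivity in hand, $\widetilde{A}\xi\cong\widetilde{A}$, so $\depth_A\widetilde{A}\xi\geq 2$ by Proposition~\ref{G80}, and $\depth_A X=\depth_A\K_A\geq 2$ since $\K_A$ satisfies $(S_2)$. The long exact sequence of local cohomology for $0\to\widetilde{A}\xi\to X\to X/\widetilde{A}\xi\to 0$ then sandwiches $\H^0_\m(X/\widetilde{A}\xi)$ between the vanishing modules $\H^0_\m(X)$ and $\H^1_\m(\widetilde{A}\xi)$, and since $X/\widetilde{A}\xi$ is $\m$-torsion, it must vanish. Thus $\phi$ is an isomorphism.

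Assertions (3) and (4) are then immediate: (3) follows from $\phi(A)=\k_d$ and $\phi(\widetilde{A})=X$, giving $X/\k_d\cong\widetilde{A}/A$; (4) follows because for $q\in\q$ and $\tilde{a}\in\widetilde{A}$, one has $\phi(q\tilde{a})=q\phi(\tilde{a})\in qX\subseteq A\xi=\phi(A)$, so injectivity of $\phi$ places $q\tilde{a}$ in $A$, yielding $\q\widetilde{A}\subseteq A$. The main delicate point will be the structural side facts about $\widetilde{A}$---its equidimensionality, the faithfulness of $\K_{\widetilde{A}}$ over it, and the fact that $a_1$ remains a non-zero-divisor on it---all of which stem from unmixedness of $A$ together with the construction of $\widetilde{A}$ inside $\Q(A)$ developed in Section~2.
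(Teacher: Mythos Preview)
Your proof is correct and follows essentially the same strategy as the paper: fix a generator $y$ of $\k_d$, show $(0):_{\widetilde{A}}y=(0)$, and then use a depth argument on the short exact sequence $0\to\widetilde{A}y\to X\to X/\widetilde{A}y\to 0$ to force $X=\widetilde{A}y$. The one technical difference is that for injectivity you invoke faithfulness of $\K_{\widetilde{A}}$ over the semilocal ring $\widetilde{A}$, whereas the paper sidesteps this by choosing a regular element $a\in\q$ with $a\H^1_\m(A)=0$ (e.g.\ $a=a_1^N$ for $N\gg 0$), so that $a\alpha\in A$ and faithfulness of $X\cong\K_A$ over $A$ suffices; your route is fine, but the cleanest justification of faithfulness over $\widetilde{A}$ is precisely this reduction to $A$ rather than an appeal to equidimensionality.
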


\begin{proof}
We write $\k_d=Ay$ for some $y\in \k_d$. Since $X=\k_{d-1}$, we have $\q X\subseteq Ay$. 
\begin{claim}\label{y}
$(0):_{\widetilde{A}}y=(0)$ and then $X=\widetilde{A}y$.
\end{claim}
\begin{proof}
Let $\alpha$ be an element of $\widetilde{A}$ such that $\alpha y=0$. Take an $A$-regular element $a\in\q$ such that $a\H^1_\m(A)=(0)$ (recall that $\H^1_\m(A)$ is a finitely generated $A$-module). Then $a\alpha \in A$, as $a\in (0):_A\H^1_\m(A)=A:\widetilde{A}$. Since $\q X\subseteq Ay$, we get $a\alpha X=(0)$, and hence $a\alpha=0$ because $X$ is a faithful $A$-module (recall that $X\cong\K_{A}$ as an $A$-module and that the ring $A$ is unmixed). Then $\alpha=0$, as $a$ is also an $\widetilde{A}$-regular element, and therefore $(0):_{\widetilde{A}}y=(0)$. 

Let us show $X=\widetilde{A}y$. The inclusion $X\supseteq\widetilde{A}y$ holds, as $X$ is an $\widetilde{A}$-module. Suppose that $X/\widetilde{A}y\neq (0)$. Since $\q X\subseteq Ay\subseteq\widetilde{A}y\subseteq X$, we have $0<\ell_{A}(X/\widetilde{A}y)<\infty$.
Look at the natural exact sequence 
$$
0\to\widetilde{A}y\to X\to X/\widetilde{A}y\to 0
$$
of $A$-modules, and we see $\depth_{A}\widetilde{A}y=1$. But this is a contradiction because $\depth_{A}\widetilde{A}\ge 2$ and $\widetilde{A}\cong \widetilde{A}y$ as an $\widetilde{A}$-module (recall that $(0):_{\widetilde{A}}y=(0)$). Therefore, $X=\widetilde{A}y$. 
\end{proof}

Since $\k_d=Ay$ and $X=\widetilde{A}y$, the assertions (1), (2), and (3) follow from the equality $(0):_{\widetilde{A}}y=(0)$. The assertion (4) implies that $\q\widetilde{A}\subseteq A$, as $\q X\subseteq \k_d$.
\end{proof}

\begin{prop}\label{necessary2}
Assume the Rees algebra $\R(\fkq^d)$ is Gorenstein. Then $\depth A=1$ and $\mathrm{r}_A(A)=1$.
\end{prop}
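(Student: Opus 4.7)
The plan is to reduce the conclusion to a one-dimensional socle statement and then extract that one-dimensionality from the cyclicity of $\k_d$ obtained in the previous lemmas.

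First I would establish $\depth A = 1$. By Lemma \ref{necessary1}(2) we have $\k_d \subsetneq X$, and Lemma \ref{kd1}(3) then yields $\widetilde{A}/A \cong X/\k_d \neq 0$. Since $A$ is unmixed with $d \geq 2$, every associated prime of $\widehat{A}$ is a minimal prime of dimension $d \geq 1$, so $\depth A \geq 1$; hence the discussion preceding Proposition \ref{G80} gives $\H^1_\m(A) \cong \widetilde{A}/A$, and the non-vanishing just obtained forces $\depth A = 1$.

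Next I would reduce the type computation to a socle. Applying $\Hom_A(A/\m, -)$ to the short exact sequence $0 \to A \to \widetilde{A} \to \widetilde{A}/A \to 0$, the conditions $\depth A = 1$ and $\depth_A \widetilde{A} \geq 2$ (Proposition \ref{G80}) force every term but one to vanish, producing
\[
\Ext^1_A(A/\m, A) \cong \Hom_A(A/\m, \widetilde{A}/A) = \Soc(\widetilde{A}/A).
\]
Thus $\mathrm{r}_A(A) = \dim_{A/\m}\Soc(\widetilde{A}/A)$, and chaining Lemma \ref{kd1}(3) with Lemma \ref{necessary1}(2) identifies this with $\dim_{A/\m}\Soc\,[\K_{\G(\q)}]_d$. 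Graded local duality then rewrites the latter as $\mu_A\bigl([\H^d_\fkM(\G(\q))]_{-d}\bigr)$, so it remains to show that this module is cyclic over $A$.

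For the cyclicity of $[\H^d_\fkM(\G(\q))]_{-d}$, I would exploit the Gorenstein hypothesis $\K_{R^{(d)}} \cong R^{(d)}(-1)$: its first component is $\k_d$, which is cyclic, and dually $[\H^{d+1}_\fkM(R)]_{-d}$ carries a one-dimensional $A$-socle. Comparing, in degree $-d$, the long exact sequences in local cohomology induced by $(\sharp_1)$ and $(\sharp_2)$ from Section 3, and using $[\K_R]_{d-1} = X \cong \K_A$ (from $\mathrm{a}(\G(\q)) = -d$ via Lemmas \ref{aG} and \ref{ll}), one would trace the one-dimensional socle of $[\H^{d+1}_\fkM(R)]_{-d}$ back to the cyclicity of $[\H^d_\fkM(\G(\q))]_{-d}$. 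The main obstacle is exactly this last tracing step: one cannot shortcut via Lemma \ref{top} — which would immediately yield $[\K_{\G(\q)}]_d \cong [A/\widetilde{\Sigma}(a_1,\ldots,a_d;A)]^\vee$ with visibly one-dimensional socle — because at this point in the paper we do not yet know that $\q$ is a standard system of parameters.
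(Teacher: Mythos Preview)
Your reduction is exactly the paper's: deduce $\widetilde{A}/A\neq 0$ from Lemmas \ref{necessary1} and \ref{kd1}, get $\depth A=1$, identify $X/\k_d\cong[\K_{\G(\q)}]_d$, and reduce $\mathrm r_A(A)=1$ to the cyclicity of $[\H^d_\fkM(\G(\q))]_{-d}$. The divergence is only at the very last step, and there your proposal has a genuine gap: you name the obstacle (tracing the one–dimensional socle of $[\H^{d+1}_\fkM(R)]_{-d}$ through the long exact sequences $(\sharp_1)$, $(\sharp_2)$) but do not carry it out. Following your sketch one lands on a short exact sequence $0\to[\H^d_\fkM(\G(\q))]_{-d}\to[\K_A]^\vee\to A^\vee\to 0$, and it is not clear how to read off cyclicity of the left term from this without further input. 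So as written the argument is incomplete.

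The paper closes the gap by invoking Lemma \ref{top}. You are right that Lemma \ref{top} is \emph{stated} under the standing hypothesis that the parameters are standard, so the citation looks premature. But the only thing actually used is that $[\H^d_\fkM(\G(\q))]_{-d}$ is a cyclic $A$-module, and this holds for \emph{any} parameter ideal $\q$: since $\q$ is $\m$-primary one has $\H^d_\fkM(\G(\q))=\H^d_{\G(\q)_+}(\G(\q))$, and the natural surjection $(A/\q)[X_1,\dots,X_d]\twoheadrightarrow\G(\q)$ induces a surjection on top local cohomology, whose degree $-d$ piece on the polynomial side is the cyclic module $A/\q\cdot X_1^{-1}\cdots X_d^{-1}$. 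Equivalently, the proof of Lemma \ref{aG} already records that $[\H^d_\fkM(\G(\q))]_{-d}$ is finitely generated over $A$ with $[\H^d_\fkM(\G(\q))]_{-d}\otimes_A A/\m\cong[\H^d_\fkM(\G(\q)/\m\G(\q))]_{-d}\cong A/\m$, so Nakayama gives cyclicity. Either observation replaces your unfinished tracing step immediately; the rest of your argument then coincides with the paper's.
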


\begin{proof}
By Lemma  \ref{necessary1} and Lemma \ref{kd1}, we have $$\widetilde{A}/A\cong X/\k_d\neq 0,$$ so that $A\neq\widetilde{A}$ and hence $\depth A=1$ by Proposition \ref{structure}.
To prove $\mathrm{r}_A(A)=1$, it is enough to show $\mathrm{r}_A(\widetilde{A}/A)=1$ (recall that $\widetilde{A}/A\cong\H_\m^1(A)$ as an $A$-module). 
$X/\k_d\cong [\K_{\G(\q)}]_d$ as an $A$-module by Lemma \ref {necessary1}. Since $[\K_{\G(\q)}]_d\cong [[\H_\fkM^d(\G(\q))]_{-d}]^\vee$, we get $$[\H_\fkM^d(\G(\q))]_{-d}\cong [X/\k_d]^\vee$$ as an $A$-module. Thanks to Lemma \ref{top}, we see $[\H_\fkM^d(\G(\q))]_{-d}$ is a cyclic $A$-module and hence so is $[X/\k_d]^\vee$. Therefore, $\mathrm{r}_A(\widetilde{A}/A)=1$, as $\widetilde{A}/A\cong X/\k_d$. 
\end{proof}

Since $\H_\m^1(A)\cong \widetilde{A}/A$ as an $A$-module, the next result directly follows from Lemma \ref{kd1} and \cite[Corollary 3.7]{T}.

\begin{lem}\label{standard}
Assume that the Rees algebra $\R(\fkq^d)$ is Gorenstein. Then the system $a_1,a_2,\dots ,a_d$ of parameters for $A$ is standard if $\H^i_\m(A)=(0)$ for $i\neq 1,d$.
\end{lem}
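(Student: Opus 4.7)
The plan is to chain together the Gorenstein hypothesis on $\R(\q^d)$ with part (4) of Lemma \ref{kd1}, and then invoke a standard characterization of standard parameter ideals in generalized Cohen-Macaulay rings. First I would note that since $\R(\q^d)$ is Gorenstein, Lemma \ref{necessary1}(1) yields $\mu_A(\k_d)=1$, so the hypothesis of Lemma \ref{kd1} is satisfied and part (4) supplies the key inclusion $\q\widetilde{A}\subseteq A$.

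Next I would translate this inclusion into $\q\subseteq\fkc$. Because $\widetilde{A}/A\cong\H^1_\m(A)$ as $A$-modules (established in Section 2), one has $A:\widetilde{A}=(0):_A(\widetilde{A}/A)=\fkc$, and the containment $\q\widetilde{A}\subseteq A$ is exactly $\q\subseteq A:\widetilde{A}=\fkc$. The hypothesis $\H^i_\m(A)=(0)$ for $i\neq 1,d$ then forces $\fkc$ to coincide with the product $\prod_{i<d}\Ann_A\H^i_\m(A)$, namely the annihilator that controls standardness of systems of parameters in the generalized Cohen-Macaulay ring $A$.

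With $\q$ contained in this annihilator, I would apply \cite[Corollary 3.7]{T}, which ensures that a system of parameters of a generalized Cohen-Macaulay module lying inside the product annihilator of its lower local cohomology modules is automatically standard. This yields the desired conclusion that $a_1,a_2,\dots,a_d$ is a standard system of parameters for $A$. The only real point of care is verifying that the vanishing hypothesis on intermediate local cohomologies is exactly what reduces the product $\prod_{i<d}\Ann_A\H^i_\m(A)$ to $\fkc$; once that identification is noted, the appeal to \cite[Corollary 3.7]{T} is immediate and no further computation is required.
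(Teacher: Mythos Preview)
Your proposal is correct and follows essentially the same route as the paper: from the Gorenstein hypothesis you invoke Lemma~\ref{necessary1}(1) to obtain $\mu_A(\k_d)=1$, then Lemma~\ref{kd1}(4) gives $\q\widetilde{A}\subseteq A$, i.e.\ $\q\H^1_\m(A)=(0)$, and under the vanishing hypothesis on the intermediate local cohomologies this is exactly the input needed for \cite[Corollary~3.7]{T}. The paper's one-line proof records the same chain, just more tersely.
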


In the rest of this section, we assume that the sequence $a_1,a_2,\dots ,a_d$ is a standard system of parameters for $A$. Then we note the Rees algebra $\R(\fkq^d)$ is Cohen-Macaulay by \cite[Theorem 6.2]{T}. 

\begin{lem}\label{iff}
The Rees algebra $\R(\fkq^d)$ is Gorenstein if and only if $\mu_A(\k_d)=1$.
\end{lem}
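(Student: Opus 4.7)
The forward direction is immediate: if $\R(\q^d)$ is Gorenstein, then Lemma \ref{necessary1}(1) already gives $\mu_A(\k_d)=1$. So the plan is to prove the converse.

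Assume $\mu_A(\k_d)=1$. Since $a_1,\dots,a_d$ is a standard system of parameters, $\R(\q^d)$ is Cohen-Macaulay by \cite[Theorem 6.2]{T} (this is already noted just above the lemma). To upgrade Cohen-Macaulayness to Gorensteinness, it therefore suffices to exhibit a graded $\R(\q^d)$-module isomorphism $\K_{\R(\q^d)}\cong \R(\q^d)(-1)$. The first move is to apply Lemma \ref{kd1}: from $\mu_A(\k_d)=1$ we obtain an element $y\in X$ with $\k_d=Ay$, $X=\widetilde{A}y$, $(0):_{\widetilde{A}}y=0$, and $\q\widetilde{A}\subseteq A$. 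Combining this with Corollary \ref{concentrated} yields the explicit formula
\[
\k_n=\q^{n-d}y\quad\text{for all }n\ge d,
\]
which computes every higher graded piece of the canonical filtration in terms of the single element $y$.

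Next, I would use the Veronese identification $\K_{\R(\q^d)}\cong \bigl[\K_R\bigr]^{(d)}$ (valid because $[\H^j_{\fkM}(R)]^{(d)}\cong \H^j_{\fkM^{(d)}}(R^{(d)})$) together with the isomorphism $\K_R\cong \R(\k)_+$ from Lemma \ref{canonical filtration}(1). In degree $n\ge 1$ this gives
\[
[\K_{\R(\q^d)}]_n\;\cong\;\k_{dn}\;=\;\q^{d(n-1)}y.
\]
On the other hand $[\R(\q^d)(-1)]_n=\q^{d(n-1)}t^{d(n-1)}$. I would then define the degree-preserving map
\[
\phi:\R(\q^d)(-1)\longrightarrow \K_{\R(\q^d)},\qquad c\,t^{d(n-1)}\longmapsto (cy)\,t^{dn}\quad(c\in \q^{d(n-1)}),
\]
and verify directly from the $\R(\q)$-action on $\R(\k)$ (which is the restriction of multiplication by $t^{-1}$ on an extended Rees module, cf.\ the construction in Section 3) that $\phi$ is $\R(\q^d)$-linear.

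The remaining point is bijectivity of $\phi$ in each degree, and this is where I expect the only real (but mild) obstacle to lie: one must check both that multiplication by $y$ sends $\q^{d(n-1)}$ onto $\k_{dn}=\q^{d(n-1)}y$ (surjectivity is built in) and that it is injective. Surjectivity follows from the explicit formula $\k_{dn}=\q^{d(n-1)}y$ from Corollary \ref{concentrated} combined with $\k_d=Ay$. Injectivity follows from $(0):_{\widetilde{A}}y=0$ in Lemma \ref{kd1} (specifically Claim \ref{y}), since $\q^{d(n-1)}\subseteq A\subseteq\widetilde{A}$ and $y$ is a non-zerodivisor on $\widetilde{A}$. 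Once $\phi$ is shown to be an isomorphism of graded $\R(\q^d)$-modules, the canonical module of $\R(\q^d)$ is free of rank one, and together with the Cohen-Macaulay property this gives the Gorensteinness of $\R(\q^d)$, completing the proof.
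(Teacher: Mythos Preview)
Your proposal is correct and follows essentially the same route as the paper. The paper's proof differs only cosmetically: rather than building $\phi$ degree by degree, it defines a graded $R^{(d)}$-linear map $f:R^{(d)}(-1)\to[\R(\k)_+]^{(d)}$ by $1\mapsto t^d\otimes y$ and then checks bijectivity exactly as you do (via Corollary \ref{concentrated} and $(0):_{\widetilde{A}}y=(0)$), which spares you the separate verification of $\R(\q^d)$-linearity.
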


\begin{proof}
The only if part directly follows from Lemma \ref{necessary1}. Conversely, assume that $\mu_A(\k_d)=1$. Let $y$ be an element of $\k_d$ such that $\k_d=Ay$. Then $X=\widetilde{A}y$ by the claim above. Hence, we have 
$$
\k_n=\left\{
\begin{array}{ll}
\q^{n-d} y & \mbox{ if $n\ge d$}\cr
\widetilde{A} y & \mbox{ if $n< d$}\cr
\end{array}
\right.
$$
by Corollary \ref{concentrated}. Let $f:R^{(d)}(-1)\to [\R(\k)_+]^{(d)}$ be the homomorphism of graded $R^{(d)}$-modules such that $f(1)=t^{d}\otimes y$. Then we see $f$ is bijective because of the equalities above (recall that $(0):_{\widetilde{A}}y=(0)$ by Claim \ref{y}).
Since $$[\R(\k)_+]^{(d)}\cong [\K_R]^{(d)}\cong \K_{R^{(d)}}$$ as a graded $R^{(d)}$-module, we obtain that $\R(\q^d)(-1)\cong\K_{\R(\q^d)}$ as a graded $\R(\q^d)$-module (recall that $\R(\q^d)=R^{(d)}$). Therefore, $\R(\fkq^d)$ is a Gorenstein ring.
\end{proof}

We recall that $a_1\widetilde{A}=\cup_{n>0}\ a_1A:_A \m^n$, which is a common ideal of $A$ and $\widetilde{A}$. We put $$A'=A/a_1\widetilde{A}.$$ The sequence $a_2,a_3,\dots ,a_d$ forms a standard system of parameters for $A'$ (see \cite[Proposition 2.2]{H82}).

\begin{lem}\label{ind}
$[\K_{\R(\q A')}]_{d-1}\cong\k_{d}/a_1X$ as an $A'$-module.
\end{lem}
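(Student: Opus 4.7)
The plan is to apply the canonical-filtration machinery of Section~3 to the $(d-1)$-dimensional base ring $A'$, then identify the resulting filtration with the one induced from $\k$ by reduction modulo $a_1X$.

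First, by \cite[Proposition~2.2]{H82} (already cited in the paper), the images of $a_2,\dots,a_d$ in $A' = A/a_1\widetilde{A}$ form a standard system of parameters for the $(d-1)$-dimensional ring $A'$. The framework of Section~3 therefore applies to $(A',\q A')$: since $\mathrm{a}(\G(\q A'))=-(d-1)<0$ by Lemma~\ref{aG}, Proposition~\ref{full canonical filtration} produces a canonical $\q A'$-filtration $\bar{\k}=\{\bar{\k}_n\}$ of the full $\K_{A'}$ with respect to $A'$, and Lemma~\ref{canonical filtration}(1) identifies $[\K_{\R(\q A')}]_{d-1}$ with $\bar{\k}_{d-1}$. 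The task then reduces to proving $\bar{\k}_{d-1}\cong \k_d/a_1X$ as $A'$-modules.

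Second, I would construct a candidate filtration from $\k$ by setting $\w_n=(\k_{n+1}+a_1X)/a_1X$ for $n\in\Z$. Using the structural description $\k_n=\q^{n-d}\k_d$ for $n\ge d$ and $\k_n=X$ for $n<d$ from Corollary~\ref{concentrated}, together with $a_1X\subseteq \q X\subseteq \q\k_{d-1}\subseteq \k_d$ (so that the quotient $\k_d/a_1X$ is well-defined), one verifies that $\w=\{\w_n\}$ is a stable $\q A'$-filtration of $X/a_1X$ with $\w_n=X/a_1X$ for $n\le d-2$, $\w_{d-1}=\k_d/a_1X$, and $\w_n=(\q^{n+1-d}\k_d+a_1X)/a_1X$ for $n\ge d$.

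Third, I would identify $\w$ with $\bar{\k}$. Applying $\Ext^{\bullet}_{\P}(-,\K_{\P})$ to the short exact sequence $0\to\widetilde{A}\xrightarrow{a_1}A\to A'\to 0$ (where the first map is multiplication by $a_1$, with image $a_1\widetilde{A}\subseteq A$), and using $\Ext^d_{\P}(A',\K_{\P})=0$ (which follows from $\dim A'=d-1$) and $\Ext^{d-1}_{\P}(\widetilde{A},\K_{\P})=0$ (from $\dim\widetilde{A}=d$), yields an $A'$-linear embedding $X/a_1X\hookrightarrow \K_{A'}$; here the map $\K_A\to X$ appearing in the sequence becomes multiplication by $a_1$ after the canonical identification $\K_A\cong X$. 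Functoriality of $\Ext$ applied to the surjection $\R(\q)\twoheadrightarrow \R(\q A')$, combined with the identification $\K_{\R(\q)}\cong \R(\k)_+$ from Lemma~\ref{canonical filtration}(1), then transports $\w$ onto $\bar{\k}$ in the relevant degrees and forces $\bar{\k}_{d-1}=\w_{d-1}=\k_d/a_1X$ inside $\K_{A'}$.

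The main obstacle is this last identification: verifying that $\bar{\k}_{d-1}$ lies inside the image of the embedding $X/a_1X\hookrightarrow \K_{A'}$ (rather than spilling into the cokernel, which is controlled by $\Ext^{d+1}_{\P}(A,\K_{\P})$), and that under this embedding it agrees precisely with $\w_{d-1}$. A comparison argument in the spirit of Lemma~\ref{two canonical filtrations} together with the structural fact that $\bar{\k}$ is concentrated in degrees $d-1$ and $d$ (the analogue of Proposition~\ref{concent} for $A'$) should handle this, though some care is needed when $d=2$ since $A'$ is then only one-dimensional.
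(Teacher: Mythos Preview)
Your plan has a genuine gap at the third step, and the ``main obstacle'' you identify is in fact the entire content of the lemma. The phrase ``functoriality of $\Ext$ applied to the surjection $\R(\q)\twoheadrightarrow \R(\q A')$'' hides a dimension shift: since $\dim R=d+1$ while $\dim\R(\q A')=d$, one has $\K_R=\Ext^{d-1}_\P(R,\K_\P)$ but $\K_{\R(\q A')}=\Ext^{d}_\P(\R(\q A'),\K_\P)$. There is no direct functorial map between these; any relation must pass through the connecting homomorphism in the long exact $\Ext$-sequence, hence through the kernel $\calK=\ker\bigl(R\to\R(\q A')\bigr)$, which you never analyze. Similarly, even granting the embedding $X/a_1X\hookrightarrow\K_{A'}$, nothing in your outline explains why your filtration $\w$ should coincide with the intrinsically defined $\bar\k$; the hoped-for comparison ``in the spirit of Lemma~\ref{two canonical filtrations}'' does not apply, since that lemma compares filtrations over the \emph{same} base of the \emph{same} dimension.

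The paper's proof supplies exactly the missing ingredient. Using the identity $a_1\widetilde{A}\cap\q^{n+1}=a_1\q^n$ for standard parameters (\cite[Theorem~(2.4)]{G83}), one computes $\calK=(a_1\widetilde{A})R+a_1tR$ and obtains a second short exact sequence $0\to R(-1)\xrightarrow{a_1t}\calK\to\widetilde{A}\to 0$. Dualizing this yields $\Ext^{d-1}_\P(\calK,\K_\P)\cong[\K_R]_{\ge 2}(1)$, because $\K_{\widetilde{A}}$ sits in degree~$0$ only. Feeding this into the $\K_\P$-dual of $0\to\calK\to R\to\R(\q A')\to 0$ gives
\[
0\to\K_R\xrightarrow{\ a_1t\ }[\K_R]_{\ge 2}(1)\to\K_{\R(\q A')}\to\Ext^{d}_\P(R,\K_\P),
\]
and in degree $d-1$ the last term vanishes by \cite[Theorem~6.2]{T}, leaving $0\to X\xrightarrow{a_1}\k_d\to[\K_{\R(\q A')}]_{d-1}\to 0$. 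This kernel computation is the idea your plan lacks; without it, the identification $\bar\k_{d-1}=\k_d/a_1X$ remains an assertion rather than a proof.
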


\begin{proof}
We consider the epimorphism $\varepsilon : R\to\R(\q A')$ of Rees algebras induced by the natural epimorphism $A\to A'$ of rings. Put $\calK=\Ker \varepsilon $. Since the sequence $a_1,a_2,\dots ,a_d$ is a standard system of parameters for $A$, we obtain $$a_1\widetilde{A}\cap\q^{n+1}=a_1\q^{n}$$ for all integers $n\ge 0$ from \cite[Theorem (2.4)]{G83}, and therefore $$\calK=(a_1\widetilde{A})R+a_1tR.$$ Define a graded $R$-homomorphism $f:R(-1)\to\calK$ carrying $1$ to $a_1t$. Put $\calC=\Coker f$. Then $\calC =[\calC ]_0\cong \widetilde{A}$ as an $R$-module. Hence, we have two exact sequences\begin{center}
 $0\to\calK\xrightarrow{i} R\to\R(\q A')\to 0$ and $0\to R(-1)\xrightarrow{f} \calK\to\widetilde{A}\to 0$ 
\end{center}
of graded $R$-modules. Taking $\K_\P$-dual of them, we get the exact sequences
$$
0\to\K_R\to\Ext_\P^{d-1}(\calK,\K_\P)\to\K_{\R(\q A')}\to\Ext_\P^{d}(R,\K_\P)
$$
and
$$
0\to\Ext_\P^{d-1}(\calK,\K_\P)\to\K_R(1)\to\Ext_\P^{d}(\widetilde{A},\K_\P)
$$
of graded $R$-modules. Since $\mathrm{a}(R)=\mathrm{a}(\R(\q A'))=-1$, we obtain $[\Ext_\P^{d-1}(\calK,\K_\P)]_n=(0)$ for all integers $n\le 0$. Since the graded $R$-module $\Ext_\P^{d}(\widetilde{A},\K_\P)$ is concentrated in degree $0$, we get $$\Ext_\P^{d-1}(\calK,\K_\P)\cong[\K_R]_{\ge2}(1)$$ as a graded $R$-module. Thus we get the exact sequence
$$
0\to\K_R\to [\K_R]_{\ge2}(1)\to\K_{\R(\q A')}\to\Ext_\P^{d}(R,\K_\P)
$$
of graded $R$-modules. The above map $\K_R\to [\K_R]_{\ge2}(1)$ is induced by multiplication by $a_1t$ on $\K_R$, as so is $i\circ f$. Looking at the $(d-1)^{\th}$ degree of homogeneous component of the exact sequence above, we get the exact sequence
$$
0\to X\to \k_{d}\to [\K_{\R(\q A')}]_{d-1}\to 0
$$ 
of $A$-modules (recall that $\widehat{C}\otimes_C\Ext_\P^{d}(R,\K_\P)\cong[\H_\fkM^d(R)]^\vee$ as a graded $R$-module and that $[\H_\fkM^d(R)]_{1-d}=(0)$ by \cite[Theorem 6.2]{T}). Therefore, we get $\k_{d}/a_1X\cong[\K_{\R(\q A')}]_{d-1}$ as an $A'$-module.
\end{proof}

\begin{lem}\label{ind3}
Assume that $d\ge 3$ and $\K_{\widetilde{A}}\cong \widetilde{A}$ as an $\widetilde{A}$-module. Then $\R(\fkq^d)$ is a Gorenstein ring if and only if so is $\R(\fkq^{d-1}A')$.
\end{lem}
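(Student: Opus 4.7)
The plan is to apply Lemma~\ref{iff} to both $A$ (with $\fkq$) and $A'$ (with $\fkq A'$), recasting each Gorensteinness statement as a cyclicity condition on the top piece of a canonical filtration, and then to identify these two pieces via Lemma~\ref{ind}.

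First I would verify that $A'$ satisfies the standing hypotheses of Section~4 with dimension $d-1\ge 2$. The exact sequence
\[
0 \to A' \to \widetilde{A}/a_1\widetilde{A} \to \widetilde{A}/A \to 0,
\]
together with the fact that $\widetilde{A}/a_1\widetilde{A}$ is Cohen-Macaulay (because $\widetilde{A}$ is Gorenstein and $a_1$ is regular on it) and that $\widetilde{A}/A$ has finite length, shows that $A'$ is unmixed. The sequence $a_2,\dots,a_d$ is a standard system of parameters for $A'$ by \cite[Proposition~2.2]{H82}, and Corollary~\ref{minimality} identifies $\widetilde{A'}$ with $\widetilde{A}/a_1\widetilde{A}$, which is Gorenstein; in particular $\K_{\widetilde{A'}}\cong\widetilde{A'}$. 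Hence Lemma~\ref{iff} applies in both settings and yields that $\R(\fkq^d)$ is Gorenstein if and only if $\mu_A(\k_d)=1$, and that $\R(\fkq^{d-1}A')$ is Gorenstein if and only if $\mu_{A'}(\k'_{d-1})=1$, where $\k'$ is a canonical $\fkq A'$-filtration of $\K_{A'}$ with respect to $A'$. Lemma~\ref{canonical filtration}(1) applied to $A'$ gives $[\K_{\R(\fkq A')}]_{d-1}\cong\k'_{d-1}$, and Lemma~\ref{ind} identifies this with $\k_d/a_1X$ as an $A'$-module. Therefore the problem reduces to proving the equivalence $\mu_A(\k_d)=1\iff\mu_{A'}(\k_d/a_1X)=1$.

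The forward direction is immediate from Lemma~\ref{kd1}: if $\mu_A(\k_d)=1$, one has $\k_d=Ay$ and $X=\widetilde{A}y$ with $(0):_{\widetilde{A}}y=0$, so $\k_d/a_1X=Ay/(a_1\widetilde{A})y\cong A/a_1\widetilde{A}=A'$, which is cyclic over $A'$.

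The main obstacle is the reverse direction. The canonical surjection $\k_d\twoheadrightarrow\k_d/a_1X$ gives $\mu_{A'}(\k_d/a_1X)\le\mu_A(\k_d)$, with equality precisely when $a_1X\subseteq\fkm\k_d$, so the crux is to establish this inclusion under the hypothesis $\K_{\widetilde{A}}\cong\widetilde{A}$. Using the identification $X=\widetilde{A}y_0$, write $\k_d=Jy_0$ where $J=\{u\in\widetilde{A}:uy_0\in\k_d\}$ is an $A$-submodule of $\widetilde{A}$ containing $\fkq\widetilde{A}$. Applying Lemma~\ref{kd1}(1) to $A'$ gives $J/a_1\widetilde{A}\cong A'$, so $J=Aw+a_1\widetilde{A}$ for some $w\in J$. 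The goal is to exhibit a representative $w$ of the cyclic generator of $J/a_1\widetilde{A}$ that is a unit in $\widetilde{A}$: once this is done, for every $u\in\widetilde{A}$ one has $a_1u=(a_1uw^{-1})w\in Aw$, because $a_1uw^{-1}\in a_1\widetilde{A}\subseteq A$; hence $a_1\widetilde{A}\subseteq Aw$, $J=Aw$, and $\k_d=Awy_0$ is cyclic over $A$. Producing such a unit representative is the delicate step and should combine the Gorenstein structure of $\widetilde{A}$ with the specific shape of $\k_d$ coming from the canonical filtration to rule out that $J$ is contained in the Jacobson radical of $\widetilde{A}$.
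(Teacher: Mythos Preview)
Your framework is essentially the paper's: reduce via Lemma~\ref{iff} and Lemma~\ref{ind} to the equivalence $\mu_A(\k_d)=1\iff\mu_{A'}(\k_d/a_1X)=1$, and identify the hard direction as showing that the generator $w$ of $J=Aw+a_1\widetilde{A}$ is a unit in $\widetilde{A}$. But you stop precisely at the crucial step, saying only that it ``should combine the Gorenstein structure of $\widetilde{A}$ with the specific shape of $\k_d$'' --- this is a genuine gap, not a routine verification left to the reader.

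The missing idea is a height argument, and it is where the hypothesis $d\ge 3$ actually enters. You have already observed that $J\supseteq\fkq\widetilde{A}$, hence $J\widetilde{A}=(w,a_1)\widetilde{A}$ contains the parameter ideal $\fkq\widetilde{A}$, which has height $d$ in $\widetilde{A}$. If $(w,a_1)\widetilde{A}$ were a proper ideal of $\widetilde{A}$, Krull's generalized principal ideal theorem would bound its height by $2$, contradicting $d\ge 3$. Therefore $(w,a_1)\widetilde{A}=\widetilde{A}$, and since $a_1$ lies in the Jacobson radical of $\widetilde{A}$, the element $w$ is a unit. Then, as you note, $a_1\widetilde{A}=a_1\widetilde{A}w\subseteq Aw$ (using $a_1\widetilde{A}\subseteq A$), so $J=Aw$ and $\k_d=Awy_0$ is cyclic. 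Note that the Gorenstein hypothesis on $\widetilde{A}$ is used only to identify $X$ with a free $\widetilde{A}$-module of rank one; the height argument itself needs nothing beyond Noetherianity.
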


\begin{proof}
We may assume $X=\widetilde{A}$, since $X\cong\K_{\widetilde{A}}$ as an $\widetilde{A}$-module. Recall $\k_{d-1}=X$, and we get the inclusion $\q\widetilde{A}\subseteq \k_d$. Hence, $\height_{\widetilde{A}}(\k_d\widetilde{A})=d\ge 3$ whenever $\k_d\widetilde{A}\neq \widetilde{A}$. 

Suppose that the ring $\R(\fkq^{d-1}A')$ is Gorenstein. Then $$[\K_{\R(\q A')}]_{d-1}\cong A'$$ as an $A'$-module by Lemma \ref{necessary1}. We have $[\K_{\R(\q A')}]_{d-1}\cong\k_{d}/a_1\widetilde{A}$ as an $A$-module by Lemma \ref{ind}, so that $$A'\cong \k_{d}/a_1\widetilde{A}$$ as an $A$-module, and hence we can write $\k_d=Ay+a_1\widetilde{A}$ for some $y\in\k_d$. Then $$\k_d\widetilde{A}=(y,a_1)\widetilde{A},$$ and therefore the element $y$ must be a unit of $\widetilde{A}$. Then $\k_d=Ay+a_1\widetilde{A}y=Ay$, as $a_1\widetilde{A}\subseteq A$. Therefore, $\R(\fkq^d)$ is a Gorenstein ring by Lemma \ref{iff}.

Conversely, suppose that the ring $\R(\fkq^d)$ is Gorenstein. Then $\mu_A(\k_d)=1$ by Lemma \ref{necessary1}, so that $\mu_{A'}([\K_{\R(\q A')}]_{d-1})=1$ by Lemma \ref{ind}. Therefore, $\R(\fkq^{d-1}A')$ is a Gorenstein ring by Lemma \ref{iff}.
\end{proof}

Notice that $\mathrm{a}(\G(\q \widetilde{A}))=-d<0$ by Lemma \ref{aG}.
Let $\w=\{\w_n\}_{n\in\Z}$ be a canonical $\q$-filtration of $X$ with respect to $\widetilde{A}$ such that $\w_n\subseteq\k_n$ for all $n\in\Z$ (see Lemma \ref{two canonical filtrations}) and the diagram
$$
\begin{CD}
\K_{S}@>>>\K_{R}\\
@VV\wr V     @VV\wr V \\
\R(\w)_+@>>>\R(\k)_+\\
\end{CD}
$$
of graded $R$-modules is commutative for some vertical isomorphisms, where the top homomorphism is induced by the inclusion map $R\hookrightarrow S$ and the bottom one is an inclusion map. Note that $X=\k_n=\w_n$ for all $n\le d-1$ by Lemma \ref{ll}. The next result is the key lemma to investigate the Gorensteinness of $\R(\q^d)$ in this paper.

\begin{lem}\label{kw}
$\R(\k)/\R(\w)\cong \P\otimes_C[\widetilde{A}/A]^\vee(-d)$ as a graded $\P$-module. In particular, one has $\k_d/\w_d\cong [\widetilde{A}/A]^\vee$ as an $A$-module.
\end{lem}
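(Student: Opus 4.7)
The plan is to apply $\Ext_\P^{\bullet}(-, \K_\P)$ to the short exact sequence $0 \to R \to S \to S/R \to 0$ of graded $\P$-modules and to read off $\R(\k)/\R(\w)$ from the resulting long exact sequence. By Corollary~\ref{independent2}, $S/R \cong B \otimes_C \P$ where $B := \widetilde{A}/A$ sits in degree~$0$; since $\P$ is flat over~$C$, tensoring a free $C$-resolution of $B$ with $\P$ resolves $B \otimes_C \P$ over $\P$, giving $\Ext_\P^i(S/R, \K_\P) \cong \Ext_C^i(B, \K_\P)$. Because $C$ is Gorenstein of dimension~$d$ and $B$ is Artinian, $\Ext_C^i(B, C)$ is concentrated at $i=d$ with value $B^\vee$, and since $\K_\P$ is graded-free as $C$-module (with finitely generated $B$), this yields $\Ext_\P^d(S/R, \K_\P) \cong B^\vee \otimes_C \K_\P = \P\otimes_C[\widetilde{A}/A]^\vee(-d)$, all other $\Ext_\P^i(S/R, \K_\P)$ vanishing. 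The long exact sequence collapses to
\[
0 \to \K_S \to \K_R \to \P\otimes_C[\widetilde{A}/A]^\vee(-d) \to \Ext_\P^d(S, \K_\P) \to \Ext_\P^d(R, \K_\P) \to 0,
\]
and Lemma~\ref{canonical filtration}(1) together with the compatibility diagram of Lemma~\ref{two canonical filtrations} identifies the leftmost map with the inclusion $\R(\w)_+ \hookrightarrow \R(\k)_+$. Since $\k_0 = \w_0 = X$ forces $[\R(\k)/\R(\w)]_0 = 0$, this produces an injection $\R(\k)/\R(\w) \hookrightarrow \P\otimes_C[\widetilde{A}/A]^\vee(-d)$ of graded $\P$-modules.

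To upgrade this injection to an isomorphism I plan to check it in degree~$d$ and conclude via a generation argument. Lemma~\ref{canonical filtration}(2) combined with Lemma~\ref{top} and Corollary~\ref{independent} (which gives $\widetilde{\Sigma}(a_1,\ldots,a_d;\widetilde{A})=\fka$) embeds $X/\k_d = [\G(\k)]_{d-1} \hookrightarrow [A/\fka]^\vee$ and $X/\w_d = [\G(\w)]_{d-1} \hookrightarrow [\widetilde{A}/\fka]^\vee$, with cokernels sitting inside $[\H_\fkM^d(R)]_{-d}^\vee$ and $[\H_\fkM^d(S)]_{-d}^\vee$ respectively. Now $R^{(d)} = \R(\q^d)$ and $S^{(d)} = \R((\q\widetilde{A})^d)$ are Cohen-Macaulay by \cite[Theorem 6.2]{T} (applied to the standard parameter ideal $\q$ of $A$ and the standard parameter ideal $\q\widetilde{A}$ of $\widetilde{A}$), and the Veronese identity $\H_{\fkM^{(d)}}^d(M^{(d)}) \cong [\H_\fkM^d(M)]^{(d)}$ forces $[\H_\fkM^d(R)]_{-d} = [\H_\fkM^d(S)]_{-d} = 0$, so both embeddings are equalities. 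Matlis dualizing the short exact sequence $0 \to A/\fka \to \widetilde{A}/\fka \to B \to 0$ of finite-length $A$-modules and comparing it with $0 \to \k_d/\w_d \to X/\w_d \to X/\k_d \to 0$ then shows $\ell_A(\k_d/\w_d) = \ell_A(B^\vee)$; combined with the injection $\k_d/\w_d \hookrightarrow B^\vee = [\P\otimes_C[\widetilde{A}/A]^\vee(-d)]_d$ from the previous paragraph, this forces $\k_d/\w_d \cong [\widetilde{A}/A]^\vee$.

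Finally, Corollary~\ref{concentrated} applied to both $\k$ (with respect to $A$) and $\w$ (with respect to $\widetilde{A}$, using Corollary~\ref{independent}) gives $\k_n = \q^{n-d}\k_d$ and $\w_n = \q^{n-d}\w_d$ for $n \ge d$, while $\k_n = \w_n = X$ for $n < d$ by Lemma~\ref{ll}; hence $\R(\k)/\R(\w)$ is generated in degree~$d$ by $\k_d/\w_d$. The target $\P\otimes_C[\widetilde{A}/A]^\vee(-d)$ is patently generated in degree~$d$ by $[\widetilde{A}/A]^\vee$, so the injection, being an isomorphism on these generating data, is surjective and therefore an isomorphism of graded $\P$-modules. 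The principal obstacle is the degree-$d$ identification: it rests on the Cohen-Macaulayness of the two Veronese Rees algebras $R^{(d)}$ and $S^{(d)}$, which clears the obstructions coming from $\Ext_\P^d(R, \K_\P)$ and $\Ext_\P^d(S, \K_\P)$ and lets Lemma~\ref{canonical filtration}(2) become an equality in each case.
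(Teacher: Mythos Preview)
Your proof is correct, but it takes a longer route than the paper's. Both proofs start identically: dualize $0 \to R \to S \to S/R \to 0$, compute $\Ext_\P^\bullet(S/R,\K_\P)$ via Corollary~\ref{independent2} and flat base change, and arrive at the exact sequence
\[
0 \to \K_S \to \K_R \to \P\otimes_C[\widetilde{A}/A]^\vee(-d) \to \Ext_\P^d(S,\K_\P).
\]
The difference is in how the last map is shown to vanish. The paper simply observes, citing \cite[Theorem~6.2]{T} for the standard system $a_1,\dots,a_d$ on $\widetilde{A}$ (via Corollary~\ref{independent}), that $[\Ext_\P^d(S,\K_\P)]_n=0$ for all $n\ge d-1$; since the source lives entirely in degrees $\ge d$, the map is zero and the short exact sequence falls out immediately. (The paper in fact writes $\Ext_\P^d(R,\K_\P)$ where $\Ext_\P^d(S,\K_\P)$ should appear, but the same vanishing holds for both, so the argument is unaffected.)

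You instead extract only an injection from the long exact sequence, then invoke the Cohen--Macaulayness of the Veronese rings $R^{(d)}$ and $S^{(d)}$ to conclude $[\H_\fkM^d(R)]_{-d}=[\H_\fkM^d(S)]_{-d}=0$, feed this into Lemma~\ref{canonical filtration}(2) and Lemma~\ref{top} to get exact lengths $\ell_A(X/\k_d)=\ell_A(A/\fka)$ and $\ell_A(X/\w_d)=\ell_A(\widetilde{A}/\fka)$, and hence $\ell_A(\k_d/\w_d)=\ell_A([\widetilde{A}/A]^\vee)$; this forces the injection to be an isomorphism in degree $d$, and Corollary~\ref{concentrated} (for both filtrations) lets you bootstrap by generation. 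This is sound, but note that once you know $[\Ext_\P^d(S,\K_\P)]_d=0$ from your Veronese argument, the long exact sequence in degree $d$ already gives $\k_d/\w_d\cong[\widetilde{A}/A]^\vee$ directly, making the detour through Lemma~\ref{top} and the length count unnecessary; and using the full strength of \cite[Theorem~6.2]{T} (vanishing in all degrees $\le 1-d$, not just degree $-d$) would dispense with the generation argument as well. Your approach does have the virtue of making explicit where the structural results on canonical filtrations (Corollary~\ref{concentrated}, Lemma~\ref{top}) enter.
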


\begin{proof}
We have  the exact sequence $0\to R\to S\to \P\otimes_C\widetilde{A}/A\to 0$ of graded $\P$-modules by Corollary \ref{independent2}. Taking $\K_\P$-dual of it, we get the exact sequence
$$
0\to \K_{S}\to \K_R\to \Ext_\P^d(\P\otimes_C\widetilde{A}/A,\K_\P)\to\Ext_\P^d(R,\K_\P)
$$
of graded $\P$-modules. The local duality theorem implies that $$\Ext_\P^d(\P\otimes_C\widetilde{A}/A,\K_\P)\cong \P\otimes_C[\widetilde{A}/A]^\vee(-d)$$ as a graded $\P$-module. Besides, we obtain the equalities $$[\Ext_\P^d(R,\K_\P)]_n=(0)$$ for all $n\ge d-1$ from \cite[Theorem 6.2]{T}.
Therefore, we get the exact sequence $$0\to \K_{S}\to \K_R\to \P\otimes_C[\widetilde{A}/A]^\vee(-d)\to 0$$ of graded $\P$-modules. Then the exact sequence 
$$
0\to \R(\w)_+\xrightarrow{i} \R(\k)_+\to \P\otimes_C[\widetilde{A}/A]^\vee(-d)\to 0
$$
of graded $\P$-modules follows from the diagram above, where $i$ stands for an inclusion map, and thus we obtain the required isomorphism (recall that $\w_n=\k_n$ for all integers $n\le d-1$).
\end{proof}

We put $\fka =\widetilde{\Sigma}(a_1,a_2,\dots ,a_d; A)$ that is a common ideal of $A$ and $\widetilde{A}$ (see Corollary \ref{independent}). Then a canonical $\q$-filtration of $\widetilde{A}$ with respect to $\widetilde{A}$ is uniquely determined as follows.

\begin{lem}\label{w}
Let $X=\widetilde{A}$. Then a canonical $\q$-filtration $\w=\{\w_n\}_{n\in\Z}$ of $\widetilde{A}$ with respect to $\widetilde{A}$ has the form
$$
\w_n=\left\{
\begin{array}{ll}
\q^{n-d}\fka & \mbox{ if $n\ge d$}\cr
\widetilde{A} & \mbox{ if $n< d$}.\cr
\end{array}
\right.
$$
\end{lem}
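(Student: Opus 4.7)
The first observation is that most of the formula is built into the structural result Corollary~\ref{concentrated}. By Corollary~\ref{independent}, the sequence $a_1,\ldots,a_d$ is a standard system of parameters for $\widetilde{A}$. I would therefore apply Corollary~\ref{concentrated} with $M=\widetilde{A}$ and $I=\q$ (noting $\q^n\widetilde{A}=(\q\widetilde{A})^n$) to the canonical $\q$-filtration $\w$ of $X=\widetilde{A}$ with respect to $\widetilde{A}$, giving immediately $\w_n=\q^{n-d}\w_d$ for $n\ge d$ and $\w_n=\widetilde{A}$ for $n<d$. The entire lemma therefore reduces to the single identification $\w_d=\fka$.

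For this identification, the hypothesis $X=\widetilde{A}$ says $\K_{\widetilde{A}}\cong\widetilde{A}$, i.e., $\widetilde{A}$ is quasi-Gorenstein; combined with $\depth_A\widetilde{A}\ge 2$ from Proposition~\ref{G80} this makes $\widetilde{A}$ Cohen--Macaulay, hence Gorenstein. In particular $a_1,\ldots,a_d$ is a regular sequence on $\widetilde{A}$, so $S=\R(\q\widetilde{A})$ is Cohen--Macaulay of dimension $d+1$, whence $\H^d_\fkM(S)=0$ and graded local duality yields $\Ext_\P^d(S,\K_\P)=0$. Also Corollary~\ref{independentCM} gives $\fka=\q\widetilde{A}$. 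The exact sequence of Lemma~\ref{canonical filtration}(2) then becomes an isomorphism $\G(\w)(-1)\xrightarrow{\sim}\K_{\G(\q\widetilde{A})}$, and taking the degree-$d$ component and combining with Lemma~\ref{top} applied to $\widetilde{A}$ (using $\widetilde{\Sigma}(a_1,\ldots,a_d;\widetilde{A})=\fka$ again by Corollary~\ref{independent}) I obtain
$$
\widetilde{A}/\w_d \;=\; [\G(\w)]_{d-1} \;\cong\; [\K_{\G(\q\widetilde{A})}]_d \;\cong\; [\widetilde{A}/\fka]^\vee \;\cong\; \widetilde{A}/\fka,
$$
the last step using that $\widetilde{A}/\fka=\widetilde{A}/\q\widetilde{A}$ is (a finite product of) Gorenstein Artinian ring(s) and hence self-dual under Matlis duality. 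Finally the filtration property $\q\cdot\w_{d-1}\subseteq\w_d$ gives $\fka=\q\widetilde{A}\subseteq\w_d$, and the equality of lengths $\ell_A(\widetilde{A}/\w_d)=\ell_A(\widetilde{A}/\fka)$ forces $\w_d=\fka$.

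The delicate point is the passage from quasi-Gorensteinness plus $\depth\ge 2$ to full Cohen--Macaulayness of $\widetilde{A}$; this is what drives both the vanishing $\Ext_\P^d(S,\K_\P)=0$ and the self-duality $[\widetilde{A}/\fka]^\vee\cong\widetilde{A}/\fka$. In the ambient setting of the paper this causes no issue because Lemma~\ref{w} is used in conjunction with Theorem~\ref{gor}, but a proof self-contained under the bare hypothesis $X=\widetilde{A}$ would instead have to track the cokernel map in Lemma~\ref{canonical filtration}(2) directly in degree $d$.
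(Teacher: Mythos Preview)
Your reduction via Corollary~\ref{concentrated} and your overall architecture match the paper's proof closely; the issue is the step you yourself flag as ``delicate.'' The inference \emph{quasi-Gorenstein plus $\depth\ge 2$ implies Cohen--Macaulay} is simply false in general (there are well-known non-CM quasi-Gorenstein rings satisfying $(S_2)$), and nothing in the standing hypotheses of Section~4 rescues it: those hypotheses only give that $a_1,\dots,a_d$ is standard for $A$, not that $\H^i_\m(A)=0$ for $i\neq 1,d$. Your fallback claim that ``in the ambient setting this causes no issue'' is also not right: Lemma~\ref{w} is invoked in the proof of Proposition~\ref{necessary3}, which is stated and used without any Cohen--Macaulayness assumption on~$\widetilde{A}$. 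So the lemma must be proved under the bare hypothesis $X=\widetilde{A}$, and your argument does not do this.

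The paper's proof avoids Cohen--Macaulayness of~$\widetilde{A}$ at each of the three places where you use it. First, for the vanishing needed in Lemma~\ref{canonical filtration}(2), one does not need $\Ext_\P^d(S,\K_\P)=0$ globally, only in degree~$d$; this is $[\H_\fkM^d(\R(\q\widetilde{A}))]_{-d}=0$, which follows from \cite[Theorem~6.2]{T} because $a_1,\dots,a_d$ is a \emph{standard} system of parameters for $\widetilde{A}$ (Corollary~\ref{independent}). Second, to obtain $\fka\subseteq\w_d$ one does not need $\fka=\q\widetilde{A}$: since $\widetilde{A}/\w_d\cong[\widetilde{A}/\fka]^\vee$ and $\fka$ annihilates $\widetilde{A}/\fka$, it annihilates the Matlis dual as well, whence $\fka\cdot 1\subseteq\w_d$. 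Third, the self-duality $[\widetilde{A}/\fka]^\vee\cong\widetilde{A}/\fka$ is never needed; Matlis duality preserves length, so $\ell_A(\widetilde{A}/\w_d)=\ell_A(\widetilde{A}/\fka)$, and together with $\fka\subseteq\w_d$ this forces $\w_d=\fka$. None of these steps requires $\widetilde{A}$ to be Cohen--Macaulay.
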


\begin{proof}
The sequence $a_1,a_2,\dots ,a_d$ is a standard system of parameters for $\widetilde{A}$ by Corollary \ref{independent}, and hence $[\H_\fkM^d(\R(\q\widetilde{A}))]_d=(0)$ by \cite[Theorem 6.2]{T}. Then we obtain from Lemma \ref{canonical filtration} that $\G(\w)_{d-1}\cong[\K_{\G(\q\widetilde{A})}]_d$ as an $A$-module, and hence $\widetilde{A}/\w_{d}\cong[\K_{\G(\q\widetilde{A})}]_d$ as an $A$-module (recall that $\w_{d-1}=\widetilde{A}$). Thanks to Lemma \ref{top}, we have $[\H_\fkM^d(\G(\q\widetilde{A}))]_{-d}\cong \widetilde{A}/\fka$ as an $A$-module, and therefore $\widetilde{A}/\w_d\cong[\widetilde{A}/\fka]^\vee$ as an $A$-module because $\K_{\G(\q\widetilde{A})}\cong [\H_\fkM^d(\G(\q\widetilde{A}))]^\vee$ as a graded $R$-module. Then since $\fka\cdot\widetilde{A}/\w_d=(0)$, we have $\w_d\supseteq\fka$, and furthermore we see $\w_d=\fka$, as $\ell_A(\widetilde{A}/\w_d)=\ell_A(\widetilde{A}/\fka)<\infty$. Thus we obtain the required equalities from Corollary  \ref{concentrated}.
\end{proof}

We set $\fkc=(0):_{A}\H^1_\m(A)$. Recall that $\fkc=A:\widetilde{A}$ and that the inclusion $\fka\subseteq \fkc$ holds. If the ring $\R(\q^d)$ is Gorenstein, then the equality $\fka=\fkc$ follows from the next result.

\begin{prop}\label{necessary3}
If $\mu_A(\k_d)=1$, then $[\widetilde{A}/A]^\vee\cong A/\fka$ as an $A$-module and hence $\fka=\fkc$.
\end{prop}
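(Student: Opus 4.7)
The plan is to compute the quotient $\k_d/\w_d$ in two independent ways and compare the results. On the one hand, the hypothesis $\mu_A(\k_d)=1$ lets me write $\k_d = Ay$ for some $y \in \k_d$, and the argument inside Lemma \ref{kd1} (namely Claim \ref{y}) yields $(0):_{\widetilde{A}}y = (0)$ together with $X = \widetilde{A}y$. Thus the assignment $1 \mapsto y$ gives an isomorphism of $\widetilde{A}$-modules $\widetilde{A} \xrightarrow{\sim} X$ under which $A \subseteq \widetilde{A}$ is carried onto $\k_d \subseteq X$. After this identification the canonical $\q$-filtration $\w$ of $X$ with respect to $\widetilde{A}$ becomes a canonical $\q$-filtration of $\widetilde{A}$ with respect to $\widetilde{A}$, and Lemma \ref{w} then forces $\w_d = \fka$. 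Consequently $\k_d/\w_d \cong A/\fka$ as an $A$-module.

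On the other hand, Lemma \ref{kw} asserts $\k_d/\w_d \cong [\widetilde{A}/A]^\vee$ with no additional hypothesis. Comparing the two descriptions yields the first claim $[\widetilde{A}/A]^\vee \cong A/\fka$.

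For the equality $\fka = \fkc$, I would take $A$-annihilators on both sides of this isomorphism. The annihilator of $A/\fka$ is $\fka$ itself, and Matlis duality preserves annihilators (multiplication by $a \in A$ on a module is zero iff it is zero on its dual, since the functor is exact and faithful). Hence
$$\fka = (0):_A (A/\fka) = (0):_A [\widetilde{A}/A]^\vee = (0):_A (\widetilde{A}/A) = (0):_A \H^1_\m(A) = \fkc,$$
where the penultimate equality uses the isomorphism $\widetilde{A}/A \cong \H^1_\m(A)$ of $A$-modules established in Section 2. The only delicate point in this strategy is aligning $\k_d$ with $A$ inside $\widetilde{A}$; once that choice of isomorphism is fixed, the rest is a clean assembly of Lemmas \ref{kd1}, \ref{kw}, and \ref{w}, and the a priori inclusion $\fka \subseteq \fkc$ noted just before the proposition serves as a consistency check on the annihilator computation.
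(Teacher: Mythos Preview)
Your proof is correct and follows essentially the same route as the paper: both compute $\k_d/\w_d$ in two ways by combining Lemma~\ref{kw} with Lemma~\ref{w} (after invoking Claim~\ref{y} inside Lemma~\ref{kd1}) and then take annihilators to deduce $\fka=\fkc$. The only difference is organizational---the paper first fixes $X=\widetilde{A}$ via an arbitrary isomorphism, applies Lemma~\ref{w} to get $\w_d=\fka$, and then must argue that the generator $y$ of $\k_d$ is a unit of $\widetilde{A}$ (using $d\ge 2$) so that $\fka y=\fka$ and $\k_d/\w_d=Ay/\fka y\cong A/\fka$; you instead use $y$ itself to define the identification $\widetilde{A}\xrightarrow{\sim}X$, which aligns $\k_d$ with $A$ from the outset and sidesteps the unit argument entirely.
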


\begin{proof}
We may assume that $X=\widetilde{A}$ by Lemma \ref{kd1}. Then $\w_d=\fka$ by Lemma \ref{w}. We write $\k_d=Ay$ for some $y\in\k_d$. The inclusion $\fka\subseteq Ay$ follows from $\w_d\subseteq\k_d$. Then $\fka\subseteq \widetilde{A}y$, so that we see $y$ is a unit of the ring $\widetilde{A}$, as $d\ge 2$. Therefore, $\fka=\fka y$ and hence $\k_d/\w_d=Ay/\fka y$. Since $Ay/\fka y\cong A/\fka$ as an $A$-module, we obtain from Lemma \ref{kw} that $[\widetilde{A}/A]^\vee\cong A/\fka$ as an $A$-module.
Since $(0):_A[\widetilde{A}/A]^\vee=(0):_A \widetilde{A}/A$ and $\widetilde{A}/A\cong\H^1_\m(A)$, we have $\fka=\fkc$.
\end{proof}

\begin{thm}\label{n=d}
The ring $\R(\q^n)$ is not Gorenstein for any positive integer $n\neq d$ whenever $\depth A=1$ and $\q$ is a standard parameter ideal of $A$.
\end{thm}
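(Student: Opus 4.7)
The plan is to argue by contradiction. Suppose $\R(\q^n)$ is Gorenstein for some positive integer $n\ne d$. The uniform first step extracts the structural consequence: since $R^{(n)}=\R(\q^n)$ is Gorenstein of dimension $d+1$, the Veronese/graded-local-duality comparison (parallel to what appears in Lemma \ref{necessary1} and Lemma \ref{iff}) gives $\K_{R^{(n)}}\cong [\K_R]^{(n)}\cong R^{(n)}(-1)$ as graded $R^{(n)}$-modules, the shift being determined by $\mathrm{a}(R^{(n)})=-1$ (itself inherited from $\mathrm{a}(R)=-1$ via the Veronese compatibility $[\H^{d+1}_\fkM(R)]^{(n)}\cong\H^{d+1}_{\fkM^{(n)}}(R^{(n)})$). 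Reading off the degree-$1$ component gives the clean consequence $\k_n\cong A$ as $A$-modules, where $\k=\{\k_n\}$ is the canonical $\q$-filtration of $X$ with respect to $A$.

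I would then split into two cases. For $1\le n\le d-1$, Corollary \ref{concentrated} gives $\k_n=X\cong \K_A$ as $A$-modules. Since $A$ is unmixed of dimension $d\ge 2$, the canonical module $\K_A$ satisfies Serre's condition $(S_2)$, so $\depth_A\K_A\ge 2$. But then $\K_A\cong A$ forces $\depth A\ge 2$, contradicting $\depth A=1$.

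For $n\ge d+1$ the substantive work is a descending implication: $\mu_A(\k_m)=1$ forces $\mu_A(\k_{m-1})=1$ for every $m\ge d+1$. Writing $\k_m=Ag$ with $\Ann_A g=0$ and using $\k_m=\q\k_{m-1}$ (from Corollary \ref{concentrated}), I would take minimal generators $z_1,\dots,z_\mu$ of $\k_{m-1}$, produce the unique coefficients $b_{ik}\in A$ with $a_iz_k=b_{ik}g$, together with an expansion $g=\sum c_{ik}a_iz_k$. Substituting and cancelling $g$ (possible because $\Ann_A g=0$) collapses to $\sum c_{ik}b_{ik}=1$, so some $b_{i_0 k_0}$ is a unit of $A$. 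Then $a_{i_0}(z_k-b_{i_0 k}b_{i_0 k_0}^{-1}z_{k_0})=0$ inside $\k_{m-1}\subseteq X$, and the unmixedness of $A$ (which gives $\Ass_A\K_A=\Ass_A A$) makes $a_{i_0}$ a non-zerodivisor on $X\cong\K_A$; hence $z_k\in Az_{k_0}$ for every $k$ and $\k_{m-1}$ is cyclic. Iterating $n-d$ times delivers $\mu_A(\k_d)=1$.

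Lemma \ref{kd1} then provides $\k_d=Ay$ with $(0):_{\widetilde{A}}y=0$, so $\k_n=\q^{n-d}\k_d=\q^{n-d}y$ and multiplication by $y$ furnishes an $A$-module isomorphism $\q^{n-d}\cong\k_n\cong A$. Thus $\q^{n-d}$ is a principal ideal of $A$ generated by a non-zerodivisor; but $\height\q^{n-d}=\height\q=d\ge 2$ contradicts Krull's principal ideal theorem. I expect the main obstacle to be the descending step: pinning down the unit $b_{i_0 k_0}$ through the coefficient bookkeeping, and invoking the non-zerodivisor property of $a_{i_0}$ on $X\cong\K_A$ via unmixedness of $A$. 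Once those are in place the final contradiction via Krull's theorem is immediate.
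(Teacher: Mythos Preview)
Your argument is correct, and for $n<d$ it coincides with the paper's: both observe $\k_n=X\cong\K_A$ and derive a contradiction with $\depth A=1$.

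For $n>d$ you take a genuinely different route. The paper invokes Lemma~\ref{kw}, which identifies $\R(\k)/\R(\w)$ with $\P\otimes_C[\widetilde{A}/A]^\vee(-d)$ as graded $\P$-modules; reading off degree $n$ then yields a surjection $A\twoheadrightarrow([\widetilde{A}/A]^\vee)^{\binom{n-1}{d-1}}$, which is impossible since $\widetilde{A}/A\neq 0$ (from $\depth A=1$) and $\binom{n-1}{d-1}>1$. Your descent $\mu_A(\k_m)=1\Rightarrow\mu_A(\k_{m-1})=1$ is more hands-on and avoids the comparison with the canonical filtration of $\widetilde{A}$ altogether, trading Lemma~\ref{kw} for the coefficient bookkeeping and the final appeal to Krull's principal ideal theorem via $\q^{n-d}\cong A$. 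One small point worth making explicit: in the inductive step you need $\Ann_A z_{k_0}=0$ to continue, and this follows immediately from $a_{i_0}z_{k_0}=b_{i_0 k_0}g$ with $b_{i_0 k_0}$ a unit and $\Ann_A g=0$. The paper's path is shorter because Lemma~\ref{kw} is already in hand for other purposes; yours is self-contained and does not require constructing the second filtration $\w$.
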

\begin{proof}
Suppose that the ring $\R(\q^n)$ is Gorenstein for some integer $n>0$. Since $\K_{\R(\q^n)}\cong\R(\k)^{(n)}$ as a graded $\R(\q^n)$-module, we have $$\R(\q^n)(-1)\cong\R(\k)^{(n)}$$ as a graded $\R(\q^n)$-module. Hence, $\k_n\cong A$ as an $A$-module.  

Let $n<d$. Then $\k_n=X$ by Lemma \ref{ll}, and hence $A\cong\K_A$ as an $A$-module (recall that $X\cong \K_A$ as an $A$-module). This is contradiction to the equality $\depth A=1$. 

Let $n>d$. From Lemma \ref{kw} we obtain that $$\R(\k)/\R(\w)\cong \P\otimes_C[\widetilde{A}/A]^\vee(-d)$$ as a graded $\P$-module. Looking at the $n^{\th}$ degree of homogeneous component of it, we get a surjection $A\to ([\widetilde{A}/A]^\vee)^\mu $ of $A$-modules, where $\mu =\binom{n-1}{d-1}$. Since $\depth A=1$, we have $\widetilde{A}/A\neq (0)$ by Proposition \ref{structure}. This is contradiction to the inequality $\mu >1$ (recall that we assume $d\ge 2$).
\end{proof}

%%%%%%%%%%%%%%%%%%%%%%%%%%%%%%%%%%%%%%%%%
%%%%%%%%%%%%%%%%%%%%%%%%%%%%%%%%%%%%%%%%%
\section{Proof of the main theorem}
%%%%%%%%%%%%%%%%%%%%%%%%%%%%%%%%%%%%%%%%%
%%%%%%%%%%%%%%%%%%%%%%%%%%%%%%%%%%%%%%%%%

We are now ready to prove Theorem \ref{main} and Corollary \ref{Bbm}. Let $\fkq$ be a parameter ideal of $A$ generated by elements $a_1,a_2,\dots ,a_d\in A$. We put $\fka =\widetilde{\Sigma}(a_1,a_2,\dots ,a_d; A)$ and $\mathfrak{c}=(0):_A \H^1_\m(A)$.

\begin{proof}[Proof of Theorem \ref{main}]
We may assume the ring $A$ is complete. Since $\H^i_\m(A)=(0)$ for $i\neq 1, d$, and $\ell_A (\H^1_\m(A))<\infty$, we may assume the sequence $a_1,a_2,\dots , a_d$ forms a standard system of parameters for $A$, as $\q\H_\m^1(A)=(0)$ (see Lemma \ref{standard} and \cite[Corollary 3.7]{T}). We have $A$ is unmixed and $\widetilde{A}$ is Cohen-Macaulay (see, e.g., \cite[Proposition 3]{SV} and Corollary \ref{1980}). Then the sequence $a_1,a_2,\dots , a_d$ forms a regular sequence on $\widetilde{A}$, so that $\fka=\q\widetilde{A}$ by Corollary  \ref{independent}.

The implication (1)$\Rightarrow $(2) follows from Proposition \ref{necessary2} and Proposition  \ref{necessary3}. The equivalence (2)$\Leftrightarrow$(3) is clear by  Proposition \ref{proj}. The last assertion follows from Lemma \ref{r=1} and Theorem \ref{gor}. We shall show the implication (2)$\Rightarrow $(1). We will use induction on $d$. Put $$A'=A/a_1\widetilde{A}.$$ Recall that the sequence $a_2,a_3,\dots ,a_d$ forms a standard system of parameters for $A'$.

Let $d=2$. Then $A'$ is an $1$-dimensional Gorenstein local ring by Proposition \ref{proj}, and hence $$[\K_{\R(\q A')}]_{1}\cong A'$$ because $\R(\q A')$ is the polynomial ring over $A'$ in one variable. Notice that $\K_A\cong\widetilde{A}$ as an $A$-module (see Theorem \ref{gor}). Let $\k=\{\k_n\}_{n\in\Z}$ be a canonical $\q$-filtration of $\widetilde{A}$ with respect to $A$ (see Proposition \ref{full canonical filtration}). We have $$[\K_{\R(\q A')}]_{1}\cong \k_{2}/a_1\widetilde{A}$$ as an $A$-module by Lemma \ref{ind}, and hence we get an isomorphism
$$\eta:A'\xrightarrow{\sim} \k_{2}/a_1\widetilde{A}$$ of $A$-modules. Write $\eta(1)=\overline{y}$ for some $y\in\k_2$. Then $$\k_2=Ay+a_1\widetilde{A}.$$ We want to show $\k_2=Ay$.
The inclusion $\eta(\fka/a_1\widetilde{A})\subseteq\fka/a_1\widetilde{A}$ holds because $\fka y\subseteq \fka$ (recall that $y\in\widetilde{A})$. We consider the commutative and exact diagram
$$
\begin{CD}
0@>>> \fka/a_1\widetilde{A}@>>> \k_2/a_1\widetilde{A}@>>> [\widetilde{A}/A]^\vee@>>> 0\\
@.@AA\eta_1 A@A\wr A\eta A@AA\eta_2A\\
0@>>> \fka/a_1\widetilde{A}@>>> A/a_1\widetilde{A}@>>> A/\fka@>>> 0
\end{CD}
$$
of $A$-modules, where the top exact sequence is induced by
the isomorphism $$\k_d/\w_d\xrightarrow{\sim} [\widetilde{A}/A]^\vee$$ in Lemma \ref{kw} (recall that $\w_d=\fka$ by Lemma \ref{w}), the bottom is the natural one, and $\eta_1$ is induced by the inclusion $\eta(\fka/a_1\widetilde{A})\subseteq\fka/a_1\widetilde{A}$. The equality $$\ell_A (A/\fka) = \ell_A (\widetilde{A}/A)$$ follows from Lemma \ref{r=1}, as $\fkc =\fka$. Then the surjection $\eta_2$ must be bijective and hence so is $\eta_1$. Then $\eta(\fka/a_1\widetilde{A})=\fka/a_1\widetilde{A}$ and therefore $$\fka=\fka y+a_1\widetilde{A}.$$ 

On the other hand, we have $\fka=(a_1, a_2)\widetilde{A}$. Hence, $$(a_1, a_2)\widetilde{A}=(a_1, a_2y)\widetilde{A},$$ so that we can write $a_2=a_1\alpha_1+a_2y\alpha_2$ for some $\alpha_1, \alpha_2\in \widetilde{A}$. Since $a_1,a_2$ is a regular sequence on $\widetilde{A}$, we have $1-y\alpha_2\in a_1\widetilde{A}$. This means the element $y$ must be a unit of $\widetilde{A}$, as $a_1\widetilde{A}$ is contained in the Jacobson radical of $\widetilde{A}$.
Then $$\k_2=Ay+a_1\widetilde{A}=Ay+a_1\widetilde{A}y=Ay,$$ as $a_1\widetilde{A}\subseteq A$. Therefore, $\k_2=Ay$, and thus $\R(\fkq^2)$ is a Gorenstein ring by Lemma \ref{iff}.

Let $d\ge 3$. It follows that $\H^i_\m(A')=(0)$ for $i\neq 1,d$ and $0<\ell_{A'}(\H^1_\m(A'))<\infty$ from applying the local cohomology functor $\H_\m^i(\ast )$ to the natural exact sequence
$$
0\to A'\to\widetilde{A}/a_1\widetilde{A}\to\widetilde{A}/A\to 0
$$
of $A$-modules (recall that $\widetilde{A}$ is a Cohen-Macaulay $A$-module and $\widetilde{A}/A\cong\H_\m^1(A)$ as an $A$-module). Then we have $$\H^1_\m(A)\cong\H^1_\m(A')$$ as an $A$-module, and therefore $\mathrm{r}_{A'}(\H^1_\m(A'))=\mathrm{r}_A(\H^1_\m(A))=1$. Put $\fkc'=(0):_{A'}\H_\m^1(A')$. Then $\fkc'=\fkc A'=\fka A'$, as $\fkc=\fka$. Corollary \ref{independent} implies
$$
\fka A'=\widetilde{\Sigma}(a_1,a_2,\dots ,a_d; \widetilde{A})/a_1\widetilde{A}=\widetilde{\Sigma}(a_2,a_3,\dots ,a_d; \widetilde{A}/a_1\widetilde{A})
.$$ 
The equality $\widetilde{A}/a_1\widetilde{A}=\widetilde{A'~}$ follows from Corollary \ref{minimality}, so that $$\widetilde{\Sigma}(a_2,a_3,\dots ,a_d; \widetilde{A}/a_1\widetilde{A})=\widetilde{\Sigma}(a_2,a_3,\dots ,a_d; A')$$ by Corollary \ref{independent}. Thus the equality $$\fkc'=\widetilde{\Sigma}(a_2,a_3,\dots ,a_d; A')$$ holds. By the inductive hypothesis, the ring $\R(\fkq^{d-1}A')$ is Gorenstein, and therefore so is the ring $\R(\fkq^{d})$ by Lemma \ref{ind3}.
\end{proof}

\

\begin{proof}[Proof of Corollary \ref{Bbm}]
We may assume that $A$ is a complete Buchsbaum local ring of depth one. 
Put 
$
\fkb=(a_1,a_2,\dots,a_{d-1})A:_Aa_d+a_dA.
$
Then $\e_\q(A)=\ell_A (A/\fkb)$ by \cite[Proposition (3.7)]{G83}.
Hence, the natural exact sequence
$$0\to \fka/\fkb\to A/\fkb\to A/\fka\to 0$$
of $A$-modules implies the equality $\ell_A (\fka/\fkb)=\e_\q(A)-\ell_A (A/\fka)$, and then we obtain 
$$
\ell_A (\fka/\fkb)
=
\displaystyle\sum_{i=1}^{d-1}
\begin{pmatrix}
d-1\\ i-1
\end{pmatrix}
\ell_A (\H^i_\m(A))
$$
from \cite[Proposition (3.6)]{G83A} (recall that
$
\ell_A (A/\q)-\e_\q (A)=\sum_{i=1}^{d-1}
\binom{d-1}{i}\ell_A (\H^i_\m(A))$). We note $\fka/\fkb\neq (0)$, as $\depth A=1$ and $d\ge 2$.

Suppose $\e_\m (A)=2$ and $\q$ is a reduction of $\m$. Since $$\e_\m(A)=\e_\q(A)=\ell_A (A/\fka)+\ell_A (\fka/\fkb),$$ we have $\ell_A (A/\fka)=1$ and $\ell_A (\fka/\fkb)=1$, so that $\fka=\m$ and $\ell_A (\H^1_\m(A))=1$. Hence, $\fka=\fkc$ and $\mathrm{r}_A(\H^1_\m(A))=1$. Therefore, $\R(\fkq^d)$ is a Gorenstein ring by Theorem \ref{main}.

Conversely, suppose $\R(\fkq^d)$ is a Gorenstein ring. Then $A/\fkb$ is a Gorenstein local ring by \cite[Lemma (3.6)]{G83}. Since the ring $A$ is Buchsbaum, the inclusion $\fkm\fka\subseteq \fkb$ holds, so that $(0):_{\fka/\fkb}\m =\fka/\fkb$. Hence, $\ell_A (\fka/\fkb)=1$ because $\mathrm{r}_A(A/\fkb)=1$, so that $\ell_A (\H^1_\m(A))=1$ and $\H^i_\m(A)=(0)$ if $i\neq 1,d$. Then $\m=\fkc$, and hence $\m=\fka$ (note that $\fka=\fkc$ by Theorem \ref{main}). Hence, we see $\q$ is a reduction of $\m$, as $\fka=\q\widetilde{A}$. Since $\e_\q(A)=\ell_A (A/\fka)+\ell_A (\fka/\fkb)$, we get $\e_\m(A)=2$, as $\e_\m(A)=\e_\q(A)$. 
\end{proof}

Concluding this paper, let us give the following example to illustrate Theorem \ref{main}.

\begin{ex}\label{ex}
Let $(B,\n)$ be a $d$-dimensional Gorenstein local ring and $Q$ a parameter ideal of $B$. 
Set $A=B\ltimes Q$ that is the idealization of $Q$ over $B$ and put $\q=QA$ that is a parameter ideal of $A$. Then $\R(\q^d)$ is a Gorenstein ring whenever $d\ge 2$.
\end{ex}

\begin{proof}
Let $d\ge 2$ and put $\m=\n\times Q$. We set $X=B\ltimes B$  that is the idealization of $B$ over $B$. Then $X$ is an $A$-algebra that is finite as an $A$-module. Look at the natural exact sequence $$0\to A \to X\to X/A\to 0$$ of $A$-modules, and we get $\H^i_\m(A)=(0)$ for $i\neq 1,d$, and $\ell_A(\H^1_\m(A))<\infty$ because $\depth_AX=d$. Then $X=\widetilde{A}$ by Proposition \ref{structure}. Notice that $\widetilde{A}/A=X/A\cong B/Q$ as an $A$-module. Since $\H^1_\m(A)\cong \widetilde{A}/A$ as an $A$-module, we see $\depth A=1$ and $\mathrm{r}_A(\H^1_\m(A))=\mathrm{r}_{A}(B/Q)=\mathrm{r}_{B}(B/Q)=1$ because $B$ is a Gorenstein local ring and $B/\n\cong A/\m$ as a ring. Therefore, $\mathrm{r}_A(A)=1$. Put $\fkc=(0):_A\H^1_\m(A)$. Then $$\fkc=Q\times Q=Q\widetilde{A},$$ so that we see $\q$ is a reduction of $\fkc$. We have $$\e_\fkc (A)=\e_\q (A)=\e_\q (\widetilde{A})=\ell_A(\widetilde{A}/Q\widetilde{A})=\ell_A(\widetilde{A}/\fkc).$$ The equality $\ell_A (\widetilde{A}/\fkc)=2\ell_A (A/\fkc)$ follows from Lemma \ref{r=1}, so that $\e_\fkc (A)=2\ell_A(A/\fkc)$. Thus $\R(\q^d)$ is a Gorenstein ring by Theorem \ref{main}.
\end{proof}

\end{document}